\documentclass[abstract=true]{scrartcl}

\usepackage[english]{babel}
\usepackage[utf8]{inputenc}
\usepackage{mathtools}
\usepackage{mathpazo}
\usepackage{amsmath,amssymb,amsthm}
\usepackage{graphicx}
\usepackage{subcaption}
\usepackage[margin=1in]{geometry}
\usepackage[hyphens]{url}
\usepackage[hidelinks]{hyperref}
\usepackage{cleveref}
\usepackage{mathrsfs}
\usepackage[normalem]{ulem}
\usepackage{todonotes}
\usepackage{comment}
\usepackage[most]{tcolorbox}
\usepackage{algorithm}

\usepackage{tikz}
\usetikzlibrary{arrows.meta,calc}
\definecolor{newblue}{RGB}{52,101,164}
\definecolor{neworange}{RGB}{230,97,1}
\definecolor{oldgray}{RGB}{130,130,130}

\usepackage{enumitem}

\usepackage{aligned-overset, nicefrac, empheq}
\usepackage{thm-restate, amsfonts, xspace}
\usepackage{bm}
\usepackage{booktabs, multirow}

\newtheorem{theorem}{Theorem}[section]
\newtheorem{lemma}[theorem]{Lemma}
\newtheorem{definition}[theorem]{Definition}

\newtheorem{example}{Example}[section]

\newtheorem{proposition}[theorem]{Proposition}
\newtheorem{corollary}[theorem]{Corollary}
\newtheorem{observation}[theorem]{Observation}

\crefdefaultlabelformat{#2\textup{#1}#3}
\Crefname{assumption}{Assumption}{Assumptions}
\Crefrangeformat{assumption}{Assumptions #3#1#4--#5#2#6}

\def\defi{\vcentcolon=}

\newcommand{\ignore}[1]{}

\title{Optimal Spectral Design with Prior Information\footnote{%
\textbf{Funding:}
This material is based upon work supported by the National Science Foundation under
Awards No.\ DMS-2410944, AF-2504994, 2106444, CMMI-2246414,
and the Office of Naval Research N00014-24-1-2066.
}}
\author{Anton J.\ Kleywegt\thanks{H.\ Milton Stewart School of Industrial and Systems Engineering, Georgia Institute of Technology, Atlanta, Georgia 30332 (\texttt{anton@isye.gatech.edu}, \texttt{johannes.milz@isye.gatech.edu}, \texttt{mohit.singh@isye.gatech.edu}, \texttt{wxie@gatech.edu}).}
\and Johannes Milz\footnotemark[2]
\and Mohit Singh\footnotemark[2]
\and Weijun Xie\footnotemark[2]}
\date{\today}
\allowdisplaybreaks
\begin{document}

\maketitle

\begin{abstract}
We study a class of spectral design problems in which a prior positive semidefinite information matrix is updated by a sum of rank-one matrices constructed from chosen design vectors subject to a bound on their Euclidean norm. The objective of a spectral design problem is any symmetric convex function of the eigenvalues of the updated information matrix. This framework unifies classical optimal experimental design criteria, including A-, D-, and E-optimality.
It also arises in model-based derivative-free optimization, where sampling directions determine the conditioning and accuracy of regression models.
Although the objective is symmetric and convex in the eigenvalues, the optimization problem with design vectors/matrix as decision variables is nonconvex, and optimal solutions of their convex relaxations may not be feasible for the spectral design problem.
We use tight eigenvalue relaxations to obtain a convex reformulation, and we apply the Schur--Horn theorem to construct a simple polynomial-time algorithm for solving the spectral design problem.
We illustrate the optimal spectral designs computed by our algorithm.
Moreover, a small set of numerical experiments shows the potential of spectral designs for derivative-free optimization.
\end{abstract}

{\small 
\noindent\textbf{Key words.}
spectral design, optimal experimental design, information matrix,
Schur--Horn theorem, derivative-free optimization.

\noindent\textbf{AMS subject classifications.}
Primary: 62K05. Secondary: 90C25, 90C56, 15A18, 65K05.
}

\section{Introduction}
\label{sec:introduction}

Experimental design is the selection of conditions under which experiments are conducted. We consider experimental design in a setting where an experimenter can control $d$~real-valued factors, and $k$~combinations of values for the $d$~factors are to be chosen at which experimental observations will be obtained. That is, the experimental designer must select design vectors $\bm{x}^{1}, \ldots, \bm{x}^{k} \in \mathbb{R}^{d}$. The chosen design vectors must be close to a given central point, but there are no other restrictions on the allowable combinations of factor values for the design vectors.
Specifically, we require that $\bm{x}^{1},\ldots,\bm{x}^{k} \in B(\bm{0},1) \defi \left\{\bm{x} \in \mathbb{R}^{d} \, : \, \bm{x}^{\top} \bm{x} \le 1\right\}$.
The requirements that the central point is $\bm{0}$ and that the maximum distance between a chosen design vector and the central point is $1$ are without loss of generality, but the use of Euclidean distance is restrictive.

The best design depends on the purposes of the experiment.
Next we review some widely used criteria and associated optimization problems for the selection of a design.
The first criterion, called A-optimal design, chooses a design that solves
\[
\min\left\{\operatorname{Tr}\left(\left(\sum_{i=1}^{k} \bm{x}^{i} \left(\bm{x}^{i}\right)^{\top}\right)^{-1}\right) \; : \; \bm{x}^{1},\ldots,\bm{x}^{k} \in B(\bm{0},1)\right\}.
\]
where $\operatorname{Tr}(\bm{M})$ denotes the trace of matrix $\bm{M} \in \mathbb{R}^{d \times d}$.
Let $\bm{X} = [\bm{x}^{1},\ldots,\bm{x}^{k}] \in \mathbb{R}^{d \times k}$ denote the design matrix; we call $\bm{X} \bm{X}^{\top} \in \mathbb{R}^{d \times d}$ the information matrix.
For any $\bm{M} \in \mathbb{R}^{d \times d}$, let $\bm{\lambda}(\bm{M}) \defi (\lambda_{1}(\bm{M}),\ldots,\lambda_{d}(\bm{M}))$ denote the eigenvalues of $\bm{M}$.
We will assume that the eigenvalues of a symmetric matrix $\bm{M} \in \mathbb{R}^{d \times d}$ are sorted in non-decreasing order, that is, $\lambda_{1}(\bm{M}) \le \cdots \le \lambda_{d}(\bm{M})$.
Then the optimization problem for A-optimal design can be written as
\[
\min\left\{\sum_{i=1}^{d} \lambda_{i}\left(\bm{X} \bm{X}^{\top}\right)^{-1} \; : \; \bm{x}^{1},\ldots,\bm{x}^{k} \in B(\bm{0},1)\right\}.
\]
Note that $f(\lambda_{1},\ldots,\lambda_{d}) = \sum_{i=1}^{d} \lambda_{i}^{-1}$ is a symmetric convex function on $(0,\infty)^{d}$.

The second criterion, called D-optimal design, chooses a design that solves
\begin{align*}
& \min\left\{-\log\left(\det\left(\bm{X} \bm{X}^{\top}\right)\right) \; : \; \bm{x}^{1},\ldots,\bm{x}^{k} \in B(\bm{0},1)\right\} \\
= \ \ & \min\left\{- \sum_{i=1}^{d} \log\left(\lambda_{i}\left(\bm{X} \bm{X}^{\top}\right)\right) \; : \; \bm{x}^{1},\ldots,\bm{x}^{k} \in B(\bm{0},1)\right\},
\end{align*}
where $\det(\bm{M})$ denotes the determinant of matrix $\bm{M} \in \mathbb{R}^{d \times d}$.
Note that $f(\lambda_{1},\ldots,\lambda_{d}) = - \sum_{i=1}^{d} \log\left(\lambda_{i}\right)$ is also a symmetric convex function on $(0,\infty)^{d}$.

Another criterion, called E-optimal design, chooses a design that solves
\begin{align*}
& \min\left\{\max\left\{\lambda_{1}\left(\bm{X} \bm{X}^{\top}\right)^{-1},\ldots,\lambda_{d}\left(\bm{X} \bm{X}^{\top}\right)^{-1}\right\} \; : \; \bm{x}^{1},\ldots,\bm{x}^{k} \in B(\bm{0},1)\right\}.
\end{align*}
Note that $f(\lambda_{1},\ldots,\lambda_{d}) = \max\left\{1/\lambda_{1},\ldots,1/\lambda_{d}\right\}$ is also a symmetric convex function on $(0,\infty)^{d}$.
Then the optimization problem for E-optimal design can be written as $$\min\left\{\left(\lambda_{1}\left(\bm{X} \bm{X}^{\top}\right)\right)^{-1} \, : \, \bm{x}^{1},\ldots,\bm{x}^{k} \in B(\bm{0},1)\right\}.$$
For more background about the optimal design of experiments, see \cite{pukelsheim2006optimal}.

In many experimental design settings, including the setting discussed in \Cref{sec:motivation}, an experiment is designed to obtain additional observations after initial observations have already been obtained at design vectors $\bm{u}^{1},\ldots,\bm{u}^{q} \in \mathbb{R}^{d}$.
Thus the overall design will consist of design vectors $\bm{u}^{1},\ldots,\bm{u}^{q},\bm{x}^{1},\ldots,\bm{x}^{k}$, and the overall information matrix will be $\bm{U} \bm{U}^{\top} + \bm{X} \bm{X}^{\top}$, where $\bm{U} = [\bm{u}^{1},\ldots,\bm{u}^{q}] \in \mathbb{R}^{d \times q}$. For notational convenience, let $\bm{A} = \bm{U} \bm{U}^{\top}$.

\subsection{Spectral Design Problem}
\label{subsect:spectral-design-problem}

In this paper, we study the following spectral design problem, which includes all the examples above as special cases.
Let $\mathbb{S}_{d}^+ \subset \mathbb{R}^{d \times d}$ denote the cone of $d \times d$ positive semidefinite matrices. 
Let $f : \mathbb{R}^{d} \to \mathbb{R} \cup \{\infty\}$ be any symmetric convex function, and let the associated spectral function $F : \mathbb{S}_{d}^+ \to \mathbb{R} \cup \{\infty\}$ be given by $F(\bm{M}) \defi f\big(\bm{\lambda}(\bm{M})\big)$.
Then $F$ is unitarily invariant and inherits convexity from $f$ \cite{Lewis1996}.
Our spectral design problem is
\begin{equation}
\label{eq:SpectralDesign}
\min\left\{F\big(\bm{A} + \bm{X} \bm{X}^{\top}\big) \; : \; \bm{x}^{1},\ldots,\bm{x}^{k} \in B(\bm{0},1)\right\}.
\tag{Spectral-Design}
\end{equation}
Although $F : \mathbb{S}_{d}^+ \to \mathbb{R} \cup \{\infty\}$ is convex, the next example shows that $G(\bm{X}) = F\big(\bm{A} + \bm{X} \bm{X}^{\top}\big)$ may be nonconvex.
\begin{example}
\label{ex:Spectral Design nonconvex}
Let $d = 1$, $k = 1$, $A = 0$, and $f(\lambda) = - \lambda$.
Then $G(x) = F(x^{2}) = f(\bm{\lambda}(x^{2})) = f(x^{2}) = - x^{2}$, which is not convex.
Thus \eqref{eq:SpectralDesign} is not a convex optimization problem.
\end{example}

Motivated by the observation that $F : \mathbb{S}_{d}^+ \to \mathbb{R} \cup \{\infty\}$ is convex, consider the following \emph{equivalent} formulation:
\begin{equation}
\label{eq:SpectralDesignM}
\min\left\{F\big(\bm{A} + \bm{M}\big) \; : \; \bm{M} = \sum_{i=1}^{k} \bm{x}^{i} \left(\bm{x}^{i}\right)^{\top}, \; \bm{x}^{1},\ldots,\bm{x}^{k} \in B(\bm{0},1)\right\}.
\tag{Matrix-Spectral-Design}
\end{equation}
The objective of problem~\eqref{eq:SpectralDesignM} is convex.
\begin{example}
\label{ex:Matrix Spectral Design convex with k >= d}
Consider \Cref{ex:Spectral Design nonconvex}.
Note that the feasible set of problem~\eqref{eq:SpectralDesignM} is $\left\{M = x^{2} \, : \, x \in [-1,1]\right\} = [0,1]$, which is convex.
\end{example}
However, as shown in the next example, the feasible set of problem~\eqref{eq:SpectralDesignM} is not convex in general.
\begin{example}
Let $d = 2$ and $k = 1$.
Then $\bm{M}^{1} = \bm{x}^{1} \left(\bm{x}^{1}\right)^{\top}$, where $\bm{x}^{1} = (1,0)^\top$, as well as $\bm{M}^{2} = \bm{x}^{2} \left(\bm{x}^{2}\right)^{\top}$, where $\bm{x}^{2} = (0,1)^\top$, are in the feasible set of problem~\eqref{eq:SpectralDesignM}.
Note that $\bm{M}^{1} / 2 + \bm{M}^{2} / 2 = \bm{I}_{2} / 2$, where $\bm{I}_{2}$ is the identity matrix in $\mathbb{R}^{2 \times 2}$, and thus the feasible set of problem~\eqref{eq:SpectralDesignM} with $d = 2$ and $k = 1$ is not convex.
\end{example}
A standard approach is to optimize over the convex hull of the feasible set $\Big\{\bm{M} \in \mathbb{R}^{d \times d} \, : \, \bm{M} = \sum_{i=1}^{k} \bm{x}^{i} \left(\bm{x}^{i}\right)^{\top}, \, \bm{x}^{1},\ldots,\bm{x}^{k} \in B(\bm{0},1)\Big\}$.
For any set $S \subset \mathbb{R}^{d \times d}$, let $\operatorname{conv}(S)$ denote the convex hull of $S$, that is, the smallest convex set that contains~$S$.

\begin{observation}
\label{obs:conv_feasible_set_M}
The convex hull of the feasible set of problem~\eqref{eq:SpectralDesignM} satisfies
\[
\operatorname{conv}\left\{\bm{M} = \sum_{i=1}^{k} \bm{x}^{i} \left(\bm{x}^{i}\right)^{\top} \; : \; \bm{x}^{1},\ldots,\bm{x}^{k} \in B(\bm{0},1)\right\}
\ \ = \ \
\Big\{\bm{M} \in \mathbb{S}_{d}^+ \; : \; \operatorname{Tr}(\bm{M}) \le k\Big\}.
\]
\end{observation}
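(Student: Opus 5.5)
We prove the two inclusions separately. Write $\mathcal{F}_k$ for the feasible set of problem~\eqref{eq:SpectralDesignM} and $\mathcal{C}_k \defi \{\bm{M} \in \mathbb{S}_{d}^+ : \operatorname{Tr}(\bm{M}) \le k\}$.

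\emph{Inclusion $\operatorname{conv}(\mathcal{F}_k) \subseteq \mathcal{C}_k$.} The set $\mathcal{C}_k$ is convex, being the intersection of the convex cone $\mathbb{S}_{d}^+$ with a half-space defined by the linear functional $\operatorname{Tr}$. It therefore suffices to show $\mathcal{F}_k \subseteq \mathcal{C}_k$. Take $\bm{M} = \sum_{i=1}^{k} \bm{x}^{i}(\bm{x}^{i})^{\top}$ with $\bm{x}^{1},\ldots,\bm{x}^{k} \in B(\bm{0},1)$. Each summand is positive semidefinite, so $\bm{M} \in \mathbb{S}_{d}^+$. Moreover,
\[
\operatorname{Tr}(\bm{M}) = \sum_{i=1}^{k} (\bm{x}^{i})^{\top}\bm{x}^{i} \le k .
\]

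\emph{Inclusion $\mathcal{C}_k \subseteq \operatorname{conv}(\mathcal{F}_k)$.} Let $\bm{M} \in \mathcal{C}_k$. Take a spectral decomposition $\bm{M} = \sum_{j=1}^{d} \mu_j \bm{v}^{j}(\bm{v}^{j})^{\top}$, where the $\bm{v}^{j}$ are orthonormal, $\mu_j \ge 0$, and $\sum_j \mu_j = \operatorname{Tr}(\bm{M}) \le k$. The idea is to write $\bm{M}$ as a convex combination of points of $\mathcal{F}_k$ of the form $k\,\bm{v}\bm{v}^{\top}$ with $\|\bm{v}\| = 1$, together with $\bm{0}$.

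\begin{itemize}
\item The zero matrix lies in $\mathcal{F}_k$: choose all $\bm{x}^{i} = \bm{0}$.
\item For every $j$, the matrix $k\,\bm{v}^{j}(\bm{v}^{j})^{\top}$ lies in $\mathcal{F}_k$: choose $\bm{x}^{1} = \cdots = \bm{x}^{k} = \bm{v}^{j}$, which is feasible since $\|\bm{v}^{j}\| = 1$.
\end{itemize}

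Set $t_j \defi \mu_j / k \ge 0$ and $t_0 \defi 1 - \sum_j t_j$. Then $t_0 \ge 0$ precisely because $\operatorname{Tr}(\bm{M}) \le k$, so $(t_0, t_1, \ldots, t_d)$ are convex weights. We obtain
\[
\bm{M} = t_0 \cdot \bm{0} + \sum_{j=1}^{d} t_j \bigl(k\,\bm{v}^{j}(\bm{v}^{j})^{\top}\bigr) \in \operatorname{conv}(\mathcal{F}_k).
\]

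The argument has no real obstacle. The one idea needed is to use the extreme points $k\,\bm{v}\bm{v}^{\top}$, obtained by repeating a single unit vector $k$ times, rather than trying to realize $\bm{M}$ directly as a single element of $\mathcal{F}_k$. A direct realization fails in general, for example when $k < d$ and $\bm{M}$ has full rank.
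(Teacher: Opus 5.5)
Your proof is correct and complete. The paper states this as an observation without proof, so there is no argument to compare against line by line, and yours is the natural verification.

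For the forward inclusion, you use exactly what is needed: each $\bm{x}^i(\bm{x}^i)^\top$ is positive semidefinite with trace $\|\bm{x}^i\|^2 \le 1$, and $\{\bm{M}\in\mathbb{S}_{d}^+ : \operatorname{Tr}(\bm{M})\le k\}$ is convex. For the reverse inclusion, putting weights $\mu_j/k$ on the feasible points $k\,\bm{v}^j(\bm{v}^j)^\top$ and $1-\operatorname{Tr}(\bm{M})/k$ on $\bm{0}$ reproduces $\bm{M}$ exactly.

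The only related machinery in the paper is \Cref{lem:construction}. It uses Schur--Horn to show that the feasible set itself equals $\{\bm{M}\in\mathbb{S}_{d}^+ : \operatorname{Tr}(\bm{M})\le k,\ \operatorname{rank}(\bm{M})\le k\}$. That result is strictly stronger, and getting the observation from it would still require your convex-combination step. Your direct argument therefore avoids Schur--Horn, because repeating one unit vector $k$ times already gives the needed points.

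One cosmetic point: rename your convex weights, since $t_j$ already denotes the eigenvalues of $\bm{A}$ in the paper.
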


Thus, one may consider the following convex relaxation of problem~\eqref{eq:SpectralDesignM}:
\begin{equation}
\label{eq:RelaxedSpectralDesign}
\min\Big\{F\big(\bm{A} + \bm{M}\big) \  : \  \bm{M} \in \mathbb{S}_{d}^+, \  \operatorname{Tr}(\bm{M}) \le k\Big\}.
\tag{Relaxed-Matrix-Spectral-Design}
\end{equation}
However, in spite of problem~\eqref{eq:RelaxedSpectralDesign} being a \emph{tight} convex relaxation of problem~\eqref{eq:SpectralDesignM}, it is not tight enough, in the sense that the optimal objective value of problem~\eqref{eq:RelaxedSpectralDesign} may be strictly less than the optimal objective value of problem~\eqref{eq:SpectralDesignM}, as shown in the next example.

\begin{example}
Let $d = 2$, $k = 1$, $\bm{A} = \bm{I}_{2} / 2$, and $f(\lambda_{1},\ldots,\lambda_{d}) = \max\left\{1/\lambda_{1},\ldots,1/\lambda_{d}\right\}$ as in E-optimal design.
For any $\bm{x} \in B(\bm{0},1)$, the eigenvalues of $\bm{A} + \bm{x} \bm{x}^{\top}$ are $1/2$ and $1/2 + \bm{x}^{\top} \bm{x}$, and the objective value of problem~\eqref{eq:SpectralDesignM} is $F(\bm{A} + \bm{x} \bm{x}^{\top}) = 2$ for all $\bm{x} \in B(\bm{0},1)$.
However, observe that $\bm{M} = \bm{I}_{2} / 2$ is feasible for problem~\eqref{eq:RelaxedSpectralDesign}, with objective value $F(\bm{A} + \bm{M}) = 1$, which cannot be attained in problem~\eqref{eq:SpectralDesignM}.
\end{example}

\subsection{Motivation from Derivative-Free Optimization}
\label{sec:motivation}

Problem \eqref{eq:SpectralDesign} is also motivated by derivative-free optimization (DFO) \cite{Audet2026,conn2009introduction,Larson2019,Rios2013}. 
Consider the unconstrained problem
\begin{align}
\label{eq:unconstrained-problem}
\min_{\bm{y} \in \mathbb{R}^{d}} \, g(\bm{y}).
\end{align}
In DFO, function evaluations are typically subject to errors, for example, due to round-off effects or systematic evaluation inaccuracies. For $\bm{y} \in \mathbb{R}^{d}$, let $\tilde{g}(\bm{y})$ denote the computationally available approximation of $g(\bm{y})$.
Suppose that there exists a finite constant $\varepsilon_{\mathrm{abs}} > 0$ such that
\begin{align}
\label{eq:inexact-function-evaluations}
\left|\left[g(\bm{v}+\bm{w}) - g(\bm{v})\right] - \left[\tilde{g}(\bm{v}+\bm{w}) - \tilde{g}(\bm{v})\right]\right|
\le \varepsilon_{\mathrm{abs}}
\quad \text{for all } \bm{v}, \bm{w} \in \mathbb{R}^{d},
\end{align}
that is, differences are approximated within a tolerance.

In many scientific and engineering applications, a software tool, such as \texttt{EnergyPlus}
\cite{Woods2023}, has been developed that takes $\bm{y}$ (and possibly additional parameters) as input and returns $\tilde{g}(\bm{y})$.
Even if $g$ is differentiable, such tools may not output the gradient $\nabla g(\bm{y})$.
This lack of derivative information may arise for several reasons: implementing gradient computations may be time-consuming; the code structure may prevent the use of automatic differentiation or of the complex step method; or practitioners may wish to avoid possible discrepancies between code that computes $g$ and code that computes $\nabla g$.
Moreover, each evaluation of $\tilde{g}(\bm{y})$ is usually computationally expensive.
A similar situation occurs when $\tilde{g}(\bm{y})$ is obtained from physical experiments or complex simulations.
Examples of such settings include
building energy simulation \cite{Dogan2025},
urban transportation problems
\cite{Osorio2013}, 
groundwater remediation
\cite{Yoon1999}, and 
helicopter rotor design
\cite{Booker1998}.

To address such problems, numerous derivative-free (or zeroth-order) optimization algorithms have been developed.
Many of these methods construct local surrogate models of $g$ using evaluations at selected design vectors.
As function evaluations are expensive, careful selection of these points is crucial.
The new design vectors are of the form $\bm{y} + \delta \bm{x}^{1}, \ldots, \bm{y} + \delta \bm{x}^{k}$, where $\bm{y}$ is the incumbent solution, $\delta > 0$ is the design-region radius, and $\bm{x}^{1}, \ldots, \bm{x}^{k} \in B(\bm{0},1)$ are chosen design directions.
Thus, the current design region for new evaluations is the Euclidean ball of radius $\delta$ centered at $\bm{y}$.
Model-based DFO methods construct, at each iteration, a surrogate model $\hat{g} : \mathbb{R}^{d} \to \mathbb{R}$ using observations at newly selected design vectors, as well as observations at previously chosen design vectors.
It is advantageous to reuse some function evaluations from previous iterations, because evaluations of $\tilde{g}(\bm{y} + \delta \bm{x}^{i})$ are costly.
For example, if function values are reused at points whose distance from $\bm{y}$ is at most $r \delta$, where $r > 0$ is the reuse radius, then the reused points can be written as $\bm{y} + \delta \bm{u}^{j}$, with $\bm{u}^{j} \in B(\bm{0},r) \setminus \{\bm{0}\}$, $j=1,\ldots,q$, where $q$ is the number of reused points.
\Cref{fig:dfo-design} illustrates design vectors in two successive iterations, where the function evaluation at the point $\bm{y}' + \delta' \bm{x}^{\prime 1}$ from the first iteration is reused as a function evaluation at $\bm{y} + \delta \bm{u}^{1}$ in the next iteration.

Under standard polynomial regression models used in DFO (e.g., first- or second-order Taylor approximations), the quality of the surrogate $\hat{g}$ depends critically on the geometry of the reused design directions $\bm{u}^{1},\ldots,\bm{u}^{q}$ and the new design directions $\bm{x}^{1},\ldots,\bm{x}^{k}$.
If $\bm{U} \defi [\bm{u}^{1},\ldots,\bm{u}^{q}]$, then the associated information matrix takes the form $\bm{A} + \sum_{i=1}^{k} \bm{x}^{i} (\bm{x}^{i})^{\top}$, where $\bm{A} \defi \bm{U} \bm{U}^{\top}$ represents the contribution of the previous design vectors to the new design matrix.
The optimal selection of new design vectors can therefore be formulated as a spectral design problem of the form \eqref{eq:SpectralDesign}, where the choice of spectral function $F$ corresponds to the desired optimality criterion (e.g., A-, D-, or E-optimality).

\begin{figure}
\centering
\begin{tikzpicture}[scale=1,>=Latex]

\coordinate (yprev) at (-0.05,-1.30);
\coordinate (ycurr) at (3.4,-0.5);

\def\deltaR{1.25}
\def\reuseR{2.34}

\coordinate (p1) at ($(yprev)+(\deltaR,0)$);
\coordinate (p2) at ($(yprev)+(0,\deltaR)$);
\coordinate (n1) at ($(ycurr)!\deltaR cm!-90:(p1)$);

\fill[oldgray, opacity=0.03] (yprev) circle (\deltaR);
\fill[newblue, opacity=0.04] (ycurr) circle (\deltaR);
\fill[neworange, opacity=0.03] (ycurr) circle (\reuseR);

\draw[dashed, line width=0.9pt, oldgray]
  (yprev) circle (\deltaR);
\draw[dashed, line width=0.9pt, newblue]
  (ycurr) circle (\deltaR);
\draw[dashed, line width=0.9pt, neworange]
  (ycurr) circle (\reuseR);

\filldraw[black] (yprev) circle (1.6pt);
\filldraw[black] (ycurr) circle (1.6pt);

\node[below left=2pt] at (yprev) {$\bm{y}'$};
\node[below right=2pt] at (ycurr) {$\bm{y}$};

\draw[->, line width=0.9pt, dashed] (yprev) -- (ycurr);

\draw[->, line width=0.9pt, oldgray] (yprev) -- (p1);
\draw[->, line width=0.9pt, oldgray] (yprev) -- (p2);

\node[oldgray, below=2pt] at ($(yprev)!0.55!(p1)$)
  {$\delta' \bm{x}^{\prime 1}$};
\node[oldgray, left=2pt] at ($(yprev)!0.55!(p2)$)
  {$\delta' \bm{x}^{\prime 2}$};

\draw[->, line width=0.9pt, neworange] (ycurr) -- (p1);
\node[neworange, below=2pt] at ($(ycurr)!0.65!(p1)$)
  {$\delta \bm{u}^{1}$};

\draw[->, line width=0.9pt, newblue] (ycurr) -- (n1);
\node[newblue, right=2pt] at ($(ycurr)!0.55!(n1)$)
  {$\delta \bm{x}^2$};

\filldraw[neworange] (p1) circle (2.2pt);
\filldraw[oldgray] (p2) circle (2.2pt);
\filldraw[newblue] (n1) circle (2.2pt);

\end{tikzpicture}
\caption{Reuse of previously sampled points after the incumbent moves from $\bm{y}'$ to $\bm{y}$.
The gray circle is the previous radius-$\delta'$ design region, the blue circle is the current radius-$\delta$ design region, and the orange circle is the reuse region of radius $r \delta$.
The point $\bm{y}' + \delta' \bm{x}^{\prime 1}$ is reused at $\bm{y}$ as the displacement $\delta \bm{u}^{1}$, whereas $\bm{y}' + \delta' \bm{x}^{\prime 2}$ is not reused.
The vector $\delta \bm{x}^2$ denotes a newly selected design direction.}
\label{fig:dfo-design}
\end{figure}

\paragraph{First-Order Models.}
A widely used local model of $g$ is the linear model
\[
\hat{g}(\bm{y} + \bm{x}) \ \ = \ \ \tilde{g}(\bm{y}) + \bm{x}^{\top} \nabla \hat{g}(\bm{y}).
\]
At the time that the model $\hat{g}$ is constructed, the DFO algorithm has already evaluated $g$ at the incumbent solution $\bm{y}$, and thus $\tilde{g}(\bm{y})$ is given.
Suppose additionally that $q$ stored function values $\tilde{g}(\bm{y} + \delta \bm{u}^{j})$, $j=1,\ldots,q$, are reused.
In contrast, $\nabla \hat{g}(\bm{y})$ is unknown and must be estimated.
The design directions $\bm{x}^{1},\ldots, \bm{x}^{k}$ are selected, and the function values $\tilde{g}(\bm{y} + \delta \bm{x}^{1}), \ldots, \tilde{g}(\bm{y} + \delta \bm{x}^{k})$ are computed.
Using both the reused and newly obtained values, the estimated gradient $\nabla \hat{g}(\bm{y})$ is obtained as the solution of the least-squares problem
\begin{align*}
& \min_{\nabla \hat{g}(\bm{y}) \in \mathbb{R}^{d}}
\Bigg\{\sum_{j=1}^{q} \left(\delta \, \big(\bm{u}^{j}\big)^{ \top} \nabla \hat{g}(\bm{y}) - \left[\tilde{g}(\bm{y} + \delta \bm{u}^{j}) - \tilde{g}(\bm{y})\right]\right)^{2} \\*
& \qquad \qquad \qquad + \sum_{i=1}^{k} \left(\delta \, \big(\bm{x}^{i}\big)^{ \top} \nabla \hat{g}(\bm{y}) - \left[\tilde{g}(\bm{y} + \delta \bm{x}^{i}) - \tilde{g}(\bm{y})\right]\right)^{2}\Bigg\}.
\end{align*}
For $q=0$, it reduces to the least-squares problem
\begin{align*}
\min_{\nabla \hat{g}(\bm{y}) \in \mathbb{R}^{d}}
\sum_{i=1}^{k} \left(\delta \, \big(\bm{x}^{i}\big)^{ \top} \nabla \hat{g}(\bm{y}) - \left[\tilde{g}(\bm{y} + \delta \bm{x}^{i}) - \tilde{g}(\bm{y})\right]\right)^{2}.
\end{align*}

For DFO, it is important to control the gradient error $\|\nabla \hat{g}(\bm{y}) - \nabla g(\bm{y})\|_{2}$.
Assume that there exists a constant $L_{\nabla g} > 0$ such that
\[
\left\|\nabla g(\bm{v} + \bm{w}) - \nabla g(\bm{v})\right\|_{2} \ \ \le \ \ L_{\nabla g} \|\bm{w}\|_{2}
\quad \text{for all} \ \bm{v}, \bm{w} \in \mathbb{R}^{d}.
\]
Also, assume that $[\bm{U},\bm{X}]$ has full row rank.
Using the inexact evaluation model \eqref{eq:inexact-function-evaluations}, the Lipschitz continuity of $\nabla g$, and standard perturbation bounds for least-squares problems (see, e.g., \cite[Prop.~3.2]{Demmel1997}), it follows that
\begin{align*}
\left\|\nabla \hat{g}(\bm{y}) - \nabla g(\bm{y})\right\|_{2}
\ \ \le \ \ & \left(\lambda_{d}\left(\left(\bm{A} + \bm{X} \bm{X}^{\top}\right)^{-1}\right)\right)^{1/2} \\
& \quad \cdot \left(q \left(\frac{L_{\nabla g} r^{2} \delta}{2} + \frac{\varepsilon_{\mathrm{abs}}}{\delta}\right)^{2}
+ k \left(\frac{L_{\nabla g} \delta}{2} + \frac{\varepsilon_{\mathrm{abs}}}{\delta}\right)^{2}\right)^{1/2}.
\end{align*}
Minimizing the right-hand side over $\delta > 0$ yields
\[
\delta \ \ = \ \ \sqrt{\frac{2 \varepsilon_{\mathrm{abs}}}{L_{\nabla g}}} \left(\frac{q + k}{q r^{4} + k}\right)^{1/4}.
\]
If $q = 0$, then $\delta = (2 \varepsilon_{\mathrm{abs}} / L_{\nabla g})^{1/2}$, which is similar to eq.~(2.3) in \cite{Berahas2022}, although that result is derived under an error model slightly different from \eqref{eq:inexact-function-evaluations}.
Thus, for fixed $q$, $r$, and $\delta$, the resulting design problem becomes
\[
\min\left\{\lambda_{d}\left(\left(\bm{A} + \bm{X} \bm{X}^{\top}\right)^{-1}\right) \; : \; \bm{x}^{i} \in B(\bm{0},1) \; \forall \ i = 1,\ldots,k\right\}.
\]
This problem is a special case of E-optimal design with overall information matrix $\bm{A} + \bm{X} \bm{X}^{\top}$.

\subsection{Our Contributions}

Despite the challenges posed by natural convex optimization approaches to solving the problem~\eqref{eq:SpectralDesign} or its equivalent reformulation~\eqref{eq:SpectralDesignM}, we present an alternative approach and show that it can be efficiently reformulated as a convex optimization problem.
The main difference from the standard approach is that we consider an optimization problem in which the \emph{eigenvalues} of $\bm{A} + \bm{M}$ are the decision variables.
In addition to the trace constraints, we add constraints based on Weyl's eigenvalue inequalities~\cite{horn2012matrix} that use the fact that the rank of $\bm{M}$ is at most $\min\{d,k\}$.
The resulting convex optimization problem is a relaxation of \eqref{eq:SpectralDesignM}, in the sense that for every feasible solution~$\bm{M}$ of~\eqref{eq:SpectralDesignM}, the eigenvalues of $\bm{A} + \bm{M}$ give a feasible solution of the convex optimization problem, with objective value equal to $F(\bm{A} + \bm{M})$.
While generic convex optimization algorithms can be used to solve this problem, we provide a simple and efficient algorithm to compute an optimal solution.
Then we provide an algorithm that converts the resulting eigenvalues to an optimal design matrix $\bm{X}=[\bm{x}^{1},\ldots,\bm{x}^{k}]$ for problem~\eqref{eq:SpectralDesign} in $O({k}d + d^3)$ steps if $f$ is non-increasing and an $\epsilon$-optimal design if $f$ is Lipschitz continuous.
This algorithm relies on an algorithmic version of the Schur--Horn theorem~\cite{horn2012matrix}.
This polynomial-time method contrasts with optimal design problems over finite candidate sets, where computing an optimal design for typical criteria such as A-, D-, or E-optimality is NP-hard \cite{vcerny2012two,civril2009selecting}.

\begin{theorem}
\label{thm:general_opt_design}
Let  $f : \mathbb{R}^{d} \to \mathbb{R} \cup \{\infty\}$ be symmetric and convex.
Then the following statements hold.
\begin{enumerate}[nosep, label=\arabic*.]
\item If $f$ is non-increasing, then \Cref{alg:computing-optimal-design} returns an optimal solution of
problem~\eqref{eq:SpectralDesign} in runtime $O(d^3+kd)$.
Moreover, the
returned solution is independent of the particular choice of $f$.
\item If $f$ is Lipschitz continuous with Lipschitz constant $L > 0$ on
the compact eigenvalue region
\[
\left\{
\bm{\lambda}\left(\bm A+\bm X\bm X^{\top}\right)
:
\bm x^1,\ldots,\bm x^k\in B(\bm 0,1)
\right\},
\]
then given a value oracle for $f$,
for any fixed $\varepsilon>0$, 
\Cref{alg:computing-optimal-design} returns an $\varepsilon$-optimal
solution of problem~\eqref{eq:SpectralDesign} with $O\!\left(\log(kL/\varepsilon)\right)$ oracle calls to the function $f$ and runtime
\[
O\!\left(d^3+kd+d\log(kL/\varepsilon)\right).
\]
\end{enumerate}
\end{theorem}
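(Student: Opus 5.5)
The plan is to split the argument into an eigenvalue-level convex relaxation and a constructive rounding step based on the Schur--Horn theorem.

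\textbf{Step 1: the eigenvalue relaxation.} Let $\bm{a}=\bm{\lambda}(\bm{A})$ be sorted nondecreasingly, and set $r=\min\{d,k\}$. For any feasible $\bm{X}$, put $\bm{M}=\bm{X}\bm{X}^{\top}$ and $\bm{\mu}=\bm{\lambda}(\bm{A}+\bm{M})$. Then $\bm{\mu}$ satisfies three families of constraints:
\begin{itemize}[nosep]
\item the trace bound $\sum_i \mu_i \le \sum_i a_i + k$;
\item Weyl's monotonicity, $\mu_i \ge a_i$;
\item Weyl's rank interlacing, $\mu_i \le a_{i+r}$ for $i\le d-r$, which holds because $\operatorname{rank}\bm{M}\le r$.
\end{itemize}
Call the resulting polytope $P$. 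The problem $\min\{f(\bm{\mu}):\bm{\mu}\in P\}$ is convex and is a relaxation of \eqref{eq:SpectralDesign}.

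\textbf{Step 2: the relaxation is tight.} I will show that every $\bm{\mu}\in P$ that is sorted is realizable as $\bm{\lambda}(\bm{A}+\bm{X}\bm{X}^{\top})$ with columns of $\bm{X}$ in $B(\bm{0},1)$. Work in an eigenbasis of $\bm{A}$ and look for $\bm{M}$ that is diagonal in that basis. Then $\bm{A}+\bm{M}$ has eigenvalues $a_i+m_i$, where $m_i=\mu_i-a_i\ge0$ (using the ordering given by interlacing). The interlacing constraints ensure that $\bm{m}$ can be arranged to have at most $r$ nonzero entries, after possibly re-matching eigenvalues. This is the step I expect to be the main obstacle: I will likely need to show that an optimal point of the convex problem can be chosen with $\mu_i=a_i$ except on the top $r$ indices, using symmetry of $f$ and a ``water-filling'' structure.

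Given such a diagonal $\bm{M}=\operatorname{diag}(\bm{m})$ with $\operatorname{rank}\le r$ and $\operatorname{Tr}\bm{M}\le k$, we need $k$ vectors in the unit ball with $\sum_i \bm{x}^i(\bm{x}^i)^{\top}=\bm{M}$.
\begin{itemize}[nosep]
\item If $k\ge d$: pad $\bm{m}$ by adding $k-\operatorname{Tr}\bm{M}$ of trace appropriately. Alternatively, use vectors with norms below $1$: the diagonal of a Gram matrix $\bm{X}^{\top}\bm{X}$ with prescribed spectrum $\bm{m}$ (padded with zeros to length $k$) may have any diagonal majorized by that spectrum. Choosing the diagonal $(\operatorname{Tr}\bm{M}/k,\dots)$, which is majorized by any spectrum with that trace, gives $\|\bm{x}^i\|^2=\operatorname{Tr}\bm{M}/k\le1$.
\item The algorithmic Schur--Horn theorem constructs the orthogonal matrix in $O(k^2)$ or $O(kd)$ time via a sequence of Givens rotations, as in the Chan--Li construction. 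Then $\bm{X}=\bm{V}\operatorname{diag}(\sqrt{\bm{m}})\bm{Q}$.
\end{itemize}
The $O(d^3)$ term accounts for the eigendecomposition of $\bm{A}$.

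\textbf{Part 1 ($f$ nonincreasing).} For nonincreasing symmetric convex $f$, the optimum over $P$ is the water-filling vector $\mu_i=\max\{a_i,\min\{t,a_{i+r}\}\}$, with the level $t$ chosen so the trace bound is tight. This vector is weakly majorized from below by every other point of $P$ in the appropriate sense, i.e.\ it is the most ``equalized'' point. A Schur-convexity argument (symmetric convex $\Rightarrow$ Schur-convex, combined with monotonicity to handle weak majorization) shows it minimizes every such $f$. This gives independence from $f$. The level $t$ is computed by sorting and a linear scan, in $O(d\log d)$ time.

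\textbf{Part 2 ($f$ Lipschitz).} The optimum over $P$ restricted to vectors of the above form is parameterized by a single level or a small number of levels. Using convexity and symmetry, I will reduce the problem to a one-dimensional convex minimization over the level $t\in[a_1,a_d+k]$ and run ternary or bisection search. Lipschitzness gives $\varepsilon$-accuracy after $O(\log(kL/\varepsilon))$ oracle calls, each costing $O(d)$ to form $\bm{\mu}$. The rounding step is then applied as before.
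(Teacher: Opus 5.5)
Your architecture is the paper's: the Weyl relaxation, water filling with Schur-convexity, and Schur--Horn with constant Gram diagonal $\operatorname{Tr}(\bm{M})/k$ to keep every column in $B(\bm{0},1)$. However, the step you flag as the main obstacle is exactly the missing idea, and the form you guess for it is wrong.

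A matrix $\bm{M}$ that is diagonal in the eigenbasis of $\bm{A}$ and has rank at most $r$ changes at most $r$ entries of the multiset $\{a_1,\dots,a_d\}$. So your construction cannot realize a general sorted $\bm{\mu}\in P$. For example, with $\bm{a}=(0,1,2,3)$ and $k=1$, the vector $(0.25,1.25,2.25,3.25)$ lies in $P$ but shares no entry with $\bm{a}$. Everything therefore hinges on a structural fact about the water-filling optimum.

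That fact is not ``$\mu_i=a_i$ except on the top $r$ indices''. With your nondecreasing ordering, keeping $a_1,\dots,a_{d-r}$ fixed leaves $\lambda_1=a_1$, which is in general strictly suboptimal for E-optimality. In the example above the optimum is $(1,1,2,3)$, obtained by raising $a_1$.

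What is true is \Cref{lem:simp_alpha}. For $k<d$, with water level $c$ and the convention $t_j=\infty$ for $j>d$, entry $j$ of the water-filling vector equals:
\begin{itemize}
\item $t_{j+k}$ if $t_{j+k}\le c$;
\item $c$ if $t_j<c<t_{j+k}$;
\item $t_j$ if $t_j\ge c$.
\end{itemize}
Take $\ell$ maximal with $t_{\ell+k}\le c$. Then the capped indices $1,\dots,\ell$ take the values $t_{k+1},\dots,t_{\ell+k}$, the indices $\ell+1,\dots,\ell+k$ all equal $c$, and the remaining entries are unchanged. If nothing is capped, the entries raised to $c$ lie among the first $k$ indices. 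Hence the optimal multiset equals that of $\bm{t}+\bm{\beta}'$, where $\beta'_j=(c-t_j)_+$ for $j\le\hat d$ and $\beta'_j=0$ otherwise. The update is supported on the \emph{smallest} $\hat d$ eigenvalues of $\bm{A}$. Only then does your diagonal Schur--Horn rounding apply, with symmetry of $f$ transferring optimality.

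Two further steps are asserted rather than proved.

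\textbf{Part 1.} Your majorization claim needs an argument. The paper first uses monotonicity and $u_d=\infty$ to restrict to total budget exactly $k$. It then uses an exchange argument: any feasible $\bm{y}\ne\bm{y}^*$ has indices with $y_p<y_p^*\le c\le y_q^*<y_q$. Shifting mass from $q$ to $p$ stays feasible and yields a vector majorized by $\bm{y}$, and finitely many such steps reach $\bm{y}^*$.

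\textbf{Part 2.} The map $t\mapsto f(\bm{\mu}(t))$ is \emph{not} convex in the level. For $d=k=2$, $\bm{a}=(0,1)$ and $f(\bm{x})=(x_1-2)^2+(x_2-2)^2$, it equals $(t-2)^2+1$ on $[0,1]$ and $2(t-2)^2$ on $[1,1.5]$, with a concave kink at $t=1$. You also do not justify restricting to water-filling vectors when $f$ is not monotone.

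The paper avoids both problems by parametrizing by the budget $s\in[0,k]$:
\begin{itemize}
\item For fixed $s$, $\bm{t}+\bm{\beta}(s)$ is majorized by every feasible point with the same budget (\Cref{lem:general}). So $s\mapsto f(\bm{t}+\bm{\beta}(s))$ is a partial minimum of a jointly convex problem, hence convex.
\item Since $\bm{\beta}(s)$ is coordinatewise nondecreasing, $\|\bm{\beta}(s)-\bm{\beta}(s')\|_1=|s-s'|$. An $\varepsilon/L$-accurate $s$ on an interval of length $k$ then gives the stated $O(\log(kL/\varepsilon))$ oracle calls.
\end{itemize}
Your search interval $[a_1,a_d+k]$ instead has a length that depends on the spectrum of $\bm{A}$, so it does not give that bound.
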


 We emphasize that the first result in Theorem~\ref{thm:general_opt_design} guarantees that given the prior information matrix $\bm{A}$ and target number of design vectors $k$, there is a \emph{single} set of design vectors that are optimal for every non-increasing $f$ that the algorithm efficiently computes. This captures all the natural criteria, such as  A-, D-, or E-optimality, that are defined by a non-increasing $f$. 

In light of our results, we revisit convex reformulations for problem \eqref{eq:SpectralDesignM} and obtain the following corollary, which identifies the feasible set of \eqref{eq:SpectralDesignM} with a rank-constrained set. 
\begin{corollary}
\label{cor:spectral-design-rank}
The feasible set of problem~\eqref{eq:SpectralDesignM} is given by
\[
\left\{\bm{M} = \sum_{i=1}^{k} \bm{x}^{i} \left(\bm{x}^{i}\right)^{\top} \  : \  \bm{x}^{1},\ldots,\bm{x}^{k} \in B(\bm{0},1)\right\}
\ \ = \ \ \Big\{\bm{M} \in \mathbb{S}_{d}^+ \  : \  \operatorname{Tr}(\bm{M}) \le k, \  \operatorname{rank}(\bm{M}) \le k\Big\}.
\]

\end{corollary}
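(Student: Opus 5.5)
The inclusion ``$\subseteq$'' is immediate, and the substance is the reverse inclusion, which I would prove by a direct Schur--Horn construction.

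For ``$\subseteq$'', take $\bm{M}=\bm{X}\bm{X}^\top$ with $\bm{X}=[\bm{x}^1,\ldots,\bm{x}^k]$. Then $\bm{M}\succeq \bm{0}$ and $\operatorname{rank}(\bm{M})=\operatorname{rank}(\bm{X})\le k$. Moreover $\operatorname{Tr}(\bm{M})=\sum_i \|\bm{x}^i\|^2\le k$.

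For ``$\supseteq$'', let $\bm{M}\in\mathbb{S}_d^+$ with $\operatorname{Tr}(\bm{M})\le k$ and $r\defi\operatorname{rank}(\bm{M})\le k$. Write $\bm{M}=\bm{V}\bm{\Lambda}\bm{V}^\top$ with eigenvalues $\mu_1\ge\cdots\ge\mu_d\ge 0$, of which at most $r$ are nonzero. Set $m\defi\min\{d,k\}$, so that $r\le m$.

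I would first reduce to the square Gram problem. The goal is a symmetric matrix $\bm{G}\in\mathbb{S}_m^+$ with eigenvalues $(\mu_1,\ldots,\mu_m)$ (padding with zeros is automatic since $r\le m$) and with every diagonal entry $G_{ii}\le 1$. Given such $\bm{G}=\bm{W}\operatorname{diag}(\mu_1,\ldots,\mu_m)\bm{W}^\top$ with $\bm{W}$ orthogonal, I would define
\[
\bm{Y}\defi\operatorname{diag}(\mu_1,\ldots,\mu_m)^{1/2}\bm{W}^\top\in\mathbb{R}^{m\times m}.
\]
Then $\bm{Y}^\top\bm{Y}=\bm{G}$, so the columns of $\bm{Y}$ have squared norms $G_{ii}\le 1$. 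Also $\bm{Y}\bm{Y}^\top=\operatorname{diag}(\mu_1,\ldots,\mu_m)$. Let $\bm{X}\defi\bm{V}_{:,1:m}\bm{Y}$, padded with zero columns if $k>m$. Its columns are $\bm{V}_{:,1:m}$ applied to the columns of $\bm{Y}$, so they keep the same norms since $\bm{V}_{:,1:m}$ has orthonormal columns. And $\bm{X}\bm{X}^\top=\bm{V}_{:,1:m}\operatorname{diag}(\mu)\bm{V}_{:,1:m}^\top=\bm{M}$, because $\mu_j=0$ for $j>m$.

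The main step is constructing $\bm{G}$, and I would do it by choosing the target diagonal and invoking Schur--Horn. I want a diagonal $\bm{g}\in[0,1]^m$ majorized by $\bm{\mu}=(\mu_1,\ldots,\mu_m)$. The natural choice is $\bm{g}=\bm{\mu}$ clipped: set $g_i=\min\{1,\mu_i\}$ for indices with $\mu_i\le 1$ and redistribute the excess. Simpler is to try the uniform diagonal $g_i=\operatorname{Tr}(\bm{M})/m$ for all $i$.

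Since $\bm{\mu}$ sums to $\operatorname{Tr}(\bm{M})$, the constant vector with the same sum is majorized by every vector with that sum. So by Schur--Horn there is a symmetric $\bm{G}$ with spectrum $\bm{\mu}$ and constant diagonal $\operatorname{Tr}(\bm{M})/m$. It remains to check $\operatorname{Tr}(\bm{M})/m\le 1$. If $m=k$ this is the trace hypothesis. If $m=d<k$, it requires $\operatorname{Tr}(\bm{M})\le d$, which can fail when $d<\operatorname{Tr}(\bm{M})\le k$. In that case the fix is to work with $m=k$ columns instead: embed $\bm{\mu}$ into $\mathbb{R}^k$ by appending $k-d$ zeros, apply Schur--Horn in dimension $k$ to get $\bm{G}\in\mathbb{S}_k^+$ with diagonal $\operatorname{Tr}(\bm{M})/k\le 1$, and take $\bm{Y}\in\mathbb{R}^{d\times k}$ as the first $d$ rows of $\operatorname{diag}(\mu)^{1/2}\bm{W}^\top$. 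The remaining rows vanish because those eigenvalues are zero.

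Thus in all cases I would take $m=k$, and the rank condition enters exactly here: only $\min\{d,k\}$ eigenvalues can be nonzero, so the $k$-dimensional spectrum is a legitimate spectrum of a $k\times k$ PSD matrix whose square-root factor has only $d$ nonzero rows. I expect this dimension bookkeeping to be the only delicate point. Alternatively, the whole corollary can be cited from Theorem~\ref{thm:general_opt_design}'s construction (Algorithm~\ref{alg:computing-optimal-design}), which realizes any eigenvalue vector satisfying the rank and trace constraints.
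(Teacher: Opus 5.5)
Your final argument is correct and is essentially the paper's proof. You pad the spectrum of $\bm{M}$ with zeros to length $k$, note that the constant vector with entries $\operatorname{Tr}(\bm{M})/k\le 1$ is majorized by it, apply Schur--Horn to get $\bm{G}\in\mathbb{S}_k^+$, and factor $\bm{G}=\bm{X}^\top\bm{X}$ with $\bm{X}\bm{X}^\top=\bm{M}$; the detour through $m=\min\{d,k\}$ and the clipping idea can be dropped. Drop the closing ``alternatively'' remark as well, because Step~6 of \Cref{alg:computing-optimal-design} invokes the construction from this very lemma, so citing the algorithm would be circular.
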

Observe that \Cref{cor:spectral-design-rank} implies that, when $k \geq d$, \eqref{eq:RelaxedSpectralDesign} is an exact reformulation of \eqref{eq:SpectralDesign} because the rank constraint is superfluous.
Also, when $k < d$, this gives a nonconvex rank-constrained reformulation of \eqref{eq:SpectralDesign} and \eqref{eq:SpectralDesignM}.

In addition to the theoretical results, we provide numerical
illustrations in a DFO setting. On standard test problems, we incorporate
the proposed spectral designs into a first-order regression-based DFO
method and compare the resulting method with a finite-difference variant
and a coordinate-based regression variant.

\subsection{Related Literature}

Optimal experimental design is a classical statistical problem that aims to select factor values to minimize the variance of an estimator or to maximize the Fisher information under a parametric model \cite{pukelsheim2006optimal}. A classical continuous (or approximate) design framework \cite{kiefer1985collected} represents a design as a probability measure over a design space, leading to convex optimization over information matrices for criteria such as A-, D-, and E-optimality, which are functions of the eigenvalues of the information matrix. Foundational monographs such as \cite{pukelsheim2006optimal} develop this theory rigorously, emphasizing convexity, Loewner ordering, and equivalence theorems.
In \cite{kiefer1960equivalence,whittle1973some}, continuous optimal design is formulated through directional derivative characterizations and convex optimization approaches that exploit the spectral structure of the information matrix. Most existing methods are tailored to a specific design criterion (e.g., A- or D-optimality). In contrast, the spectral design results in this paper hold for a broad class of convex spectral criteria.

When the candidate design vectors are finite, the optimal design problem becomes combinatorial. Exact experimental design aims to select a given number of design vectors from a finite candidate set, resulting in a discrete optimization problem with a convex objective function over integer allocation constraints \cite{Sagnol2015}.
Recent advances in mathematical programming have enabled exact computation for classical criteria. For example, mixed-integer second-order cone programming formulations for D-optimal design \cite{sagnol2011computing,Sagnol2015} provide provably optimal solutions and extend to related criteria such as A- and G-optimality. Branch-and-bound and branch-and-cut methods have also been developed to solve combinatorial design problems to global optimality \cite{hendrych2023solving,pillai2025computing,ponte2025relationship,wang2025d}.
Approximation algorithms for cardinality-constrained A- and D-optimal design using sampling, rounding, and volume sampling techniques have been proposed in \cite{madan2019combinatorial,nikolov2022proportional,singh2020approximation,wang2025algorithms}, offering scalable methods with performance guarantees for large candidate sets.
This paper considers a convex design space rather than a finite candidate set.

Across both continuous and combinatorial design spaces, many design criteria are spectral functions of information matrices.
The convex analysis of spectral functions \cite{lewis1995convex} provides the mathematical foundation for understanding such objectives, as spectral convexity and majorization theory underpin equivalence theorems, subgradient characterizations, and relaxation techniques \cite{kim2022convexification,li2025partial}.
\cite{kiefer1975construction,pukelsheim2006optimal} show how eigenvalue structure captures design efficiency across a family of convex objectives. This work builds directly on this spectral viewpoint and shows that a single design can optimize a broad class of symmetric convex spectral criteria.

DFO addresses optimization problems with an oracle that provides (exact or approximate) objective values at specified points, but that does not provide derivatives.
Model-based methods construct local linear or quadratic surrogate models from function values provided by the oracle. 
The monograph \cite{conn2009introduction} develops the geometry-of-sampling perspective, introduces the notion of poisedness, and shows that the conditioning and spectral properties of sampling matrices govern stability and error bounds for interpolation and regression models.
This directly connects DFO sampling to experimental design \cite{wild2013global}: selecting directions that control the eigenvalues of matrices such as $\bm{X} \bm{X}^{\top}$ affects the quality of gradient and Hessian estimates, particularly under noisy or inexact evaluations.
Thus, spectral criteria, such as minimizing $\lambda_{d}((\bm{X} \bm{X}^{\top})^{-1})$ or trace-inverse surrogates, arise naturally as objectives for constructing sampling sets in DFO \cite{bandeira2012computation}.
Unlike the existing literature, which typically focuses on specific criteria or heuristic constructions, this paper optimally selects sampling directions using a fast, scalable algorithm.

\section{Lower Bounds for Spectral Design}

First, we derive a convex optimization problem, with eigenvalues as decision variables, such that the optimal objective value gives a lower bound for problem~\eqref{eq:SpectralDesign}.
Unlike the relaxation~\eqref{eq:RelaxedSpectralDesign}, the feasible set of this optimization problem depends not only on $d$ and $k$, but also on $\bm{A}$.
We give a simple, efficient algorithm to compute an optimal solution for this optimization problem.
In the next section, we show how to compute a feasible design that attains the lower bound, and that is therefore optimal for problem~\eqref{eq:SpectralDesign}.

\subsection{The Lower Bound}

Since orthonormal transformations preserve eigenvalues and Euclidean norms, problem \eqref{eq:SpectralDesign} is invariant under orthonormal changes of coordinates.
Therefore, without loss of generality, we assume that $\bm{A}$ is diagonal.
Let $\bm{A} = \operatorname{diag}(t_{1},\ldots,t_{d})$ with $0 \leq t_{1} \le t_{2} \le \cdots \le t_{d}$.

We use Weyl's eigenvalue inequalities to construct the lower bound.
\begin{theorem}[Weyl's Eigenvalue Inequalities, {see \cite[Theorem 4.3.1]{horn2012matrix}}]
\label{thm_weyl}
Let $\bm{B},\bm{C} \in \mathbb{S}^{d}$ be two matrices with eigenvalues $\lambda_{1}(\cdot) \le \cdots \le \lambda_{d}(\cdot)$ in non-decreasing order.
Then for all $i,j$ satisfying $1 \le i,j$ and $i+j-1 \le d$, it holds that
\[
\lambda_{i+j-1}(\bm{B} + \bm{C}) \ \ \geq \ \ \lambda_{i}(\bm{B})+\lambda_{j}(\bm{C})
\]
and for all $i,j$ satisfying $1 \le i,j$ and $i+j-d \ge 1$,
\[
\lambda_{i+j-d}(\bm{B} + \bm{C}) \ \ \le \ \ \lambda_{i}(\bm{B}) + \lambda_{j}(\bm{C}).
\]
\end{theorem}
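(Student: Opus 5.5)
The plan is to derive both inequalities from the Courant--Fischer min--max characterization of eigenvalues together with a dimension-counting argument, and then to obtain the second inequality from the first by negation. We use the max--min form: for a symmetric $\bm{M}\in\mathbb{S}^{d}$ with eigenvalues in non-decreasing order,
\[
\lambda_{m}(\bm{M}) \;=\; \max\Big\{ \min_{\bm{x}\in V,\,\bm{x}\neq \bm{0}} \frac{\bm{x}^{\top}\bm{M}\bm{x}}{\bm{x}^{\top}\bm{x}} \;:\; V\subseteq\mathbb{R}^{d}\ \text{a subspace},\ \dim V = d-m+1\Big\}.
\]

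For the first inequality, fix $i,j\ge 1$ with $i+j-1\le d$. Let $V_{B}$ be the span of orthonormal eigenvectors of $\bm{B}$ for the eigenvalues $\lambda_{i}(\bm{B}),\ldots,\lambda_{d}(\bm{B})$. Then $\dim V_{B}=d-i+1$, and $\bm{x}^{\top}\bm{B}\bm{x}\ge \lambda_{i}(\bm{B})\,\bm{x}^{\top}\bm{x}$ for all $\bm{x}\in V_{B}$. Define $V_{C}$ analogously for $\bm{C}$ and index $j$; it has dimension $d-j+1$ and satisfies $\bm{x}^{\top}\bm{C}\bm{x}\ge\lambda_{j}(\bm{C})\,\bm{x}^{\top}\bm{x}$ on $V_{C}$.

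Since $\dim(V_{B}+V_{C})\le d$, we get
\[
\dim(V_{B}\cap V_{C}) \;\ge\; (d-i+1)+(d-j+1)-d \;=\; d-(i+j-1)+1 \;\ge\; 1.
\]
Choose any subspace $W\subseteq V_{B}\cap V_{C}$ with $\dim W = d-(i+j-1)+1$. For every $\bm{x}\in W$ we have $\bm{x}^{\top}(\bm{B}+\bm{C})\bm{x}\ge(\lambda_{i}(\bm{B})+\lambda_{j}(\bm{C}))\,\bm{x}^{\top}\bm{x}$. Plugging $W$ into the max--min characterization with $m=i+j-1$ yields $\lambda_{i+j-1}(\bm{B}+\bm{C})\ge\lambda_{i}(\bm{B})+\lambda_{j}(\bm{C})$.

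For the second inequality, use the identity $\lambda_{m}(-\bm{M})=-\lambda_{d-m+1}(\bm{M})$. Given $i,j$ with $i+j-d\ge1$, set $i'=d-i+1$ and $j'=d-j+1$. Then $i',j'\ge1$ and $i'+j'-1=2d-i-j+1\le d$, precisely because $i+j-d\ge1$. Applying the first inequality to $-\bm{B},-\bm{C}$ gives $\lambda_{i'+j'-1}(-\bm{B}-\bm{C})\ge\lambda_{i'}(-\bm{B})+\lambda_{j'}(-\bm{C})$, that is,
\[
-\lambda_{i+j-d}(\bm{B}+\bm{C}) \;\ge\; -\lambda_{i}(\bm{B})-\lambda_{j}(\bm{C}),
\]
where we used $d-(2d-i-j+1)+1=i+j-d$. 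This is the claim.

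The only substantive ingredient is the Courant--Fischer theorem itself. If it is not taken as known, it must be proved by the same kind of dimension count: any subspace of dimension $d-m+1$ meets the span of the bottom $m$ eigenvectors nontrivially, and the span of the top $d-m+1$ eigenvectors attains the value $\lambda_{m}$. The main care point is the index bookkeeping in the dimension count and in the negation step, which I have checked above.
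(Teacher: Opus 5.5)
Your proof is correct. The dimension count $\dim(V_{\bm B}\cap V_{\bm C})\ge d-(i+j-1)+1\ge 1$, the max--min form of Courant--Fischer, and the negation step via $\lambda_m(-\bm M)=-\lambda_{d-m+1}(\bm M)$ with $i'=d-i+1$, $j'=d-j+1$ all check out; the tacit assumption $i,j\le d$ is needed anyway for $\lambda_i(\bm B),\lambda_j(\bm C)$ to be defined. The paper gives no proof of its own and cites Horn--Johnson, Theorem~4.3.1, whose standard argument is essentially yours: an eigenvector-subspace dimension count with Rayleigh-quotient bounds gives one family (there, as I recall, a third subspace spanned by eigenvectors of the sum replaces your appeal to Courant--Fischer), and the other family follows by passing to $-\bm B,-\bm C$.
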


For any $\bm{X}$ feasible for~\eqref{eq:SpectralDesign}, let us first consider the case with $k < d$.
Since $\bm{X} \in \mathbb{R}^{d \times k}$, the matrix $\bm{X} \bm{X}^{\top}$ has rank at most $k$.
Moreover,
\[
\sum_{i=1}^{d} \lambda_{i}(\bm{X} \bm{X}^{\top}) \ \ = \ \ \operatorname{Tr}(\bm{X} \bm{X}^{\top}) \ \ \le \ \ k.
\]
It follows from \Cref{thm_weyl} with $\bm{B} = \bm{A}$ and $\bm{C} = \bm{X} \bm{X}^{\top}$ that
\begin{equation}
\lambda_{j}(\bm{A} + \bm{X} \bm{X}^{\top}) \ \ \ge \ \ \lambda_{j}(\bm{A}) \ \ = \ \ t_{j}
\qquad \forall \ j=1,\ldots,d,
\label{eqn:lower_bound_weyl}
\end{equation}
and
\begin{equation*}
\lambda_{j}(\bm{A} + \bm{X} \bm{X}^{\top}) \ \ \le \ \ \lambda_{j + k}(\bm{A}) + \lambda_{d - k}(\bm{X} \bm{X}^{\top})
\qquad \forall \ j=1,\ldots,d-k.
\end{equation*}
Since $\bm{X} \bm{X}^{\top}$ has rank at most $k$, its smallest $d - k$ eigenvalues are zero, and thus $\lambda_{d - k}(\bm{X} \bm{X}^{\top}) = 0$.
Hence, 
\begin{equation}
\lambda_{j}(\bm{A} + \bm{X} \bm{X}^{\top}) \ \ \le \ \ \lambda_{j + k}(\bm{A}) \ \ = \ \ t_{j + k}
\qquad \forall \ j=1,\ldots,d-k.
\label{eqn:upper_bound_weyl}
\end{equation}
Let
\[
\beta_{j} \ \ \defi \ \ \lambda_{j}(\bm{A} + \bm{X} \bm{X}^{\top}) - t_{j}
\qquad \forall \ j=1,\ldots,d.
\]
It follows from \eqref{eqn:lower_bound_weyl} and \eqref{eqn:upper_bound_weyl} that $\bm{\beta}$ satisfies
\[
\beta_{j} \ \ \ge \ \ 0 \ \ \forall \ j=1,\ldots,d, \qquad
t_{j} + \beta_{j} \ \ \le \ \ t_{j+k} \ \ \forall \ j=1,\ldots,d-k, \qquad
\sum_{j=1}^{d} \beta_{j} \ \ = \ \ \operatorname{Tr}(\bm{X} \bm{X}^{\top}) \ \ \le \ \ k.
\]

Next consider the case with $k \ge d$.
In this case, the Weyl lower bounds $\beta_{j} \ge 0$, $j=1,\ldots,d$, and the trace constraint $\sum_{j=1}^{d} \beta_{j} \le k$ remain valid, whereas the Weyl upper-bound constraints \eqref{eqn:upper_bound_weyl} are vacuous.

The preceding arguments yield the following lower bound on the optimal value of problem~\eqref{eq:SpectralDesign}.
\begin{proposition}
\label{prop:lower_bound_general}
Let $\bm{A} = \operatorname{diag}(t_{1},\ldots,t_{d})$ with $0 \leq t_{1} \le \cdots \le t_{d}$, and let $\hat{d} \defi \min\{d,k\}$.
Then the optimal value of problem~\eqref{eq:SpectralDesign} is bounded below by the optimal value of
\begin{equation}
\label{eqn:relaxation-general}
\min_{\bm{\beta} \in \mathbb{R}_+^{d}} \left\{f(t_{1} + \beta_{1},\ldots,t_{d} + \beta_{d}) \; : \; t_{j} + \beta_{j} \le t_{j+\hat{d}} \  \forall \, j=1,\ldots,d-\hat{d}, \ \sum_{j=1}^{d} \beta_{j} \le k\right\}.
\tag{Relaxed-Eigen}
\end{equation}
\end{proposition}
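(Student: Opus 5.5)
The plan is to show that every feasible design $\bm{X}$ for \eqref{eq:SpectralDesign} produces a feasible point $\bm{\beta}$ of \eqref{eqn:relaxation-general} whose objective value equals $F(\bm{A}+\bm{X}\bm{X}^{\top})$. Taking the infimum over $\bm{X}$ then gives the bound. First, fix any $\bm{X}=[\bm{x}^1,\ldots,\bm{x}^k]$ with $\bm{x}^i\in B(\bm{0},1)$. Define $\beta_j \defi \lambda_j(\bm{A}+\bm{X}\bm{X}^{\top})-t_j$ for $j=1,\ldots,d$, with eigenvalues sorted non-decreasingly as in the paper's convention. Since $\bm{A}$ is diagonal with $t_1\le\cdots\le t_d$, we have $\lambda_j(\bm{A})=t_j$. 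Consequently $f(t_1+\beta_1,\ldots,t_d+\beta_d)=f(\bm{\lambda}(\bm{A}+\bm{X}\bm{X}^{\top}))=F(\bm{A}+\bm{X}\bm{X}^{\top})$.

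Second, I verify the constraints using \Cref{thm_weyl}, with $\bm{B}=\bm{A}$ and $\bm{C}=\bm{X}\bm{X}^{\top}\succeq 0$.
\begin{itemize}[nosep]
\item \emph{Nonnegativity.} The first inequality with $i=j$ and index $1$ for $\bm{C}$ gives $\lambda_j(\bm{A}+\bm{C})\ge t_j+\lambda_1(\bm{C})\ge t_j$, since $\lambda_1(\bm{C})\ge 0$. Hence $\bm{\beta}\in\mathbb{R}^d_+$.
\item \emph{Trace.} Linearity of the trace gives $\sum_j\beta_j=\operatorname{Tr}(\bm{A}+\bm{C})-\operatorname{Tr}(\bm{A})=\operatorname{Tr}(\bm{C})=\sum_i\|\bm{x}^i\|^2\le k$.
\item \emph{Upper-bound constraints.} If $k\ge d$, then $\hat d=d$ and these constraints are vacuous. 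If $k<d$, then $\hat d=k$. Apply the second Weyl inequality with index $j+k$ for $\bm{A}$ and index $d-k$ for $\bm{C}$; the resulting index is $(j+k)+(d-k)-d=j\ge1$. This gives $\lambda_j(\bm{A}+\bm{C})\le t_{j+k}+\lambda_{d-k}(\bm{C})$. Because $\operatorname{rank}\bm{C}\le k$, at least $d-k$ eigenvalues of $\bm{C}$ vanish, and since $\bm{C}\succeq0$ these are the smallest ones, so $\lambda_{d-k}(\bm{C})=0$. Therefore $t_j+\beta_j\le t_{j+\hat d}$ for $j=1,\ldots,d-\hat d$.
\end{itemize}

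Finally, since every feasible $\bm{X}$ maps to a feasible $\bm{\beta}$ with equal objective, the infimum of \eqref{eq:SpectralDesign} is at least the optimal value of \eqref{eqn:relaxation-general}. If some design gives $F=\infty$, the inequality still holds trivially in the extended reals.

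The only delicate point is the index bookkeeping in Weyl's second inequality: its indices must satisfy $i+j-d\ge1$, and the rank argument must yield exactly $\lambda_{d-k}(\bm{C})=0$ in the non-decreasing ordering. Both checks are handled in the third item above.
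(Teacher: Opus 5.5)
Your proposal is correct and follows the paper's argument. You map each feasible design $\bm{X}$ to $\bm{\beta}=\bm{\lambda}(\bm{A}+\bm{X}\bm{X}^{\top})-\bm{t}$, get nonnegativity and the caps $t_j+\beta_j\le t_{j+k}$ from Weyl's inequalities with the same index choices (using $\lambda_{d-k}(\bm{X}\bm{X}^{\top})=0$ from the rank bound), get the budget from the trace, and note that the objective values coincide.
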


\begin{proof}
For any feasible $\bm{X}$ of problem~\eqref{eq:SpectralDesign}, it follows from Weyl's inequalities that the vector $\bm{\beta} = \bm{\lambda}(\bm{A} + \bm{X} \bm{X}^{\top}) - \bm{\lambda}(\bm{A})$ is feasible for problem \eqref{eqn:relaxation-general}.
Also, $F(\bm{A} + \bm{X} \bm{X}^{\top}) = f(t_{1} + \beta_{1},\ldots,t_{d} + \beta_{d})$, and thus the objective value of $\bm{X}$ in~\eqref{eq:SpectralDesign} is equal to the objective value of $\bm{\beta}$ in~\eqref{eqn:relaxation-general}.
Thus, the optimal value of problem~\eqref{eqn:relaxation-general} lower bounds the optimal value of problem~\eqref{eq:SpectralDesign}.
\end{proof}

\Cref{prop:lower_bound_general} relaxes the matrix-valued optimization problem~\eqref{eq:SpectralDesign} to a finite-dimensional convex program in the eigenvalue increments $\bm{\beta}$.
The relaxation preserves the spectral structure of the original problem: it captures the fact that adding $\bm{X} \bm{X}^{\top}$ can increase eigenvalues of $\bm{A}$, and that the total increase is limited by $\operatorname{Tr}(\bm{X} \bm{X}^{\top}) \le k$ and by the rank constraint $\operatorname{rank}(\bm{X} \bm{X}^{\top}) \le \hat{d}$. The lower-bound problem depends only on the eigenvalues $\bm{t}$ of the prior matrix $\bm{A}$ and on the number $k$ of new design vectors. It no longer depends on a particular factorization of the rank-one update.
As shown in the subsequent sections, this lower bound is tight: we construct a design matrix $\bm{X}$ that attains it. This tightness is the key step toward proving \Cref{thm:general_opt_design} and reveals that the optimal design can be characterized entirely by eigenvalue allocation rather than by the geometry of design vectors.

Next we introduce some fundamental definitions about majorization and Schur-convex functions that will be useful for showing the optimality of a solution constructed for problem~\eqref{eqn:relaxation-general}.

\paragraph{Majorization and Schur-Convexity.} 
We follow the standard conventions in majorization theory; see, for example, \cite{marshall1979inequalities}.
For a vector $\bm{x} \in \mathbb{R}^{d}$, let $\bm{x}^{\downarrow}$ denote the vector obtained by sorting the coordinates of $\bm{x}$ in non-increasing order.

\begin{definition}[Majorization]
\label{def:majorization}
For $\bm{x},\bm{y}\in\mathbb{R}^{d}$, we say that $\bm{x}$ is \emph{majorized by}
$\bm{y}$, and write $\bm{x} \preceq \bm{y}$, if
\begin{equation}
\label{eqn:majorization-definition}
\sum_{i=1}^{m} x_{i}^{\downarrow} \ \ \le \ \ \sum_{i=1}^m y_{i}^{\downarrow}
\qquad \forall \ m=1,\ldots,d-1,
\end{equation}
and
\begin{equation}
\label{eqn:majorization-equal-sum}
\sum_{i=1}^{d} x_{i} \ \ = \ \ \sum_{i=1}^{d} y_{i}.
\end{equation}
In words, $\bm{x} \preceq \bm{y}$ means that $\bm{x}$ is less spread out than $\bm{y}$ while having the same total mass.
\end{definition}

Functions that are monotone with respect to the majorization order are called Schur-convex, as introduced below.
\begin{definition}[Schur-convexity]
\label{def:schur-convexity}
A function $f : \mathbb{R}^{d} \to \mathbb{R} \cup \{\infty\}$ is called \emph{Schur-convex} if
\begin{equation}
\label{eqn:schur-convex-definition}
\bm{x} \preceq \bm{y}
\quad \Longrightarrow \quad
f(\bm{x}) \le f(\bm{y}).
\end{equation}
\end{definition}
The following is a standard result about majorization and Schur-convexity (see~\cite[Proposition~C.2]{marshall1979inequalities}).\footnote{Although \cite[Proposition~C.2]{marshall1979inequalities}
is stated for real-valued functions, its conclusion also holds for
extended-real-valued convex functions. Indeed, by the convex-hull
characterization of majorization
\cite[pp.~8--9]{marshall1979inequalities}, if $\bm{x}\preceq\bm{y}$,
then $\bm{x}$ lies in the convex hull of the permutations of $\bm{y}$.
If $f(\bm{y}) < \infty$, then symmetry and convexity imply that $f(\bm{x})\le f(\bm{y})$.
If $f(\bm{y}) = \infty$, then $f(\bm{x})\le f(\bm{y})$ trivially holds.}
\begin{lemma}
\label{lem:majorization-schur-convexity}
If $f : \mathbb{R}^{d} \to \mathbb{R} \cup \{\infty\}$ is a symmetric, convex function, then $f$ is Schur-convex.
Thus, if a feasible vector is majorized by every other feasible vector, then it minimizes every symmetric convex objective over that feasible set.
\end{lemma}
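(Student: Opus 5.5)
The plan is to prove the first claim (Schur-convexity) by the convex-hull characterization of majorization, and then read off the second claim directly from it.

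\emph{Step 1 (convex hull).} Suppose $\bm{x} \preceq \bm{y}$. By the Hardy--Littlewood--P\'olya/Birkhoff theorem there is a doubly stochastic matrix $\bm{D}$ with $\bm{x} = \bm{D}\bm{y}$. Birkhoff's theorem writes $\bm{D}$ as a convex combination of permutation matrices, $\bm{D} = \sum_{\ell} \theta_{\ell} \bm{P}_{\ell}$ with $\theta_{\ell} \ge 0$ and $\sum_{\ell}\theta_{\ell} = 1$. Hence
\[
\bm{x} \;=\; \sum_{\ell} \theta_{\ell}\, \bm{P}_{\ell}\bm{y},
\]
so $\bm{x}$ lies in the convex hull of the permutations of $\bm{y}$.

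\emph{Step 2 (Schur-convexity).} If $f(\bm{y}) = \infty$, then $f(\bm{x}) \le f(\bm{y})$ holds trivially. Otherwise, symmetry gives $f(\bm{P}_{\ell}\bm{y}) = f(\bm{y}) < \infty$ for every $\ell$. Convexity, in the form of Jensen's inequality for extended-real-valued convex functions at finitely many points with finite values, then gives
\[
f(\bm{x}) \;\le\; \sum_{\ell}\theta_{\ell}\, f(\bm{P}_{\ell}\bm{y}) \;=\; f(\bm{y}).
\]
Jensen is legitimate here because all the values involved are finite, so no $\infty - \infty$ issue arises.

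\emph{Step 3 (the consequence).} Let $\bm{x}^{*}$ be feasible and satisfy $\bm{x}^{*} \preceq \bm{y}$ for every feasible $\bm{y}$. Step 2 gives $f(\bm{x}^{*}) \le f(\bm{y})$ for all feasible $\bm{y}$. Hence $\bm{x}^{*}$ minimizes $f$, and this holds for every symmetric convex $f$ simultaneously.

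\emph{Main obstacle.} The only nontrivial ingredient is Step 1, the passage from the partial-sum inequalities to a doubly stochastic representation. This is classical, and I would cite it from \cite{marshall1979inequalities} rather than reprove it. If a self-contained argument were needed, I would use the standard construction of $\bm{x}$ from $\bm{y}$ by at most $d-1$ ``Robin Hood'' T-transforms. Each T-transform is a convex combination of the identity and a transposition, so the composition yields the required convex combination of permutation matrices.
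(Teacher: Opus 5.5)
Your proposal is correct and follows essentially the same route as the paper. The paper cites Marshall--Olkin Proposition~C.2 and justifies the extended-real-valued case in a footnote exactly as you do: majorization places $\bm{x}$ in the convex hull of the permutations of $\bm{y}$, the case $f(\bm{y})=\infty$ is trivial, and otherwise symmetry plus convexity give $f(\bm{x})\le f(\bm{y})$; your Birkhoff/T-transform details and your explicit Step~3 just spell out what the paper leaves to the citation.
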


\subsection{An Optimal Solution to the Lower Bound Problem: non-increasing Case}
\label{sec_opt_lb:monotone}

In this section, we give a simple algorithm to compute an optimal solution for problem~\eqref{eqn:relaxation-general} when the function $f$ is non-increasing.
In the next subsection, we build on this result to give an algorithm for the general case.

It is convenient to write the Weyl upper bounds as capacities.
Recalling $\hat{d} = \min\{d,k\}$, let
\begin{equation}
\label{eqn:caps-water-filling}
u_{j} \ \ \defi \ \
\begin{cases}
t_{j + \hat{d}}, & j = 1,\ldots,d-\hat{d},\\[1mm]
\infty, & j = d-\hat{d}+1,\ldots,d.
\end{cases}
\end{equation}
Then \eqref{eqn:relaxation-general} becomes
\begin{equation}
\label{eqn:box-relaxation}
\min_{\bm{\beta} \in \mathbb{R}_+^{d}}
\left\{f(\bm{t} + \bm{\beta}) \; : \; 0 \le \beta_{j} \le u_{j} - t_{j} \ \forall \ j = 1, \ldots, d, \ \sum_{j=1}^{d} \beta_{j} \le k\right\}.
\end{equation}
The variables $t_{j} + \beta_{j}$ may be interpreted as water levels in buckets: $t_{j}$ is the initial level, $u_{j}$ is the capacity, and $k$ is the total amount of water available.

For any $k \ge 1$ and $c \ge t_{1}$, let
\begin{equation}
\label{eqn:Phi-water-filling}
\Phi_{k}(c) \ \ \defi \ \ \sum_{j=1}^{d} \min\big\{(c-t_{j})_+,\,u_{j}-t_{j}\big\},
\qquad \mbox{where } (x)_+ \defi \max\{x,0\}.
\end{equation}
Thus $\Phi_{k}(c)$ is the amount of water required to raise every bucket to level $c$, truncating bucket~$j$ at its capacity $u_{j}$ if $u_{j} \le c$
($\Phi_{k}$ depends on~$k$ because $\bm{u}$ depends on~$k$).
Since the final $\hat{d}$ buckets have infinite capacity, it follows that $\Phi_{k}$ is increasing and $\Phi_{k}(c) \to \infty$ as $c \to \infty$.
For any $k \ge 1$, let
\begin{equation}
\label{eqn:ck-definition}
c(k) \ \ \defi \ \ \sup\{c \; : \; c \ge t_{1}, \; \Phi_{k}(c) \le k\}.
\end{equation}
(The supremum above is attainable for all $k \ge 1$ because $\Phi_{k}(t_{1}) = 0$ and $\Phi_{k}$ is continuous and increasing and $\Phi_{k}(c) \to \infty$ as $c \to \infty$.)
Thus, $c(k)$ is the water level reached after pouring $k$ units of water into the buckets.
Let
\begin{equation}
\label{eqn:beta-water-filling}
\beta_{j}^* \ \ \defi \ \ \min\big\{(c(k)-t_{j})_+, \, u_{j}-t_{j}\big\},
\qquad j=1,\ldots,d.
\end{equation}
The vector $\bm{\beta}^*$ is obtained by raising the levels in the buckets with the least initial amounts of water in lockstep until the budget~$k$ is exhausted.
Note that, given the eigenvalues $t_{1},\ldots,t_{d}$ in the sorted order, $c(k)$ and $\bm{\beta}^*$ can be computed in linear time.

\begin{proposition}
\label{prop:water-filling-monotone}
Suppose that $f$ is a symmetric, convex, and non-increasing function.
Then the vector $\bm{\beta}^*$ defined in \eqref{eqn:beta-water-filling} is an optimal solution of \eqref{eqn:box-relaxation}, and hence of \eqref{eqn:relaxation-general}.
\end{proposition}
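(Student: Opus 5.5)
Let $\bm{\lambda}^* \defi \bm{t}+\bm{\beta}^*$. The plan is to reduce the problem, via monotonicity, to feasible points that spend the whole budget, and then show that $\bm{\lambda}^*$ is majorized by every such point, so that \Cref{lem:majorization-schur-convexity} applies.

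\emph{Feasibility of $\bm{\beta}^*$.} By construction $0\le\beta_j^*\le u_j-t_j$. Since $\Phi_k$ is continuous and increasing with $\Phi_k(t_1)=0$, we have $\Phi_k(c(k))=k$, so $\sum_j\beta^*_j=k$.

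\emph{Reduction to full budget.} Let $\bm{\beta}$ be any feasible point with $\sum_j\beta_j<k$. The last $\hat d$ buckets have infinite capacity, so we can add the slack $k-\sum_j\beta_j$ to $\beta_d$ and stay feasible. Because $f$ is non-increasing, this does not increase the objective. Hence it suffices to show $f(\bm{\lambda}^*)\le f(\bm{t}+\bm{\beta})$ for every feasible $\bm{\beta}$ with $\sum_j\beta_j=k$, and by \Cref{lem:majorization-schur-convexity} it suffices to show $\bm{\lambda}^*\preceq\bm{t}+\bm{\beta}$ for all such $\bm{\beta}$. The sums agree, both being $\sum_j t_j+k$. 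So the task is to show that for every $m$ the sum of the $m$ smallest entries of $\bm{\lambda}^*$ is at least the sum of the $m$ smallest entries of $\bm{\lambda}\defi\bm{t}+\bm{\beta}$; this is equivalent to the top-$m$ sums condition in \eqref{eqn:majorization-definition}.

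\emph{Structure of $\bm{\lambda}^*$.} Write $c=c(k)$, and use $u_j=t_{j+\hat d}\ge t_j$ together with the sorting of $\bm{t}$. Then $\lambda^*_j=\min\{\max(c,t_j),u_j\}$, which gives three kinds of coordinates: capped ones with $\lambda^*_j=u_j<c$, filled ones with $\lambda^*_j=c$, and untouched ones with $\lambda^*_j=t_j>c$. Let $S$ be any set of $m$ indices. For $j\in S$ we have $\lambda_j\ge t_j$. The $m$ smallest entries of $\bm{\lambda}$ sum to $\min_{|S|=m}\sum_{j\in S}\lambda_j$, so it suffices to bound each $\sum_{j\in S}\lambda_j$ from below.

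\emph{Main obstacle: handling the capped coordinates.} A capped $j$ contributes $t_{j+\hat d}$ in $\bm{\lambda}^*$, which can exceed an individual $\lambda_j$. The rescue is an interlacing-type bound: any $m$ entries of $\bm{\lambda}$ satisfy $\lambda_{(i)}\le t_{i+\hat d}$ for the $i$-th smallest. This bounds from above how much the smallest entries of $\bm{\lambda}$ can be raised, which favours the direction we need.

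To prove the $m$-th inequality, I would argue by contradiction, or by exchange. If $\bm{\lambda}$ has $m$ smallest entries with a larger sum than those of $\bm{\lambda}^*$, then $\bm{\lambda}$ raises its low coordinates above $\min(c,u_j)$ more than $\bm{\lambda}^*$ does. The water poured below level $c$ into the low buckets is at most $\sum_j\min\{(c-t_j)_+,u_j-t_j\}=k$, and $\bm{\lambda}^*$ already uses exactly this amount efficiently, so this is impossible.

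Concretely, I would show $\sum_{i\le m}\lambda^\uparrow_{(i)}\le\sum_{i\le m}\lambda^{*\uparrow}_{(i)}$ by splitting into two cases.
\begin{itemize}
\item If the $m$ smallest entries of $\bm{\lambda}^*$ are all $\le c$, they are $t_j$-values raised to $\min(c,u_j)$. I would compare them with $\bm{\lambda}$ via a water-filling argument: entries of $\bm{\lambda}$ below $c$ are at most $u$-capped, and the budget forces the average to be at most $c$.
\item Otherwise, the untouched entries $t_j>c$ coincide in both vectors up to rearrangement, and the bound reduces to the first case.
\end{itemize}
The careful bookkeeping in the first case, using the capacities together with sortedness, is where I expect the work to lie.
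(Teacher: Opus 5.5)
\textbf{There is a genuine gap: the central majorization step is never proved.} Your reduction to the full-budget case and your plan to finish with \Cref{lem:majorization-schur-convexity} match the paper. What is missing is the proof that $\bm{\lambda}^*\preceq\bm{t}+\bm{\beta}$ for every feasible $\bm{\beta}$ with $\sum_j\beta_j=k$, and the sketch you give for it contains three errors.
\begin{enumerate}
\item \emph{Wrong direction.} You need $\min_{|S|=m}\sum_{j\in S}\lambda_j\le\sum_{i\le m}\lambda^{*\uparrow}_{(i)}$. Bounding \emph{each} $\sum_{j\in S}\lambda_j$ from below therefore does not help; you need an upper bound for one well-chosen $S$. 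Relatedly, your interlacing bound $\lambda_{(i)}\le t_{i+\hat d}$ holds for the order statistics of the whole vector $\bm{\lambda}$, not of an arbitrary set of $m$ entries.
\item \emph{False claim in your second case.} The untouched entries $t_j>c$ do not ``coincide in both vectors up to rearrangement.'' The vector $\bm{\lambda}$ may raise those coordinates; your own slack step puts extra mass on $\beta_d$.
\item \emph{No argument in your first case.} ``The budget forces the average to be at most $c$'' does not control partial sums of the smallest entries unless you also use $\lambda_j\ge t_j$ on the top coordinates. The contradiction paragraph about water ``below level $c$'' is heuristic, because the $m$ smallest entries of $\bm{\lambda}$ need not sit on the same indices as those of $\bm{\lambda}^*$.
\end{enumerate}

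\textbf{How the paper closes the gap.} It uses a pairwise exchange. For $\bm{y}\neq\bm{y}^*$ in $\mathcal{Y}_k$, pick $p$ with $y_p<y^*_p$ and $q$ with $y_q>y^*_q$. The form $y^*_j=\min\{\max\{c,t_j\},u_j\}$ forces $y^*_p\le c\le y^*_q$. Moving $\varepsilon=\min\{y^*_p-y_p,\,y_q-y^*_q\}$ from $q$ to $p$ stays in $\mathcal{Y}_k$. It is a transfer from a larger to a smaller coordinate that does not overshoot, so it decreases in the majorization order, and it fixes one more coordinate. Finitely many steps reach $\bm{y}^*$.

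\textbf{How your partial-sum route could be completed.} Note first that $\bm{\lambda}^*$ is already sorted, since it is a min of two non-decreasing sequences. Split the indices into capped ($j\le a$, $\lambda^*_j=u_j$), filled ($a<j\le b$, $\lambda^*_j=c$) and untouched ($j>b$, $\lambda^*_j=t_j$). Let $L_m$ and $L^*_m$ be the sums of the $m$ smallest entries of $\bm{\lambda}$ and $\bm{\lambda}^*$.
\begin{itemize}
\item For $m\le a$: the interlacing bound gives $L_m\le\sum_{j\le m}u_j=L^*_m$.
\item For $m\ge b$: use $\lambda_j\ge t_j$ for $j>m$ and the equal totals to get $L_m\le\sum_j\lambda_j-\sum_{j>m}t_j=L^*_m$.
\item For $a<m<b$: $m\mapsto L_m$ is convex, while $L^*_m$ is affine with slope $c$ on $[a,b]$, so the two endpoint inequalities give the rest.
\end{itemize}
As written, your proposal stops exactly where this work begins.
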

\begin{proof}
It follows from $f$ being non-increasing and $u_{d} = \infty$, that we can restrict attention to feasible $\bm{\beta}$ that satisfies $\sum_{j=1}^{d} \beta_{j} = k$.
Then the set of values of $\bm{t} + \bm{\beta}$ for such $\bm{\beta}$ equals 
\[
\mathcal{Y}_{k} \ \ \defi \ \ \left\{\bm{y} \in \mathbb{R}^{d} \; : \; t_{j} \le y_{j} \le u_{j} \ \forall \ j=1,\ldots,d, \ \sum_{j=1}^{d} y_{j} = \sum_{j=1}^{d} t_{j} + k\right\}.
\]
Consider $\bm{y}^*$ given by ${y}_{j}^* = t_{j} + \beta^*_{j}$ for each $j$.
Note that $\bm{y}^* \in \mathcal{Y}_{k}$.
We will show that
\begin{equation}
\label{cl:ordering}
\bm{y}^* \ \preceq \ \bm{y} \quad \forall \ \bm{y} \in \mathcal{Y}_{k}.
\end{equation}
It follows from \Cref{lem:majorization-schur-convexity} that $f$ is Schur-convex.
Thus it follows from~\eqref{cl:ordering} that $f(\bm{y}^*) \le f(\bm{y})$ for all $\bm{y} \in \mathcal{Y}_{k}$.
Consider any feasible $\bm{\beta}$ that satisfies $\sum_{j=1}^{d} \beta_{j} = k$, let $\bm{y} = \bm{t} + \bm{\beta}$, and observe that $\bm{y} \in \mathcal{Y}_{k}$.
Then $f(\bm{t} + \bm{\beta}^*) = f(\bm{y}^*) \le f(\bm{y}) = f(\bm{t} + \bm{\beta})$, showing that $\bm{\beta}^*$ is optimal.

Next, we show \eqref{cl:ordering}.
Consider any $\bm{y} \in\mathcal{Y}_{k} \setminus \bm{y}^*$. 
Since $\sum_{j=1}^{d} y_{j} = \sum_{j=1}^{d} y_{j}^*$, there exist indices $p,q$ such that $y_{p} < y_{p}^*$ and $y_{q} > y_{q}^*$.
Note that $\bm{y}^*$ has the form $y_{j}^* = \min\{\max\{c(k),t_{j}\},u_{j}\}$ for all~$j$.
It follows from $t_{p} \le y_{p} < y_{p}^*$ that $y_{p}^* \le c(k)$.
Similarly, it follows from $y_{q}^* < y_{q} \le u_{q}$ that $y_{q}^* \ge c(k)$.
Therefore, $y_{p} < y_{p}^* \le c(k) \le y_{q}^* < y_{q}$.
Choose $\varepsilon = \min\{y_{p}^*-y_{p},\; y_{q}-y_{q}^*\} > 0$,
and consider $\bm{y}'$ given by $y'_{p} = y_{p} + \varepsilon$, \, $y'_{q} = y_{q} - \varepsilon$, \, and $y'_{j} = y_{j}$ for all $j \notin \{p,q\}$.
Then $t_{p} \le y_{p} < y'_{p} \le y_{p}^* \le u_{p}$, \, $t_{q} \le y_{q}^* \le y'_{q} < y_{q} \le u_{q}$, \, and $\sum_{j=1}^{d} y'_{j} = \sum_{j=1}^{d} y_{j}$, and thus $\bm{y}' \in \mathcal{Y}_{k}$.
Note that $y_{p} < y'_{p} \le y_{p}^* \le c(k) \le y_{q}^* \le y'_{q} < y_{q}$, and thus $\bm{y}' \preceq \bm{y}$.

It follows from the choice of $\varepsilon$ that $y'_{p} = y_{p}^*$ or $y'_{q} = y_{q}^*$.
Repeating this exchange finitely many times yields $\bm{y}^*$ from $\bm{y}$, and transitivity gives $\bm{y}^* \preceq \bm{y}$.
This establishes \eqref{cl:ordering}.
\end{proof}

\newcommand{\waterstateKTwo}[2]{%
\begin{tikzpicture}[x=0.93cm,y=1.10cm,baseline=(current bounding box.north)]
    \node[font=\small] at (2.25,3.70) {#1};
    \foreach \t/\ell/\u/\hascap/\caplabel [count=\i] in {#2} {
        \pgfmathsetmacro{\x}{\i-1}
        \draw[thick] (\x,0) rectangle (\x+0.68,3.35);
        \fill[gray!30] (\x,0) rectangle (\x+0.68,\t);
        \fill[cyan!35!newblue!45] (\x,\t) rectangle (\x+0.68,\ell);
\ifnum\hascap=1
    \draw[neworange,dashed,very thick] (\x,\u) -- (\x+0.68,\u);

    \if\relax\detokenize{\caplabel}\relax
    \else
        \node[
            font=\scriptsize,
            anchor=south,
            neworange!70!black,
            fill=white,
            fill opacity=0.9,
            text opacity=1,
            inner sep=1pt,
            yshift=3pt
        ] at (\x+0.34,\u) {$\caplabel$};
    \fi
\fi
        \draw[cyan!35!newblue!45,thin] (\x,\ell) -- (\x+0.68,\ell);
        \node[below,font=\scriptsize] at (\x+0.34,0) {$\i$};
    }
    \draw[->] (-0.38,0) -- (-0.38,3.50);
    \node[rotate=90,font=\scriptsize] at (-0.75,1.75) {level};
\end{tikzpicture}%
}

\begin{figure}[t]
\centering

\begin{subfigure}{0.47\textwidth}
\centering
\waterstateKTwo{Initial}{
1.0/1.0/1.1/1/{u_1=t_3},
1.1/1.1/1.3/1/{u_2=t_4},
1.1/1.1/3.0/1/{u_3=t_5},
1.3/1.3/3.3/0/{},
3.0/3.0/3.3/0/{} }
\caption{Initial levels $\bm t=(1.0,1.1,1.1,1.3,3.0)^\top$.}
\end{subfigure}
\hfill
\begin{subfigure}{0.47\textwidth}
\centering
\waterstateKTwo{After $s=0.1$}{
1.0/1.1/1.1/1/{},
1.1/1.1/1.3/1/{},
1.1/1.1/3.0/1/{},
1.3/1.3/3.3/0/{},
3.0/3.0/3.3/0/{} }
\caption{$c=1.1$ and bucket $1$ reaches its cap $u_1=t_3$.}
\end{subfigure}

\vspace{1ex}

\begin{subfigure}{0.47\textwidth}
\centering
\waterstateKTwo{After $s=0.5$}{
1.0/1.1/1.1/1/{},
1.1/1.3/1.3/1/{},
1.1/1.3/3.0/1/{},
1.3/1.3/3.3/0/{},
3.0/3.0/3.3/0/{} }
\caption{$c=1.3$ and bucket $2$ reaches its cap $u_2=t_4$.}
\end{subfigure}
\hfill
\begin{subfigure}{0.47\textwidth}
\centering
\waterstateKTwo{Final: $s=2$}{
1.0/1.1/1.1/1/{},
1.1/1.3/1.3/1/{},
1.1/2.05/3.0/1/{},
1.3/2.05/3.3/0/{},
3.0/3.0/3.3/0/{} }
\caption{$c(2)=2.05$ and $\bm\beta^*=(0.1,0.2,0.95,0.75,0)^\top$.}
\end{subfigure}

\vspace{1ex}

\begin{tikzpicture}
    \fill[gray!30] (0,0) rectangle (0.4,0.25);
    \node[right] at (0.4,0.125) {\scriptsize initial level $t_j$};

    \fill[cyan!35!newblue!45] (3.1,0) rectangle (3.5,0.25);
    \node[right] at (3.5,0.125) {\scriptsize added water $\beta_j$};

    \draw[neworange,dashed,very thick] (6.0,0.125) -- (6.5,0.125);
    \node[right] at (6.5,0.125) {\scriptsize Weyl cap $u_j=t_{j+2}$};
\end{tikzpicture}

\caption{
Water filling for \Cref{example1}. For $k=2$, the finite upper bound for
bucket $j$ is the higher eigenvalue $u_j=t_{j+2}$. Thus the first three buckets
have caps $u_1=t_3=1.1$, $u_2=t_4=1.3$, and $u_3=t_5=3.0$, while the last two
buckets have no finite Weyl cap. Here, we let $s$ denote the amount of water usage, subject to the total water budget of $2$.
}
\label{fig:water-filling-k2}
\end{figure}

The following example illustrates the result of \Cref{prop:water-filling-monotone}.
\begin{example}
\label{example1}

We consider a symmetric, convex, 
nonincreasing function $f$. Let $k=2$, $d=5$, and
$
    \bm t=(1.0,1.1,1.1,1.3,3.0)^\top.
$
Then $\hat d=2$ and
\[
    \bm u=(t_3,t_4,t_5,\infty,\infty)^\top
    =(1.1,1.3,3.0,\infty,\infty)^\top.
\]
Thus the first three buckets have finite Weyl caps
$u_1=t_3=1.1$, $u_2=t_4=1.3$, and $u_3=t_5=3.0$, while the last two buckets
have an infinite cap. This example is illustrated in \Cref{fig:water-filling-k2}.

For this example, the water increments, as a function of the water level $c$, are given by
\[
\begin{aligned}
\beta_1&=
\begin{cases}
 c-1.0, & c\in[1.0,1.1],\\
 0.1, & c\in[1.1,\infty),
\end{cases}
&
\beta_2&=
\begin{cases}
0, & c\in[1.0,1.1],\\
c-1.1, & c\in[1.1,1.3],\\
0.2, & c\in[1.3,\infty),
\end{cases}\\[2mm]
\beta_3&=
\begin{cases}
0, & c\in[1.0,1.1],\\
c-1.1, & c\in[1.1,3.0],\\
1.9, & c\in[3.0,\infty),
\end{cases}
&
\beta_4&=
\begin{cases}
0, & c\in[1.0,1.3],\\
c-1.3, & c\in[1.3,\infty),
\end{cases}\\[2mm]
\beta_5&=
\begin{cases}
0, & c\in[1.0,3.0],\\
c-3.0, & c\in[3.0,\infty).
\end{cases}
\end{aligned}
\]
Thus, the total amount of water used by level $c$ is
\[
    s\defi\sum_{j=1}^5 \beta_j
    =
    \begin{cases}
    c-1.0, & c\in[1.0,1.1],\\
    2c-2.1, & c\in[1.1,\infty),
    \end{cases}
\]
Hence, for the budget $k=2$, the final water level $c$ is given by $c=2.05,$
and the water-filling solution is
\[
    \bm\beta^*=(0.1,0.2,0.95,0.75,0)^\top,
    \qquad
    \bm t+\bm\beta^*=(1.1,1.3,2.05,2.05,3.0)^\top.
\]
\end{example}

\subsection{An Optimal Solution to the Lower Bound Problem: General Case}

Now, we consider the case when $f$ is not necessarily non-increasing.
In this case, the optimal $\bm{\beta}$ is again given by the same water-filling argument, but it may use a total budget of $s \leq k$.
We generalize the definitions from the non-increasing case to allow a chosen amount $s$ of water to be filled, not necessarily equal to $k$.
For any $s \geq 0$, let
\begin{equation}
\label{eqn:ck-definition2}
c(s) \ \ \defi \ \ \inf\{c \; : \; c \ge t_{1}, \; \Phi_{k}(c) \ge s\}.
\end{equation}
In words, $c(s)$ is the water level reached after pouring $s$ units of water, in lockstep, into the buckets with the lowest initial water levels.
Let
\begin{equation}
\label{eqn:beta-water-filling2}
\beta_{j}(s) \ \ \defi \ \ \min\big\{(c(s) - t_{j})_+, \, u_{j} - t_{j}\big\},
\qquad j=1,\ldots,d.
\end{equation}
That is, the vector $\bm{\beta}(s)$ is obtained by raising the smallest non-full buckets in lockstep until the budget used is exactly $s$.

\begin{lemma}
\label{lem:general}
For any symmetric convex $f : \mathbb{R}^{d} \rightarrow \mathbb{R} \cup \{\infty\}$, and non-negative reals $t_{1},\ldots,t_{d}$ such that $0 \leq t_{1} \leq \cdots \leq t_{d}$, if the problem~\eqref{eqn:relaxation-general} has an optimal solution, then there exists $0 \leq s^* \leq k$ such that $\bm{\beta}(s^*)$ as defined in \eqref{eqn:beta-water-filling2} is an optimal solution to \eqref{eqn:relaxation-general}.
\end{lemma}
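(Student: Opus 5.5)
Let $\bm{\beta}^\circ$ be an optimal solution of \eqref{eqn:relaxation-general}, and set $s^* \defi \sum_{j=1}^{d} \beta^\circ_j \le k$. The plan is to show that $\bm{\beta}(s^*)$ is feasible and that $\bm{t}+\bm{\beta}(s^*)$ is majorized by $\bm{t}+\bm{\beta}^\circ$. \Cref{lem:majorization-schur-convexity} then gives $f(\bm{t}+\bm{\beta}(s^*)) \le f(\bm{t}+\bm{\beta}^\circ)$, so $\bm{\beta}(s^*)$ is optimal as well. In short, we freeze the total amount of water used by some optimal solution and then redistribute it by water filling.

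\textbf{Step 1 (feasibility).} From the definition, $0 \le \beta_j(s) \le u_j - t_j$. Because $\Phi_k$ is continuous and nondecreasing with $\Phi_k(t_1)=0$ and $\Phi_k(c)\to\infty$, the infimum in \eqref{eqn:ck-definition2} is attained and satisfies $\Phi_k(c(s)) = s$. Hence $\sum_j \beta_j(s) = \Phi_k(c(s)) = s$. Taking $s = s^*\le k$, the budget constraint holds. The Weyl caps are exactly the box constraints $\beta_j \le u_j - t_j$ from \eqref{eqn:box-relaxation}, so $\bm{\beta}(s^*)$ is feasible.

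\textbf{Step 2 (majorization).} Define
\[
\mathcal{Y}(s^*) \defi \Bigl\{\bm{y} : t_j\le y_j\le u_j \ \forall j,\ \textstyle\sum_j y_j = \sum_j t_j + s^*\Bigr\}.
\]
Both $\bm{y}^\circ = \bm{t}+\bm{\beta}^\circ$ and $\bm{y}^* = \bm{t}+\bm{\beta}(s^*)$ lie in $\mathcal{Y}(s^*)$. Moreover, $y^*_j = \min\{\max\{c(s^*),t_j\},u_j\}$.

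From here we repeat verbatim the exchange argument in the proof of \Cref{prop:water-filling-monotone}, with $k$ replaced by $s^*$ and $c(k)$ by $c(s^*)$. For any $\bm{y} \in \mathcal{Y}(s^*)$ with $\bm{y} \neq \bm{y}^*$, there exist indices $p,q$ with $y_p < y^*_p \le c(s^*) \le y^*_q < y_q$. A Robin Hood transfer of size $\min\{y^*_p-y_p,\, y_q-y^*_q\}$ then stays in $\mathcal{Y}(s^*)$, produces a vector majorized by $\bm{y}$, and fixes at least one further coordinate to agree with $\bm{y}^*$. After finitely many transfers we reach $\bm{y}^*$, and transitivity gives $\bm{y}^* \preceq \bm{y}^\circ$.

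Monotonicity of $f$ is never used here. In \Cref{prop:water-filling-monotone} it served only to force the total to equal $k$, and that role is now played by choosing $s^*$ to be the total of an optimal solution.

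\textbf{Main obstacle.} The step needing care is the extended-real setting. The conclusion $f(\bm{y}^*)\le f(\bm{y}^\circ)$ follows from the footnote to \Cref{lem:majorization-schur-convexity}, since $f(\bm{y}^\circ)$ is the optimal value and is finite or $+\infty$. If the optimal value is $+\infty$, every feasible point is optimal and the claim is trivial.

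A second check is the degenerate case $s^*=0$. There $c(0)=t_1$ and $\bm{\beta}(0)=\bm{0}$, which is consistent with the general argument.

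Finally, the exchange step must respect the caps. It does: $y^*_q < y_q \le u_q$ forces $y^*_q = \max\{c(s^*),t_q\} \ge c(s^*)$, and $y_p \ge t_p$ together with $y_p<y_p^*$ forces $y^*_p = \min\{c(s^*),u_p\}\le c(s^*)$. This is exactly as in the non-increasing case.
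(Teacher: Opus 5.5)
Your proposal is correct and takes the paper's route: fix the total $s^*$ of an optimal solution, place both that solution and $\bm{t}+\bm{\beta}(s^*)$ in $\mathcal{Y}_{s^*}$, reuse the exchange argument from \Cref{prop:water-filling-monotone} to get $\bm{t}+\bm{\beta}(s^*)\preceq\bm{t}+\hat{\bm{\beta}}$, and conclude by \Cref{lem:majorization-schur-convexity}. Your check that $\Phi_k(c(s^*))=s^*$, which makes $\bm{\beta}(s^*)$ feasible with the right total, and your treatment of the $+\infty$ case supply details the paper leaves implicit.
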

\begin{proof}
Let $\bm{\hat{\beta}}$ be an optimal solution to \eqref{eqn:relaxation-general}. Let $s^*=\sum_{i=1}^{d} \hat{\beta_{i}}$. Then consider the following optimization problem where we restrict the total budget to exactly $s^*$.
\begin{equation}
\label{eqn:restricted-general}
\min_{\bm{\beta} \in \mathbb{R}_+^{d}} \left\{f(t_{1} + \beta_{1},\ldots,t_{d} + \beta_{d})   : t_{j} + \beta_{j} \le t_{j+\hat{d}} \  \forall \, j=1,\ldots,d-\hat{d}, \sum_{j=1}^{d} \beta_{j} =s^*\right\}.
\tag{Restricted-Eigen}
\end{equation}

Since $\hat{\bm{\beta}}$ is feasible  and optimal for
problem~\eqref{eqn:relaxation-general}, the optimal value of
problem~\eqref{eqn:restricted-general} is no larger than the optimal
value of problem~\eqref{eqn:relaxation-general}.

Now,  as in the proof of \Cref{prop:water-filling-monotone}, we let 
\[
\mathcal{Y}_{s^*}
\defi
\left\{
\bm{y}\in\mathbb{R}^{d}:
 t_{j}\le y_{j}\le u_{j}\ \forall j = 1, \ldots, d,
 \sum_{j=1}^{d} y_{j}=\sum_{j=1}^{d} t_{j}+s^*
\right\},
\]
and let $\bm{\beta}(s^*)$ denote the solution output by the water-filling algorithm described in \Cref{sec_opt_lb:monotone} with the same budget as $\bm{\hat{y}}$. Then let $\bm{y}^*\defi\bm{t}+\bm{\beta}(s^*)$ and $\bm{\hat{y}}\defi\bm{t}+\bm{\hat{\beta}}$. Clearly, both $\bm{y}^*,\bm{\hat{y}}\in \mathcal{Y}_{s^*}$. Moreover, as in proof of \Cref{prop:water-filling-monotone},  $\bm{y^*}\preceq \bm{\hat{y}}$ since $\bm{y^*}$ is obtained by water-filling algorithm with the same budget as $\bm{\hat{y}}$. Now, from \Cref{lem:majorization-schur-convexity}, we have $f(\bm{y}^*)\leq f(\bm{\hat{y}})$ proving optimality of $\bm{y}^*$ and thus of $\bm{\beta}(s^*).$
\end{proof}

Now, let $g:\mathbb{R}_+\rightarrow \mathbb{R}\cup \{\infty\}$ be defined as $$g(s):=\min_{\bm{\beta} \in \mathbb{R}_+^{d}} \left\{f(t_{1} + \beta_{1},\ldots,t_{d} + \beta_{d}) \  : \  t_{j} + \beta_{j} \le t_{j+\hat{d}} \  \forall \, j=1,\ldots,d-\hat{d}, \ \sum_{j=1}^{d} \beta_{j} =s\right\}.$$
A simple observation shows that $g$ is convex (see, e.g., \cite[Sec.~3.2.5]{boyd2004convex}). Thus, we can employ a bisection or golden section search to efficiently compute the optimal $s^*$ whose existence is shown in \Cref{lem:general}. 
We remark that for any fixed $s$, the computation of $g(s)$ and corresponding $\bm{\beta}(s)$ is done via the water-filling algorithm. Thus, given the sorted eigenvalues $t_{1},\ldots, t_{d}$ of $\bm{A}$, we can obtain $0\leq \hat{{s}}\leq k$ such that $|\hat{{s}}-{s}^*|\leq \epsilon/L$ in $\log(k L/\epsilon)$ iterations where $L$ is the Lipschitz constant of $f$. In each iteration, we compute $\bm{\beta}(s)$ and evaluate $f(\bm{\beta}(s)+\bm{t})$ at the current value of $s$ in the bisection search which can be done in $O(d)$ time. Finally, $f(\bm{\beta(\hat{s}})+\bm{t})\leq f(\bm{\beta}(s^*)+\bm{t})+\epsilon$ as claimed.

Finally, we present sufficient conditions under which the problem~\eqref{eqn:relaxation-general} has an optimal solution and show that there is always an optimal solution under mild conditions on $f$ as long as $k$ is at least the dimension of the null-space of $A$, 
that is, the number of zero elements in $\bm{t}$.
Let $\|\bm{t}\|_{0}$ denote the support size of vector $\bm{t}$.

\begin{proposition}
\label{prop:finiteness_pd_domain}
Suppose that $f$ is symmetric,
lower semicontinuous, convex, and finite on $\mathbb{R}^{d}_{++}$. 
If
$k \ge d-\|\bm{t}\|_{0}$,  then the problem~\eqref{eqn:relaxation-general} has a finite optimal value and it is attained.
\end{proposition}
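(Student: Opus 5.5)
Our plan is the standard argument for minimizing a lower semicontinuous function over a compact set: show the feasible set of \eqref{eqn:relaxation-general} is nonempty and compact, and exhibit one feasible point with finite objective value.

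\emph{Compactness.} The feasible set of \eqref{eqn:relaxation-general} is
\[
\Big\{\bm{\beta} \in \mathbb{R}^{d}_{+} \;:\; \beta_{j} \le t_{j+\hat{d}} - t_{j} \ \ \forall\, j=1,\ldots,d-\hat{d}, \ \ \textstyle\sum_{j=1}^{d} \beta_{j} \le k\Big\}.
\]
It is closed, since it is cut out by finitely many non-strict linear inequalities. It is bounded, since $0 \le \beta_{j} \le k$ for every $j$. It is nonempty, since $\bm{\beta} = \bm{0}$ belongs to it. Because $\bm{\beta} \mapsto f(\bm{t}+\bm{\beta})$ is lower semicontinuous, Weierstrass' theorem shows that the infimum is attained and is not $-\infty$; the value $+\infty$ is excluded as long as some feasible point has finite value.

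\emph{A feasible point with finite value.} Since $f$ is finite on $\mathbb{R}^{d}_{++}$, it suffices to find a feasible $\bm{\beta}$ with $\bm{t}+\bm{\beta} > \bm{0}$ componentwise. Let $z \defi d-\|\bm{t}\|_{0}$; because $\bm{t}$ is sorted, the zero entries are exactly $t_{1},\ldots,t_{z}$. Set $\beta_{j} = \eta$ for $j \le z$ and $\beta_{j} = 0$ otherwise, where $\eta > 0$ is small. Then $t_{j}+\beta_{j} > 0$ for every $j$, and $\sum_{j} \beta_{j} = z\eta \le k$ once $\eta \le k/z$ (this holds automatically if $z = 0$, since then $\bm{\beta} = \bm{0}$ works).

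\emph{The main obstacle: the Weyl caps.} It remains to check $t_{j} + \beta_{j} \le t_{j+\hat{d}}$ for $j \le d-\hat{d}$. This only matters for $j \le z$, where it reads $\eta \le t_{j+\hat{d}}$, so we need $j+\hat{d} > z$, i.e.\ $t_{j+\hat{d}} > 0$, for all such $j$. The worst case is $j = 1$, which requires $1+\hat{d} > z$, i.e.\ $\hat{d} \ge z$. This is exactly where the hypothesis enters: if $k \ge d$ the caps are vacuous, and otherwise $\hat{d} = k \ge z$. Taking
\[
\eta \;=\; \min\Big\{\,k/z,\ \min\{t_{i} : t_{i} > 0\}\Big\}
\]
(or any positive number if no $t_{i}$ is positive, which can only occur when $k \ge d$ and the caps are vacuous) makes $\bm{\beta}$ feasible with a finite objective value. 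Combined with compactness and lower semicontinuity, this shows that \eqref{eqn:relaxation-general} has a finite optimal value that is attained.
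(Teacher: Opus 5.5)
Your proposal is correct and matches the paper's proof step for step. You raise the $r=d-\|\bm{t}\|_0$ zero eigenvalues by a small $\eta>0$, use $\hat{d}\ge r$ to guarantee $t_{j+\hat{d}}>0$ on every active Weyl cap, and conclude from compactness of the feasible set and lower semicontinuity. The only real difference is that you give $\eta$ explicitly, and in the degenerate case $\bm{t}=\bm{0}$ you should replace only the inner minimum by a positive number, so that $\eta\le k/d$ still holds for the budget constraint.
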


\begin{proof}
Let $r\coloneqq d-\|\bm{t}\|_{0}$. Since $0\le t_{1}\le \cdots \le t_{d}$, we have $t_{1}=\cdots=t_{r}=0$ and $t_{r+1}>0,\ldots,t_{d}>0$ when $r\ge1$.

Suppose that $k\ge r$. If $r=0$, then $\bm{t}\in\mathbb{R}_{++}^{d}$, and $\bm{\beta}=\bm{0}$ is feasible with finite objective value. Now suppose $r\ge1$. Let $\hat{d}=\min\{d,k\}$. Since $k\ge r$, we have $\hat{d}\ge r$. We construct a feasible $\bm{\beta}$ such that $\bm{t}+\bm{\beta}\in\mathbb{R}_{++}^{d}$.

Choose $\epsilon>0$ small enough so that
$
r\epsilon\le k
$
and, for every $j\in\{1,\ldots,r\}\cap\{1,\ldots,d-\hat{d}\}$,
$
\epsilon\le t_{j+\hat{d}}.
$
Such an $\epsilon$ exists because $j+\hat{d}\ge j+r>r$ for all $j=1,\ldots,r$ whenever $j\le d-\hat{d}$, and hence $t_{j+\hat{d}}>0$. Define
$
\beta_{j} =
\begin{cases}
\epsilon, & j=1,\ldots,r,\\
0, & j=r+1,\ldots,d.
\end{cases}
$
Then $\bm{\beta}\in\mathbb{R}_+^{d}$ and $\sum_{j=1}^{d}\beta_{j}=r\epsilon\le k$. Moreover, for $j=1,\ldots,d-\hat{d}$, the constraint $t_{j}+\beta_{j}\le t_{j+\hat{d}}$ holds. Indeed, if $j\le r$, then $t_{j}=0$ and $\beta_{j}=\epsilon\le t_{j+\hat{d}}$ by the choice of $\epsilon$; if $j>r$, then $\beta_{j}=0$ and $t_{j}\le t_{j+\hat{d}}$ because $\bm{t}$ is non-decreasing. Therefore, $\bm{\beta}$ is feasible for~\eqref{eqn:relaxation-general}. Since $\bm{t}+\bm{\beta}\in\mathbb{R}_{++}^{d}$, the objective value $f(\bm{t}+\bm{\beta})$ is finite.

It remains to show attainment. The feasible set of~\eqref{eqn:relaxation-general} is closed and bounded because $\bm{\beta}\in\mathbb{R}_+^{d}$ and $\sum_{j=1}^{d}\beta_{j}\le k$; hence it is compact. Since $f$ is lower semicontinuous on the compact feasible set, the objective $\bm{\beta}\mapsto f(\bm{t}+\bm{\beta})$ is lower semicontinuous on the compact feasible set. Since there is at least one feasible point with finite objective value, the Weierstrass theorem for lower semicontinuous functions implies that the infimum is attained by some feasible $\bm{\beta}^*$. Thus, problem~\eqref{eqn:relaxation-general} has a finite optimal value and an optimal solution.
\end{proof}

The condition $k\ge d-\|\bm t\|_0$ is not only sufficient for many classical design criteria, but also necessary. Suppose, for example, that $f$ is finite on $\mathbb{R}_{++}^{d}$ and infinite on $\mathbb{R}_{+}^{d}\setminus \mathbb{R}_{++}^{d}$, as in A-, D-, and E-optimality. If $k<r\coloneqq d-\|\bm t\|_0$, then $\hat d=k$ and, for every $j=1,\ldots,r-k$, we have $t_j=0$ and $t_{j+\hat d}=t_{j+k}=0$. The constraint $t_j+\beta_j\le t_{j+\hat d}$ therefore forces $\beta_j=0$ for $j=1,\ldots,r-k$. Hence $\bm t+\bm\beta\notin\mathbb{R}_{++}^{d}$ for every feasible $\bm\beta$, and the objective value is infinite for every feasible solution. Thus, in this case, $k\ge d-\|\bm t\|_0$ is necessary and sufficient for problem~\eqref{eqn:relaxation-general} to have a finite optimal value. Equivalently, in the original matrix formulation, if $\dim(\ker(\bm A))>k$, then no rank-$k$ update $\bm X\bm X^\top$ can make $\bm A+\bm X\bm X^\top$ positive definite. Indeed, there exists a nonzero vector in $\ker(\bm A)\cap\ker(\bm X^\top)$, so $\bm A+\bm X\bm X^\top$ remains singular.

\section{Construction of an Optimal Design Matrix}

\Cref{prop:lower_bound_general} implies that for any prior information matrix $\bm{A} = \operatorname{diag}(t_{1}, t_{2},\ldots, t_{d})$ such that $0 \leq t_{1} \leq t_{2} \leq \cdots \leq t_{d}$, and a design matrix $\bm{X} = [\bm{x}^{1},\ldots,\bm{x}^{k}]$ such that $\bm{x}^{i} \in B(\bm{0},1)$ for all $i = 1,\ldots,k$, it holds that
\[
F\left(\bm{A} + \bm{X} \bm{X}^{\top}\right) \geq f(\bm{t} + \bm{\beta}(s^*)),
\]
where $\bm{\beta}(s^*)$ is an optimal solution of problem \eqref{eqn:relaxation-general} provided by \Cref{lem:general} for some $0\leq s^*\leq k$. 
In this section, we show how to find $k$~vectors $\bm{x}^{1}, \ldots, \bm{x}^{k} \in B(\bm{0},1)$ such that $F\left(\bm{A} + \bm{X} \bm{X}^{\top}\right) = f(\bm{t} + \bm{\beta}(s^*))$. The most natural approach is to aim for vectors $\bm{x}^{1}, \ldots, \bm{x}^{k} \in B(\bm{0},1)$ such that
\begin{equation}\label{eqn:construction_claim}
\bm{X} \bm{X}^{\top} = \operatorname{diag}\left(\beta_{1}(s^*),\dots,\beta_{d}(s^*)\right).
\end{equation}
which would imply that $\bm{A} + \bm{X} \bm{X}^{\top} = \operatorname{diag}(t_{1}, t_{2},\ldots, t_{d})+\operatorname{diag}(\beta_{1}(s^*), \beta_{2}(s^*),\ldots, \beta_{d}(s^*))=\operatorname{diag}(t_{1}+\beta_{1}(s^*),\ldots, t_{d}+\beta_{d}(s^*))$ and therefore, $F\left(\bm{A} + \bm{X} \bm{X}^{\top}\right) = f(\bm{t} + \bm{\beta}(s^*)).$

In the following example, we show such vectors $\bm{x}^{1},\ldots, \bm{x}^{k}$ may not exist. 
\begin{example}
\label{example3}
Assume that $f$ is non-increasing and consider the optimal solution $\bm{\beta}^* = \bm{\beta}(2) = (0.1, 0.2, 0.95, 0.75, 0.0)^{\top}$ of \Cref{example1} with $s=k=2$.
Note that $\operatorname{rank}(\operatorname{diag}(\bm{\beta}(2))) = 4 > k \ge \operatorname{rank}\left(\bm{X} \bm{X}^{\top}\right)$, and thus there does not exist a design matrix $\bm{X} = [\bm{x}^{1},\ldots,\bm{x}^{k}]$ such that $\bm{X} \bm{X}^{\top} = \operatorname{diag}(\bm{\beta}^*)$.
\end{example}

Instead, we show the following crucial lemma which shows that there is always an alternate optimal solution $\bm{\beta}'(s)$ whose support is at most $\hat{d}=\min\{d,k\}$.

\begin{lemma}
\label{lem:simp_alpha}
For any $s \in [0,k]$, let $\bm{\beta}(s)$ denote the solution defined by the water-filling algorithm with budget $s$ as in \eqref{eqn:beta-water-filling2}. 
Define $\bm{\beta}'(s)\in\mathbb{R}^{d}$ by
\[
\beta'_{j}(s)
\defi
\begin{cases}
(c(s)-t_{j})_+, & j=1,\ldots,\hat{d},\\
0, & j=\hat{d}+1,\ldots,d.
\end{cases}
\]
Then  $\bm{\beta}'(s)\in \mathbb{R}^{d}$ has support at most $\min\{d,k\}$, and
there exists a permutation matrix $\bm{P}$ such that $\bm{P}(\bm{t}+\bm{\beta}(s))=\bm{t}+\bm{\beta}'(s)$. 
Consequently, for every symmetric function $f$,
$f(\bm{t} + \bm{\beta}(s)) = f(\bm{t} + \bm{\beta}'(s))$.
\end{lemma}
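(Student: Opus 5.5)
The plan is to compare the two vectors $\bm{t}+\bm{\beta}(s)$ and $\bm{t}+\bm{\beta}'(s)$ as multisets. Equality of multisets of entries is exactly the existence of a permutation matrix $\bm{P}$ with $\bm{P}(\bm{t}+\bm{\beta}(s))=\bm{t}+\bm{\beta}'(s)$. The support claim is immediate, since $\beta'_{j}(s)=0$ for $j>\hat{d}$. The last claim, $f(\bm{t}+\bm{\beta}(s))=f(\bm{t}+\bm{\beta}'(s))$, then follows from the symmetry of $f$.

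Write $c=c(s)$ and $y_{j}\defi t_{j}+\beta_{j}(s)=\min\{\max\{c,t_{j}\},u_{j}\}$, where $u_{j}=t_{j+\hat{d}}$ for $j\le d-\hat{d}$ and $u_{j}=\infty$ otherwise. Likewise $y'_{j}\defi t_{j}+\beta'_{j}(s)$ equals $\max\{c,t_{j}\}$ for $j\le\hat{d}$ and $t_{j}$ for $j>\hat{d}$. The key step is to partition $\{1,\ldots,d\}$ into the $\hat{d}$ residue chains $j_{0}<j_{1}<\cdots<j_{r}$ with $j_{i+1}=j_{i}+\hat{d}$. Each chain starts at some $j_{0}\in\{1,\ldots,\hat{d}\}$ and ends at some $j_{r}\in\{d-\hat{d}+1,\ldots,d\}$. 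On each chain the Weyl cap is simply the next element, $u_{j_{i}}=t_{j_{i+1}}$ for $i<r$, and $u_{j_{r}}=\infty$. Since both $\bm{y}$ and $\bm{y}'$ act chain by chain, it suffices to prove multiset equality on each chain separately.

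On a fixed chain, the values $t_{j_{0}}\le t_{j_{1}}\le\cdots\le t_{j_{r}}$ are non-decreasing because $\bm{t}$ is sorted. I would split into cases according to $p\defi\min\{i: t_{j_{i}}\ge c\}$, with $p=r+1$ if no such $i$ exists.
\begin{itemize}
\item For $i<p-1$, we have $t_{j_{i}}<c$ and $t_{j_{i+1}}<c$, so $y_{j_{i}}=t_{j_{i+1}}$.
\item For $i=p-1$ with $p\le r$, we get $y_{j_{i}}=\min\{c,t_{j_{p}}\}=c$.
\item For $i\ge p$, we get $y_{j_{i}}=\min\{t_{j_{i}},t_{j_{i+1}}\}=t_{j_{i}}$.
\item If $p=r+1$, the last entry is uncapped and gives $y_{j_{r}}=c$.
\end{itemize}
Hence, when $p\ge1$, the chain's multiset of $y$-values is the multiset $\{t_{j_{0}},\ldots,t_{j_{r}}\}$ with $t_{j_{0}}$ removed and $c$ inserted. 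When $p=0$, it is unchanged.

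On the other side, $\bm{y}'$ on the same chain changes only the first entry $j_{0}\le\hat{d}$, replacing $t_{j_{0}}$ by $\max\{c,t_{j_{0}}\}$. This is a replacement by $c$ exactly when $t_{j_{0}}<c$, i.e.\ when $p\ge1$. So the two multisets agree on every chain, and hence globally.

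The main obstacle is bookkeeping rather than depth. I need to verify that each chain indeed begins in $\{1,\ldots,\hat{d}\}$ and ends among the uncapped indices, including the case $k\ge d$ where $\hat{d}=d$, all chains are singletons, and $\bm{\beta}=\bm{\beta}'$ trivially. I also need to handle ties $t_{j_{i}}=c$ consistently with the definition of $p$.
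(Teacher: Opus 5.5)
Your proof is correct, but it takes a different route from the paper's. After disposing of $k\ge d$, the paper argues globally. It writes $t_j+\beta_j(s)$ as $t_{j+k}$, $c(s)$, or $t_j$ according to where $c(s)$ falls. If no cap is active, it notes that $\bm{\beta}'(s)=\bm{\beta}(s)$. Otherwise it takes $\ell$ to be the largest index with $t_{\ell+k}\le c(s)$ and shows the entries form three consecutive blocks: $t_{j+k}$ for $j\le\ell$, the value $c(s)$ for $\ell<j\le\ell+k$, and $t_j$ for $j>\ell+k$. Hence the multiset is $k$ copies of $c(s)$ together with $t_{k+1},\ldots,t_d$.

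You instead split $\{1,\ldots,d\}$ into the $\hat d$ residue chains under $j\mapsto j+\hat d$, on which each cap simply points to the next element of the chain. You then check, chain by chain, that water filling replaces the head value $t_{j_0}$ by $c$ exactly when $t_{j_0}<c$, and otherwise leaves the chain unchanged. That is precisely what $\bm{\beta}'(s)$ does.

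What your version buys is uniformity. There is no subcase split, and the case $k\ge d$ (all chains are singletons) and ties $t_{j_i}=c$ are absorbed into the same argument. It needs only monotonicity of $\bm t$ along each chain, whereas the paper compares entries across chains, e.g.\ $t_j\le t_{\ell+k}$. It also exhibits the permutation explicitly as a cyclic shift of $j_0,\ldots,j_{p-1}$ on each chain.

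What the paper's version buys is a compact global picture of the water-filled vector. In the capped case it shows at once that exactly $k$ new values appear, all equal to $c(s)$, alongside the untouched top eigenvalues $t_{k+1},\ldots,t_d$. This is the form used when building $\operatorname{diag}(\bm{\beta}'(s))$ in the construction step.
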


We give a proof of \Cref{lem:simp_alpha} in \Cref{sec:simp_beta}. Here we revisit \Cref{example3} and illustrate the vector $\bm{\beta}'(s)$ for this example.
\begin{example}
    Continuing \Cref{example1}, recall that $\bm{t} = (1.0, 1.1, 1.1, 1.3, 3.0)^{\top}$, note that $\bm{t} + \bm{\beta}(2) = (1.1, 1.3, 2.05, 2.05, 3.0)^{\top}$, and thus $\bm{t} + \bm{\beta}(2)$ contains only $k = 2$ values not in $\bm{t}$.
Except for the change of order, $\bm{t} + \bm{\beta}(2)$ has the same values as $\bm{t} + \bm{\beta}'(2)$ where $\bm{\beta}'(2) = (1.05,0.95,0.0,0.0,0.0)^{\top}$.
It follows from the symmetry of $f$ that $f(\bm{t} + \bm{\beta}'(2)) = f(\bm{t} + \bm{\beta}(2))$.
\end{example}

The next step is to construct vectors  $\bm{x}^{1}, \ldots, \bm{x}^{k} \in B(\bm{0},1)$ such that
\begin{equation*}
\bm{X} \bm{X}^{\top} = \operatorname{diag}(\beta_{1}'(s^*),\dots,\beta_{d}'(s^*)),
\end{equation*}
which would imply that $\bm{A} + \bm{X} \bm{X}^{\top} =\operatorname{diag}(t_{1}+\beta'_{1}(s^*),\ldots, t_{d}+\beta'_{d}(s^*))$ and therefore, $F\left(\bm{A} + \bm{X} \bm{X}^{\top}\right) = f(\bm{t} + \bm{\beta}(s^*)).$

We present the following structural lemma. It characterizes exactly which PSD matrices can be written as a sum of $k$ rank-one matrices generated by vectors in the unit ball. The proof uses the Schur--Horn theorem and also implies \Cref{cor:spectral-design-rank}, which is delayed in \Cref{sec:construction}.
\begin{lemma}
\label{lem:construction}
The feasible set of problem~\eqref{eq:SpectralDesignM} is
\[
\left\{
\bm{M} = \sum_{i=1}^{k} \bm{x}^{i} \left(\bm{x}^{i}\right)^{\top}
: 
\bm{x}^{1},\ldots,\bm{x}^{k} \in B(\bm{0},1)
\right\}
=
\Big\{
\bm{M} \in \mathbb{S}_{d}^+ :
\operatorname{Tr}(\bm{M}) \le k,\;
\operatorname{rank}(\bm{M}) \le k
\Big\}.
\]
\end{lemma}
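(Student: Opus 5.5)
The plan is to prove the two inclusions separately. The inclusion ``$\subseteq$'' is immediate. If $\bm{M} = \bm{X}\bm{X}^{\top}$ with $\bm{X} = [\bm{x}^{1},\ldots,\bm{x}^{k}]$, then $\bm{M} \in \mathbb{S}_{d}^+$. Its rank is at most the number of columns $k$. Its trace is $\sum_{i} \|\bm{x}^{i}\|_2^2 \le k$.

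For ``$\supseteq$'', fix $\bm{M} \in \mathbb{S}_{d}^+$ with $\operatorname{Tr}(\bm{M}) \le k$ and $r \defi \operatorname{rank}(\bm{M}) \le k$. Write the spectral decomposition $\bm{M} = \bm{V}\operatorname{diag}(\mu_1,\ldots,\mu_r)\bm{V}^{\top}$, where $\bm{V} \in \mathbb{R}^{d \times r}$ has orthonormal columns and $\mu_i > 0$. The idea is to produce a $k \times k$ symmetric PSD matrix $\bm{G}$ with the following two properties: its nonzero eigenvalues are $\mu_1,\ldots,\mu_r$, padded with $k-r$ zeros, and all its diagonal entries are at most $1$. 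Given such a $\bm{G}$, factor $\bm{G} = \bm{W}\operatorname{diag}(\bm{\mu},\bm{0})\bm{W}^{\top}$ with $\bm{W}$ orthogonal. Then set $\bm{X} \defi [\bm{V},\bm{0}]\,\operatorname{diag}(\bm{\mu},\bm{0})^{1/2}\bm{W}^{\top} \in \mathbb{R}^{d\times k}$. This gives $\bm{X}\bm{X}^{\top} = \bm{V}\operatorname{diag}(\bm{\mu})\bm{V}^{\top} = \bm{M}$ and $\bm{X}^{\top}\bm{X} = \bm{G}$. Hence $\|\bm{x}^{i}\|_2^2 = G_{ii} \le 1$, so the columns of $\bm{X}$ lie in $B(\bm{0},1)$. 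The rank condition $r \le k$ is used exactly here: it lets the $r$ nonzero eigenvalues fit into a $k\times k$ Gram matrix.

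To construct $\bm{G}$, I will use the Schur--Horn theorem. A vector $\bm{a} \in \mathbb{R}^{k}$ is the diagonal of a real symmetric matrix with spectrum $\bm{\lambda}$ if and only if $\bm{a} \preceq \bm{\lambda}$, and an orthogonal conjugation realizes it. The natural target diagonal is the constant vector $\bm{a} = (\sigma/k)\bm{1}$, where $\sigma \defi \operatorname{Tr}(\bm{M}) = \sum_i \mu_i \le k$. Each entry $\sigma/k$ is then at most $1$. The constant vector is majorized by every vector with the same sum, in particular by $\bm{\lambda} = (\mu_1,\ldots,\mu_r,0,\ldots,0) \in \mathbb{R}^{k}$: each partial sum of the $m$ largest entries of $\bm{\lambda}$ is at least $m\sigma/k$, because the average of the top $m$ entries is at least the overall average. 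Schur--Horn then gives an orthogonal $\bm{W}$ such that $\bm{G} = \bm{W}\operatorname{diag}(\bm{\lambda})\bm{W}^{\top}$ has diagonal $(\sigma/k)\bm{1}$, and this finishes the construction.

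The main obstacle is making the Schur--Horn step concrete, since only existence is needed for the lemma but the later algorithm needs an explicit $\bm{W}$. For the lemma itself I will simply cite \cite{horn2012matrix} for the existence of $\bm{W}$. If an explicit construction is wanted, I would first try the standard sequence of at most $k-1$ Givens rotations, each of which equalizes one diagonal entry to $\sigma/k$, as in the constructive proof of Schur--Horn. This would also match the $O(kd + d^3)$ runtime claimed later: $O(d^3)$ for the eigendecomposition and $O(k\cdot\min\{d,k\})$-type work for the rotations. Finally, \Cref{cor:spectral-design-rank} is literally this lemma restated.
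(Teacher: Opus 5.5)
Your proposal is correct and is essentially the paper's proof. The inclusion $\subseteq$ follows from the trace and rank bounds, and $\supseteq$ follows by using Schur--Horn to obtain a $k\times k$ PSD matrix $\bm{G}$ with spectrum $(\mu_1,\ldots,\mu_r,0,\ldots,0)$ and constant diagonal $\operatorname{Tr}(\bm{M})/k\le 1$, then factoring so that $\bm{X}\bm{X}^{\top}=\bm{M}$ and $\bm{X}^{\top}\bm{X}=\bm{G}$ (for the algorithmic version, the paper applies the plane rotations of \cite{dhil:05} directly to the $d\times k$ factor $[\bm{V}\operatorname{diag}(\bm{\mu})^{1/2},\bm{0}]$, which gives the $O(kd)$ cost).
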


As a consequence of the proof of \Cref{lem:construction}, the representation of the left-hand side can be constructed algorithmically.
\begin{corollary}
\label{cor:construction_algorithm}
Given a matrix $\bm{M} \in \mathbb{S}_{d}^+$ with $\operatorname{Tr}(\bm{M}) \le k$ and $\operatorname{rank}(\bm{M}) \le k$, one can construct vectors $\bm{x}^{i} \in B(\bm{0},1)$, $i=1,\ldots,k$, such that $\bm{M} = \sum_{i=1}^{k} \bm{x}^{i} \left(\bm{x}^{i}\right)^{\top} $
in runtime $O(d^{3}+kd)$.
\end{corollary}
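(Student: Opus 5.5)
The plan is to make the proof of \Cref{lem:construction} constructive and count its cost. Given $\bm{M} \in \mathbb{S}_{d}^+$ with $\operatorname{Tr}(\bm{M}) \le k$ and $\operatorname{rank}(\bm{M}) = r \le k$, I first compute an eigendecomposition $\bm{M} = \bm{V} \operatorname{diag}(\mu_1,\ldots,\mu_r,0,\ldots,0)\bm{V}^\top$ in $O(d^3)$ time. Set $\bm{W} = \bm{V}_{r}\operatorname{diag}(\sqrt{\mu_1},\ldots,\sqrt{\mu_r}) \in \mathbb{R}^{d\times r}$, and pad it with $k-r$ zero columns to get $\bm{W}' \in \mathbb{R}^{d \times k}$. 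Then $\bm{W}'\bm{W}'^\top = \bm{M}$. The task reduces to finding an orthogonal $\bm{Q} \in \mathbb{R}^{k\times k}$ such that the columns of $\bm{X} = \bm{W}'\bm{Q}$ all have norm at most $1$, since $\bm{X}\bm{X}^\top = \bm{M}$ for any such $\bm{Q}$.

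The column norms of $\bm{W}'\bm{Q}$ are the diagonal entries of the $k\times k$ Gram matrix $\bm{Q}^\top \bm{G}\bm{Q}$, where $\bm{G} = \bm{W}'^\top\bm{W}' = \operatorname{diag}(\mu_1,\ldots,\mu_r,0,\ldots,0)$. By the Schur--Horn theorem, every vector $\bm{a}\in\mathbb{R}^k$ majorized by $(\mu_1,\ldots,\mu_r,0,\ldots,0)$ can be realized as such a diagonal. The target is $\bm{a} = (\tau/k)\mathbf{1}$ with $\tau = \operatorname{Tr}(\bm{M}) \le k$. The constant vector is majorized by every vector with the same sum, so each column norm becomes $\sqrt{\tau/k} \le 1$.

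The main obstacle is producing $\bm{Q}$ explicitly within $O(d^3 + kd)$ time rather than invoking Schur--Horn abstractly. I would use the standard algorithmic Schur--Horn construction via a sequence of Givens rotations, and exploit the fact that only $r \le \min\{d,k\}$ diagonal entries are nonzero. At each step, take an index $i$ with current diagonal value $g_i > \tau/k$ and an index $j$ with $g_j < \tau/k$, and apply a $2\times 2$ rotation in the $(i,j)$-plane that sets one of the two diagonal entries exactly to $\tau/k$. Such an angle exists by continuity: the rotated diagonal entries interpolate between $g_i$ and $g_j$ while preserving their sum. After at most $k-1$ steps every diagonal entry equals $\tau/k$.

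Applying each rotation to the columns of $\bm{W}'$ directly costs $O(d)$ per step, for $O(kd)$ total. The angle at each step can be computed from $\|\bm{w}_i\|^2$, $\|\bm{w}_j\|^2$ and $\bm{w}_i^\top\bm{w}_j$, which also cost $O(d)$. Adding the eigendecomposition gives $O(d^3 + kd)$. I also need to check that off-diagonal entries created by earlier rotations do not break later steps. This holds because the argument only uses continuity of the current $2\times 2$ diagonal pair and preservation of its sum, and each fixed entry is never touched again.
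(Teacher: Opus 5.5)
Your proposal is correct and follows the paper's route. Both compute an eigendecomposition in $O(d^3)$, pad $\bm{W}$ with $k-r$ zero columns, and then run the one-sided Givens-rotation Schur--Horn procedure on the columns to equalize the column norms at $\sqrt{\tau/k}$; the paper simply cites Algorithm~3 of \cite{dhil:05} for this step, whereas you reconstruct it explicitly, and your invariant argument for the constant target $\tau/k$ is sound. The one detail to add for the $O(kd)$ bound when $k \gg d$ is to keep the unfixed indices in two lists (above and below $\tau/k$), so each pair $(i,j)$ is chosen in $O(1)$ time rather than by an $O(k)$ scan, which would cost $O(k^2)$ overall.
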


Observe that
\[
\operatorname{diag}\left(\bm{\beta}'(s^*)\right)
\in
\Big\{
\bm{M} \in \mathbb{S}_{d}^+ :
\operatorname{Tr}(\bm{M}) \le k,\;
\operatorname{rank}(\bm{M}) \le k
\Big\}.
\]
Thus, \Cref{lem:construction} guarantees the existence of design vectors attaining this matrix, and \Cref{cor:construction_algorithm} provides a constructive procedure for computing them. Therefore, to complete the proof of \Cref{thm:general_opt_design}, it remains to prove \Cref{lem:simp_alpha,lem:construction} and \Cref{cor:construction_algorithm}, which we do in the following subsections.

\subsection{An Alternative Optimal Solution with Small Support}\label{sec:simp_beta}

In this section, we prove \Cref{lem:simp_alpha} and show another solution $\bm{\beta}'$ with support at most $k$ such that $P(\bm{t}+\bm{\beta}(s))=\bm{t}+\bm{\beta}'(s)$. 

\begin{proof}[{Proof of \Cref{lem:simp_alpha}}]
 If $k\ge d$, then $\hat{d}=d$, the upper-bound constraints in
problem~\eqref{eqn:relaxation-general} are vacuous,
and $\bm{\beta}'(s)=\bm{\beta}(s)$; hence $\bm{P}=\bm I_{d}$ works.
Thus, assume $k<d$.

Next we show that $\bm{t} + \bm{\beta}(s)$, as a multiset, contains at most $k$ values not in $\bm{t}$. Recall that $c(s)= \inf\{c\ge t_{1}:\Phi_{k}(c)\ge s\}$ and $\beta_{j}(s)
=
\min\big\{(c(s)-t_{j})_+,\,u_{j}-t_{j}\big\}$  for each $j=1,\ldots,d$ where $u_{j}=t_{j+k}$ if $j\leq d-k$ and $+\infty$ otherwise. Thus $t_{j}+\beta_{j}(s)= \min\big\{(c(s)-t_{j})_++t_{j},u_{j}\big\}$. Thus for any $1\leq j\leq d$, we have 
\begin{align}\label{eq_cases}
t_{j} + \beta_{j}(s) = \begin{cases}
t_{j + k} & \text{ if } t_{j+k}\leq c(s), \\
c(s) & \text{ if } t_{j}<c(s)< t_{j+k}, \\
t_{j} & \text{ if } t_{j}\geq c(s),
\end{cases}
\end{align}
where we use the convention that $t_{j} \defi \infty$ if $j > d$.

Now we claim that the second case in \eqref{eq_cases} only happens for at most $k$ indices. Indeed if $t_{j}<c(s)< t_{j+k}$ then clearly the index $j+k$ satisfies $t_{j+k}> c(s)$ and thus $t_{j+k}+\beta_{j+k}(s)$ is set according to the last case and equals $t_{j+k}$. Thus, suppose that $j^*$ denotes the smallest index such that $t_{j^*}+\beta_{j^*}(s)=c(s)$. Then, we must have $t_{j^*+k}\geq c(s)$. Thus, there are at most $k$ indices $j^*,j^*+1,\ldots,j^*+k-1$ in the second case of \eqref{eq_cases}.

Next, we discuss two subcases. 
\par\noindent \textbf{Subcase I.} First, there is no $j$ that satisfies the first case in \eqref{eq_cases}, i.e., there exists no $j$ such that $t_{j}+\beta_{j}(s)=t_{j+k}$. In this case, observe that $t_{j}+\beta_{j}(s)=c(s)$ for $1\leq j\leq r$ for some $r\leq k$ and $t_{j}+\beta_{j}(s)=t_{j}$ for $j>r$. We can define $\bm{\beta}'(s)=\bm{\beta}(s)$ and we are done. 

\par\noindent \textbf{Subcase II.} Else, let $\ell\geq 1$ be the largest index such that $t_{\ell+k}\leq c(s)$. Thus $c(s)< t_{\ell+k+1}$. Since we assume $k<d$, we must have $\ell\leq d-k$.

Then for each $1\leq j\leq \ell$, we have $t_{j+k}\leq t_{\ell+k}\leq c(s)$ and thus 
$$ t_{j} + \beta_{j}(s)= t_{j+k}.$$

Now for each $\ell+1\leq j\leq \ell+k$, we either have $t_{j}= t_{\ell+k}= c(s)$ or else have $t_{j}\leq t_{\ell+k}\leq c(s)$, $t_{j}<c(s)$ , and $t_{j+k}\geq t_{\ell+k+1}> c(s)$. Thus, in either way, we must have 
$$t_{j}+\beta_{j}(s)=c(s).$$

Now, if $j\geq \ell+k+1$, then we have $t_{j}\geq t_{\ell+k+1}> c(s)$ and thus we have 
$t_{j}+\beta_{j}(s)=t_{j}$.

Thus, as a multiset,
\[
\{t_{1}+\beta_{1}(s),\ldots,t_{d}+\beta_{d}(s)\}
=
\{t_{k+1},\ldots,t_{\ell+k},
\underbrace{c(s),\ldots,c(s)}_{k\text{ times}},
 t_{\ell+k+1},\ldots,t_{d}\}.
\]
Equivalently,
\[
\{t_{1}+\beta_{1}(s),\ldots,t_{d}+\beta_{d}(s)\}
=
\{\underbrace{c(s),\ldots,c(s)}_{k\text{ times}},t_{k+1},\ldots,t_{d}\}.
\]
Define $\bm{\beta}'(s)\in\mathbb{R}_+^{d}$ by
\[
    \beta'_{j}(s)
    \defi
    \begin{cases}
        c(s)-t_{j}, & j=1,\ldots,k,\\
        0, & j>k.
    \end{cases}
\]
Since $t_{k} \le t_{\ell+k} \leq c(s)$, we have $\bm{\beta}'(s)\ge0$, and
$|\operatorname{supp}(\bm{\beta}'(s))|\le k$. Moreover, the entries of
$\bm{t}+\bm{\beta}'(s)$ are exactly
\[
\{\underbrace{c(s),\ldots,c(s)}_{k\text{ times}},t_{k+1},\ldots,t_{d}\},
\]
which is the same multiset as the entries of $\bm{t}+\bm{\beta}(s)$. Therefore,
there exists a permutation matrix $\bm{P}$ such that
\[
    \bm{P}(\bm{t}+\bm{\beta}(s))=\bm{t}+\bm{\beta}'(s).
\]
Since $f$ is symmetric,
\[
    f(\bm{t}+\bm{\beta}(s))
    =
    f(\bm{P}(\bm{t}+\bm{\beta}(s)))
    =
    f(\bm{t}+\bm{\beta}'(s)).
\]
This completes the proof.
\end{proof}

\subsection{A Feasible Design Achieving the Lower Bound}\label{sec:construction}

In this section, we prove \Cref{lem:construction} and \Cref{cor:construction_algorithm}. Clearly, the containment ``$\subseteq $'' is straightforward. We now show ``$\supseteq$'' and then give the algorithm as claimed.

We use the following Schur--Horn theorem.
\begin{theorem}[{Schur--Horn theorem, see \cite[Theorems~4.3.45 and~4.3.48]{horn2012matrix}}]
Let $\bm{Y} \in \mathbb{R}^{n \times n}$ be a real symmetric matrix with eigenvalues $\lambda_{1} \ge \cdots \ge \lambda_{n}$ and diagonal entries $\bm{y}$ indexed such that $y_{1} \geq \cdots \geq y_{n}$.
Then $\bm{y} \preceq \bm{\lambda}$.
Conversely, if $\bm{y} \in \mathbb{R}^{n}$, indexed such that $y_{1} \geq \cdots \geq y_{n}$, and $\bm{\lambda} \in \mathbb{R}^{n}$, indexed such that $\lambda_{1} \ge \cdots \ge \lambda_{n}$, satisfies $\bm{y} \preceq \bm{\lambda}$, then there exists a real symmetric matrix $\bm{Y}$ with eigenvalues $\bm{\lambda}$ and diagonal entries $\bm{y}$.
\end{theorem}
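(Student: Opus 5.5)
The statement just above is the Schur--Horn theorem, which the paper cites from \cite{horn2012matrix}. Still, here is how I would prove it, with the direction used for \Cref{lem:construction} as the main target.

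\emph{Necessity.} I will write $\bm{Y} = \bm{Q}\operatorname{diag}(\bm{\lambda})\bm{Q}^{\top}$ with $\bm{Q}$ orthogonal. Then $y_i = \sum_j Q_{ij}^2\lambda_j$, so $\bm{y} = \bm{S}\bm{\lambda}$, where $\bm{S}$ has entries $S_{ij}=Q_{ij}^2$ and is doubly stochastic. By Birkhoff's theorem, $\bm{y}$ is a convex combination of permutations of $\bm{\lambda}$. Hence the partial sums of the $m$ largest entries of $\bm{y}$ are at most those of $\bm{\lambda}$, and the totals agree because $\operatorname{Tr}\bm{Y}=\sum_j\lambda_j$. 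This gives $\bm{y}\preceq\bm{\lambda}$. Alternatively, Ky Fan's maximum principle gives the partial-sum bounds directly, since $\sum_{i\le m}y_i$ is the trace of a compression of $\bm{Y}$ to an $m$-dimensional coordinate subspace.

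\emph{Sufficiency.} I would argue by induction on $n$, using Givens rotations; the same argument should give the $O(d^3)$ algorithm needed for \Cref{cor:construction_algorithm}. The case $n=1$ is trivial. For the step, assume $\bm{y}\preceq\bm{\lambda}$, with both vectors sorted non-increasingly. Since $\lambda_n\le y_n\le y_1\le\lambda_1$, I pick an index $p$ with $\lambda_p\ge y_1\ge\lambda_{p+1}$. Rotating $\operatorname{diag}(\lambda_p,\lambda_{p+1})$ by the angle $\theta$ with $\cos^2\theta\,\lambda_p+\sin^2\theta\,\lambda_{p+1}=y_1$ produces a $2\times2$ block whose first diagonal entry is $y_1$ and whose other diagonal entry is $\lambda_p+\lambda_{p+1}-y_1$. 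Now $\bm{Y}$ is orthogonally similar to a matrix with $y_1$ in the $(1,1)$ position. Its trailing $(n-1)\times(n-1)$ block is diagonal, with entries $\bm{\lambda}'=(\lambda_1,\dots,\lambda_{p-1},\lambda_p+\lambda_{p+1}-y_1,\lambda_{p+2},\dots,\lambda_n)$.

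The main obstacle is to show that $(y_2,\dots,y_n)\preceq\bm{\lambda}'$. Equality of the sums is immediate. For the partial sums, I would split into two cases. For $m<p$, the top-$m$ sum of $\bm{\lambda}'$ is at least $\sum_{i\le m}\lambda_i\ge\sum_{i\le m+1}y_i-y_1$, which holds because $y_{m+1}\le y_1\le\lambda_{p}\le\lambda_m$ handles the comparison. For $m\ge p$, I would use $\sum_{i\le m+1}\lambda_i-y_1\ge\sum_{i=2}^{m+1}y_i$, which is exactly the original majorization inequality. There is one subtlety: $\bm{\lambda}'$ may not be sorted, and the top-$m$ sum can only be larger than the prefix sum, so the bounds survive. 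Once this is checked, the induction hypothesis gives an orthogonal matrix acting on the last $n-1$ coordinates that realizes the remaining diagonal $(y_2,\dots,y_n)$. Conjugating by it leaves the $(1,1)$ entry equal to $y_1$ and the eigenvalues unchanged, which completes the induction.

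Each step of the induction costs $O(n^2)$, since it updates a row and column of the accumulated orthogonal matrix. Over $n$ steps this gives $O(n^3)$ in total.
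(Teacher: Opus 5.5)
The paper does not prove this theorem. It imports both directions from \cite{horn2012matrix}, and for the computational form used in \Cref{cor:construction_algorithm} it relies on the plane-rotation algorithm of \cite{dhil:05}. Your self-contained argument is correct.

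Necessity via the orthostochastic matrix $S_{ij}=Q_{ij}^{2}$ and Birkhoff's theorem (or Ky Fan) is standard. In the sufficiency step, both cases of $(y_{2},\dots,y_{n})\preceq\bm{\lambda}'$ check out:
\begin{itemize}
\item For $m<p$, every $y_{i}$ satisfies $y_{i}\le y_{1}\le\lambda_{p}\le\lambda_{j}$ for all $j\le m$. Hence $\sum_{i=2}^{m+1}y_{i}\le m\,y_{1}\le\sum_{j\le m}\lambda_{j}$.
\item For $m\ge p$, the bound is the original partial-sum inequality at index $m+1\le n-1$.
\end{itemize}
The subtlety you flag does not actually occur. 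Since $y_{1}\in[\lambda_{p+1},\lambda_{p}]$, the merged entry $\lambda_{p}+\lambda_{p+1}-y_{1}$ also lies in $[\lambda_{p+1},\lambda_{p}]$. So $\bm{\lambda}'$ is already non-increasing, and its prefix sums are its top-$m$ sums.

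Compared with the citation, your route buys constructiveness. It exhibits the required orthogonal matrix as a product of $n-1$ plane rotations. This is a Chan--Li type argument, the same mechanism that \cite{dhil:05} generalizes, so it shows why the existence statement used in \Cref{lem:construction} is effectively computable.

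One correction to your complexity aside. In \Cref{lem:construction} the theorem is applied to the Gram matrix $\bm{G}\in\mathbb{S}_{k}^{+}$, so $n=k$, not $d$. Your count then gives $O(k^{3})$, which does not deliver the $O(d^{3}+kd)$ bound of \Cref{cor:construction_algorithm} when $k\gg d$. To get the $O(kd)$ term, apply each rotation one-sidedly to the corresponding two columns of the $d\times k$ factor $\bm{Y}$ from the proof of \Cref{cor:construction_algorithm}. That costs $O(d)$ per rotation, and you never accumulate a $k\times k$ orthogonal matrix. This factor-level version is what the paper takes from \cite{dhil:05}.
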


\begin{proof}[{Proof of \Cref{lem:construction}}]
Suppose that $\bm{M}\in\mathbb{S}_{d}^+$ satisfies $\operatorname{Tr}(\bm{M})\le k$ and $\operatorname{rank}(\bm{M})\le k$. Let $r=\operatorname{rank}(\bm{M})$ and let
$
\bm{M}=\sum_{j=1}^{r}\lambda_{j} \bm{w}^{j}(\bm{w}^{j})^{\top}
$
be an eigenvalue decomposition of $\bm{M}$, where $\lambda_{j}>0$ for $j=1,\ldots,r$ such that $\lambda_{1}\geq \cdots\geq \lambda_{r}$ and $\bm{w}^{1},\ldots,\bm{w}^{r}$ are orthonormal. Let $s=\operatorname{Tr}(\bm{M})=\sum_{j=1}^{r}\lambda_{j}$.

If $s=0$, then $\bm{M}=\bm{0}$ and the desired representation is obtained by taking $\bm{x}^{i}=\bm{0}$ for all $i=1,\ldots,k$. Hence, suppose $s>0$.

Define the vector $\bm{\lambda}\in\mathbb{R}^{k}$ by padding the nonzero eigenvalues with zeros,
that is, 
$
\bm{\lambda}=(\lambda_{1},\ldots,\lambda_{r},0,\ldots,0)^{\top}$.
Let $\bm{a}=(s/k,\ldots,s/k)^{\top}\in\mathbb{R}^{k}$. Since $\bm{\lambda}$ is nonnegative and has sum $s$, the vector $\bm{a}$ is majorized by $\bm{\lambda}$. By the Schur--Horn theorem, there exists a symmetric positive semidefinite matrix $\bm{G}\in\mathbb{S}_{k}^+$ with eigenvalues $\bm{\lambda}$ and diagonal entries
$
G_{ii}=s/k$, 
$i=1,\ldots,k$.

Since $\bm{G}$ has the same nonzero eigenvalues as $\bm{M}$, there exists a matrix $\bm{X}\in\mathbb{R}^{d\times k}$ such that
$
\bm{X}\bm{X}^{\top}=\bm{M}
$
and
$
\bm{X}^{\top}\bm{X}=\bm{G}.
$
Indeed, if $\bm{G}=\bm{V}\operatorname{diag}(\bm{\lambda})\bm{V}^{\top}$ is an eigenvalue decomposition of $\bm{G}$, and $\bm{W}=[\bm{w}^{1},\ldots,\bm{w}^{r}]\in\mathbb{R}^{d\times r}$, then one may take
$
\bm{X}
=
\bm{W}
\begin{pmatrix}
\operatorname{diag}(\sqrt{\lambda_{1}},\ldots,\sqrt{\lambda_{r}}) & \bm{0}
\end{pmatrix}
\bm{V}^{\top}.
$
Then $\bm{X}\bm{X}^{\top}=\bm{M}$.

Let $\bm{x}^{i}$ denote the $i$th column of $\bm{X}$. Since $\bm{X}^{\top}\bm{X}=\bm{G}$, we have
$
\|\bm{x}^{i}\|_{2}^{2}
=
(\bm{X}^{\top}\bm{X})_{ii}
=
G_{ii}
=
s/k
\le 1
$
for all $i=1,\ldots,k$. Hence $\bm{x}^{i}\in B(\bm{0},1)$ for all $i=1,\ldots,k$, and
$
\bm{M}=\bm{X}\bm{X}^{\top}
=
\sum_{i=1}^{k}\bm{x}^{i}(\bm{x}^{i})^{\top}.
$
This proves the reverse inclusion.\end{proof}

\begin{proof}[Proof of \Cref{cor:construction_algorithm}] 
Having established \Cref{lem:construction}, 
it remains to be shown that the claimed running time holds.
Computing an eigendecomposition of $
\bm{M}=\sum_{j=1}^{r}\lambda_{j} \bm{w}^{j}(\bm{w}^{j})^{\top}
$
costs $O(d^{3})$, where 
 $r=\operatorname{rank}(\bm{M})$, 
 $\lambda_{j}>0$ for $j=1,\ldots,r$ such that $\lambda_{1}\geq \cdots\geq \lambda_{r}$ and $\bm{w}^{1},\ldots,\bm{w}^{r}$ are orthonormal
as in \Cref{lem:construction}.
We form
$
\bm{Y} = [\sqrt{\lambda_{1}}\,\bm{w}^{1},\ldots,\sqrt{\lambda_{r}}\,\bm{w}^{r},\,
\bm{0},\ldots,\bm{0}] \in \mathbb{R}^{d\times k}
$,
where $\bm{0}$ is repeated $k-r$ times; this matrix satisfies
$\bm{Y}\bm{Y}^{\top}=\bm{M}$ and
$\bm{Y}^{\top}\bm{Y}=\operatorname{diag}(\bm{\lambda})$, and is assembled
from the eigendecomposition in $O(kd)$ time. Applied to $\bm{Y}$,
Algorithm~3 of \cite{dhil:05} returns $\bm{X}\in\mathbb{R}^{d\times k}$
with $\bm{X}\bm{X}^{\top}=\bm{M}$ and $\bm{X}^{\top}\bm{X}$ having
constant diagonal $s/k$, so $\bm{G}=\bm{X}^{\top}\bm{X}$ realizes the
Schur--Horn construction of the existence argument 
in the proof of \Cref{lem:construction}. 
The runtime of
Algorithm~3 of \cite{dhil:05} is $O(kd)$ (see \cite[p.~68]{dhil:05}), so
the total runtime is $O(d^{3}+kd)$.
\end{proof}

\subsection{Computing an Optimal Spectral Design}
\label{subsec:computing-optimal-design}

In this subsection, we state the construction
in \Cref{cor:construction_algorithm}
as an explicit algorithm. The
construction works for a general positive semidefinite 
symmetric prior information matrix $\bm A$. The analysis
in \Cref{sec_opt_lb:monotone,sec:construction} applies after rotating
coordinates by an eigenspace basis of $\bm A$.
\Cref{alg:computing-optimal-design} applies under the
hypotheses of \Cref{thm:general_opt_design}.

\begin{algorithm}[t]
\begin{tcolorbox}[
    colback=white,
    colframe=black,
    boxrule=0.8pt,
    sharp corners,
    left=6pt,
    right=6pt,
    top=6pt,
    bottom=6pt
]
\caption{Optimal Spectral Design with Prior Information}
\label{alg:computing-optimal-design}
\begin{enumerate}[label=\textup{\arabic*.},nosep]

\item Compute an eigendecomposition
$
    \bm A
    =
    \bm{Q}\operatorname{diag}(t_{1},\ldots,t_{d})\bm{Q}^{\top}
$
with $0\le t_{1}\le \cdots \le t_{d}$.

\item Set $\hat{d}=\min\{d,k\}$ and define the capacities $u_{j}$ as in
\eqref{eqn:caps-water-filling}.

\item For a given $s\in[0,k]$, compute the water level $c(s)$ as in
\eqref{eqn:ck-definition2}, and define $\bm{\beta}(s)$ as in
\eqref{eqn:beta-water-filling2}. 
\item If $f$ is non-increasing, set $s^*=k$, as in
\Cref{prop:water-filling-monotone}. Otherwise, compute
an $\varepsilon$-optimal solution $s^*$ to
\[
    \min_{0\le s\le k} f(\bm{t}+\bm{\beta}(s)).
\]

\item Define $\bm{\beta}'(s^*)\in\mathbb R_+^{d}$ as in \Cref{lem:simp_alpha}.

\item Apply the  procedure from the proof of
\Cref{lem:construction} 
to
$
    \bm M=\operatorname{diag}(\bm{\beta}'(s^*))
$
to obtain vectors $\bm z^{1},\ldots,\bm z^{k}\in B(\bm 0,1)$ such that
$
    \sum_{i=1}^{k} \bm z^i(\bm z^i)^{\top}
    =
    \operatorname{diag}(\bm{\beta}'(s^*))
$.
\item Return
$
    \bm X^*=\bm{Q} \bm{Z} 
$,
where $\bm{Z} \coloneqq [\bm z^{1},\ldots,\bm z^{k}]$.
\end{enumerate}
\end{tcolorbox}
\end{algorithm}

The columns of the matrix $\bm X^*$ returned by
\Cref{alg:computing-optimal-design} belong to $B(\bm 0,1)$, because
$\bm Q$ is orthonormal and the columns of $\bm Z$ belong to
$B(\bm 0,1)$. Moreover,
\[
\bm A+\bm X^*(\bm X^*)^{\top}
=
\bm{Q}
\left(
\operatorname{diag}(\bm{t})+\operatorname{diag}(\bm{\beta}'(s^*))
\right)
\bm{Q}^{\top} .
\]
By \Cref{lem:simp_alpha}, the vector $\bm{t}+\bm{\beta}'(s^*)$ is a permutation
of $\bm{t}+\bm{\beta}(s^*)$. Since $f$ is symmetric,
the constructed design attains the lower bound in
\Cref{prop:lower_bound_general}, and is optimal for
problem~\eqref{eq:SpectralDesign}
if $f$ is non-increasing or $\varepsilon$-optimal
otherwise.

\subsection{Closed-form Optimal Designs with Isotropic Prior}

We next show that when the prior information matrix is isotropic,
i.e., $\bm{A}=\ell\bm I$ with $\ell\ge0$, the optimal design obtained in
\Cref{thm:general_opt_design} admits a simple closed-form construction.
In this case, all eigenvalues of $\bm{A}$ are equal, and the relaxation
\eqref{eqn:relaxation-general} allocates the increase in eigenvalues
uniformly across coordinates.

The following proposition constructs explicit design vectors that realize
these eigenvalue increments.
\begin{proposition}[Closed-form optimal designs]
\label{prop:closed_form_design}
Suppose that $\bm{A}=\ell\bm I_{d}$ with $\ell\ge0$. Let $s \in [0,k]$.
Then the following holds:
\begin{enumerate}[nosep]
\item[\textup{(i)}] If $k\le d$, define $\bm{x}^{i} = \sqrt{\frac{s}{k}}\,\bm e_{i}$ for each $i=1,\ldots,k.$
Then
\[
\sum_{i=1}^{k} \bm{x}^{i} (\bm{x}^{i})^{\top}
=
\operatorname{diag}\left(\tfrac{s}{k},\ldots,\tfrac{s}{k},0,\ldots,0\right),
\]
where $s/k$ is repeated $k$ times.
\item[\textup{(ii)}] If $k\ge d+1$, let $\theta_{i} = 2\pi(i-1)/k$ for $i=1,\ldots,k$ and define
\[
\bm{x}^{i}=\sqrt{\frac{2s}{dk}}
\begin{cases}
\begin{pmatrix}
\sin(\theta_{i})& \cos(\theta_{i})&\cdots &\sin(\tfrac{d}{2}\theta_{i})& \cos(\tfrac{d}{2}\theta_{i})
\end{pmatrix}^{\top} &\text{if $d$ is even}, \\[8pt]
\begin{pmatrix}
\tfrac{\sqrt2}{2}&
\sin(\theta_{i})& \cos(\theta_{i})&\cdots &
\sin(\lfloor d/2\rfloor \theta_{i})& \cos(\lfloor d/2\rfloor \theta_{i})
\end{pmatrix}^{\top} &\text{if $d$ is odd}.
\end{cases}
\]
Then
\[
\sum_{i=1}^{k}\bm{x}^{i}(\bm{x}^{i})^{\top}
=
\frac{s}{d}\bm I_{d} .
\]
\end{enumerate}
In either case, there exists  $s=s^*\in [0,k]$ such that $\bm{A}+\sum_{i=1}^{k} \bm{x}^{i}(\bm{x}^{i})^{\top}$ achieves the optimal eigenvalue allocation $\bm{t}+\bm{\beta}(s^*)$ and hence
attains the optimal value of problem~\eqref{eq:SpectralDesign}.
\end{proposition}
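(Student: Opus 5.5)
Part (i) is a direct computation and part (ii) reduces to discrete Fourier orthogonality; the optimality claim then comes from identifying the water-filling solution when $\bm t=\ell\bm 1$ and invoking \Cref{lem:general} and \Cref{lem:simp_alpha}.

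\textbf{Part (i).} We have $\sum_i \bm x^i(\bm x^i)^\top=(s/k)\sum_{i=1}^k \bm e_i\bm e_i^\top$, which is the stated diagonal matrix. Each vector has norm $\sqrt{s/k}\le 1$, so it lies in $B(\bm 0,1)$.

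\textbf{Part (ii), entries.} The $(p,q)$ entry of $\sum_i \bm x^i(\bm x^i)^\top$ is $(2s/(dk))\sum_i \phi_p(\theta_i)\phi_q(\theta_i)$, where each $\phi_p$ is one of $\sin(m\theta)$, $\cos(m\theta)$ (and the constant $\sqrt2/2$ when $d$ is odd). The frequencies satisfy $m\le \lfloor d/2\rfloor$. I will use product-to-sum formulas together with the identity
\[
\sum_{i=1}^k \cos(n\theta_i)=\sum_{i=1}^k \sin(n\theta_i)=0 \quad \text{for } n\not\equiv 0 \pmod k.
\]
Cross terms involve frequencies $m\pm m'$ with $0<|m\pm m'|\le d<k$, so they vanish. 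For diagonal terms, $\sum_i\sin^2(m\theta_i)=k/2-\tfrac12\sum_i\cos(2m\theta_i)$ and $\sum_i\cos^2(m\theta_i)=k/2+\tfrac12\sum_i\cos(2m\theta_i)$. Both equal $k/2$ provided $2m\not\equiv 0 \pmod k$.

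\textbf{Main obstacle.} The delicate case is $2m=k$, i.e.\ $d$ even and $k=d$; this is excluded by $k\ge d+1$, since then $2m\le d<k$. For the constant coordinate, $\sum_i(\sqrt2/2)^2=k/2$, and its cross terms with $\sin(m\theta)$, $\cos(m\theta)$ vanish. So every diagonal entry is $(2s/(dk))(k/2)=s/d$. The vector norms are also fine: each $\|\bm x^i\|^2=(2s/(dk))\cdot d/2=s/k\le1$, using $\sin^2+\cos^2=1$ for even $d$ and adding the $1/2$ from the constant term for odd $d$.

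\textbf{Optimality.} With $\bm t=\ell\bm 1$ and $k<d$, the caps are $u_j=\ell$ for $j\le d-k$. Water filling with budget $s$ therefore yields the multiset $\{\ell+s/k\ (k\text{ times}),\ \ell,\dots,\ell\}$. By \Cref{lem:simp_alpha} this equals $\bm t+\bm\beta'(s)$, which is exactly part (i). For $k\ge d$ there are no caps, so $\bm\beta(s)=(s/d)\bm 1$. This matches part (ii) when $k\ge d+1$. In the boundary case $k=d$, part (i) gives $(s/d)\bm I$ as well, so both cases are covered.

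Now take $s^*$ from \Cref{lem:general}, or $s^*=k$ when $f$ is non-increasing by \Cref{prop:water-filling-monotone}. The constructed design realizes $\bm t+\bm\beta(s^*)$ up to permutation, so by symmetry of $f$ it attains the lower bound of \Cref{prop:lower_bound_general} and is therefore optimal.
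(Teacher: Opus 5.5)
Your proposal is correct and takes essentially the same route as the paper: a direct computation for (i), discrete Fourier orthogonality via product-to-sum identities for (ii), and, for optimality, identifying the water-filling solution when $\bm{t}=\ell\bm{1}$ and then invoking \Cref{lem:general} and \Cref{lem:simp_alpha}. The paper leaves three details implicit that you handle explicitly: the feasibility check $\|\bm{x}^{i}\|_{2}^{2}=s/k\le 1$, the observation that $k\ge d+1$ excludes the resonance $2m=k$, and the separate treatment of the boundary case $k=d$.
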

\begin{proof}
Case (i) is immediate since the vectors are orthogonal and
\[
\sum_{i=1}^{k} \bm{x}^{i}(\bm{x}^{i})^{\top}
=
\frac{s}{k}\sum_{i=1}^{k} \bm e_{i}\bm e_{i}^{\top} .
\]

We now prove case (ii).  We first note that for any
integer $m\in\mathbb Z$,
\[
\sum_{j=1}^{k} \mathrm{e}^{\mathrm{i} m \theta_{j}}
=
\begin{cases}
k,& m\equiv0 \pmod k,\\
0,&\text{otherwise}.
\end{cases}
\]
Indeed,
\[
\sum_{j=1}^{k} \mathrm{e}^{\mathrm{i} m \theta_{j}}
=
\sum_{j=0}^{k-1} \mathrm{e}^{\mathrm{i}2\pi m j/k}
=
\sum_{j=0}^{k-1}\omega^{j},
\qquad
\omega=\mathrm{e}^{\mathrm{i} 2\pi m/k},
\]
which equals $k$ if $\omega=1$ and otherwise vanishes by the geometric
series formula.

Taking real and imaginary parts yields
\[
\sum_{j=1}^{k} \cos(m \theta_{j})
=
\sum_{j=1}^{k} \sin(m \theta_{j})
=
0,
\qquad
m=1,\ldots,k-1.
\]

Using standard trigonometric identities,
\begin{align*}
\sum_{i=1}^{k} \sin(s\theta_{i})\cos(r\theta_{i})
&=\frac12\sum_{i=1}^{k}
\left[\sin((s+r)\theta_{i})+\sin((s-r)\theta_{i})\right],\\
\sum_{i=1}^{k} \sin(s\theta_{i})\sin(r\theta_{i})
&=\frac12\sum_{i=1}^{k}
\left[\cos((s-r)\theta_{i})-\cos((s+r)\theta_{i})\right],\\
\sum_{i=1}^{k} \cos(s\theta_{i})\cos(r\theta_{i})
&=\frac12\sum_{i=1}^{k}
\left[\cos((s-r)\theta_{i})+\cos((s+r)\theta_{i})\right].
\end{align*}

The Fourier identities imply that all cross terms vanish and
\[
\sum_{i=1}^{k} \sin(s\theta_{i})\sin(r\theta_{i})
=
\sum_{i=1}^{k} \cos(s\theta_{i})\cos(r\theta_{i})
=
\begin{cases}
\frac{k}{2}, & s=r,\\
0, & s\ne r .
\end{cases}
\]

Substituting these identities into the definition of $\bm{x}^{i}$
shows that the coordinates are orthogonal and have equal energy,
which yields
\[
\sum_{i=1}^{k} \bm{x}^{i}(\bm{x}^{i})^{\top}
=
\frac{s}{d}\bm I_{d} .
\]

It remains to connect these computations with optimality. By
\Cref{lem:general}, there exists $s^*\in[0,k]$ such that
$\bm{\beta}(s^*)$ solves the lower-bound problem
\eqref{eqn:relaxation-general}. 
Since $\bm A=\ell\bm I_{d}$, all coordinates of $\bm{t}$ are equal to
$\ell$. If $k\le d$, then the water level for budget $s$ is
$c(s)=\ell+s/k$, and \Cref{lem:simp_alpha} gives
$\bm{\beta}'(s)=(s/k,\ldots,s/k,0,\ldots,0)$, where $s/k$ is repeated
$k$ times. This is realized by case~\textup{(i)}. If $k\ge d+1$, then
the water level is $c(s)=\ell+s/d$, and
$\bm{\beta}(s)=(s/d,\ldots,s/d)$, which is realized by case~\textup{(ii)}.
Taking $s=s^*$, the constructed design attains the lower bound. Hence
it is optimal for problem~\eqref{eq:SpectralDesign}.
\end{proof}
The construction in \Cref{prop:closed_form_design} is closely related to
classical Fourier analysis \cite{stein2011fourier}.
When $k\ge d$, the vectors form a tight frame satisfying
\[
\sum_{i=1}^{k} \bm{x}^{i}(\bm{x}^{i})^{\top} = \frac{s}{d}\bm I_{d} ,
\]
which distributes the design uniformly across all directions.

\section{Numerical Illustrations}

The purpose of this section is twofold: first, to graphically
illustrate optimal spectral designs in two dimensions; and second, to
illustrate their use in DFO\@. The former is
presented in \Cref{subsect:optimal-design-illustrations}, and the
latter in \Cref{subsect:dfo}.
The numerical illustrations are performed for symmetric, convex, non-increasing $f$.

\subsection{Spectral Design Illustrations}
\label{subsect:optimal-design-illustrations}

This section presents graphical illustrations of optimal spectral designs in two dimensions,
that is, $d=2$, 
computed using the Python
package \texttt{spectraldesign}
\cite{Kleywegt2026}.

\begin{figure}[t]
\centering
\begin{subfigure}[t]{0.75\textwidth}
\centering
\includegraphics[width=0.32\textwidth]{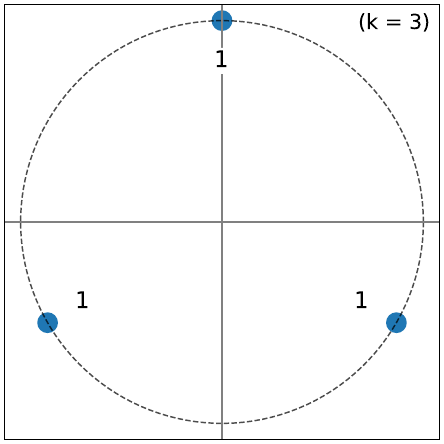}
\includegraphics[width=0.32\textwidth]{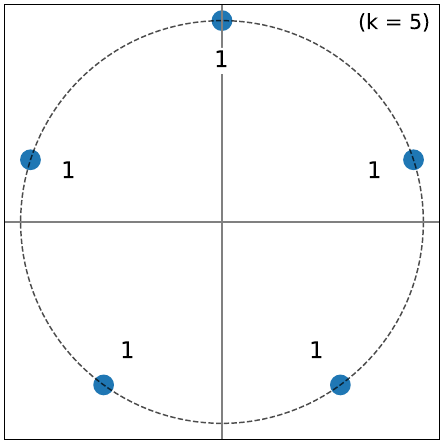}
\includegraphics[width=0.32\textwidth]{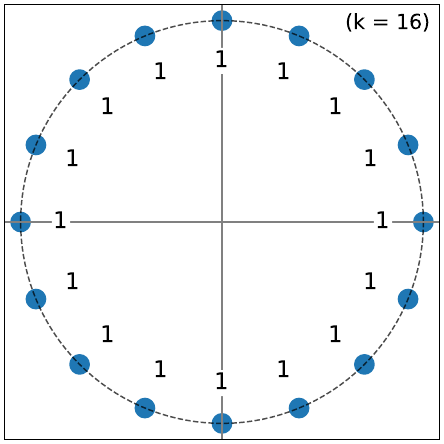}
\caption{Closed-form optimal designs.}
\end{subfigure}

\vspace{0.8em}

\begin{subfigure}[t]{0.75\textwidth}
\centering
\includegraphics[width=0.32\textwidth]{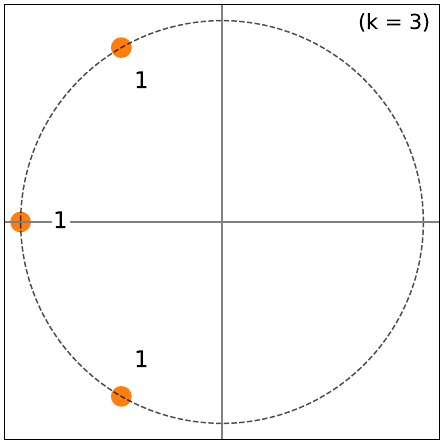}
\includegraphics[width=0.32\textwidth]{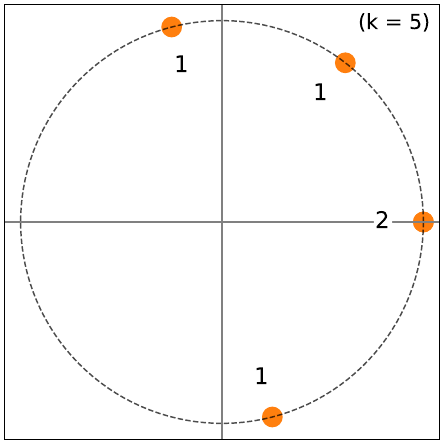}
\includegraphics[width=0.32\textwidth]{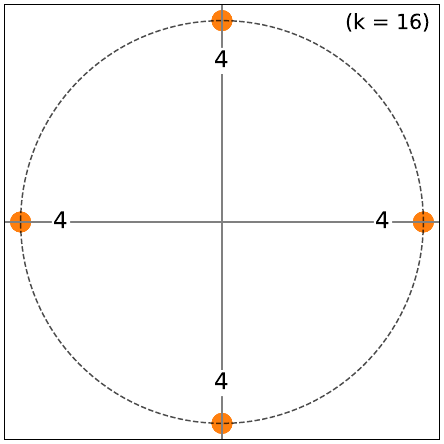}
\caption{Algorithmic optimal designs.}
\end{subfigure}
\caption{For three values of $k$ with zero prior information matrix
$\boldsymbol{A}=\boldsymbol{0}$, the optimal designs provided by
\Cref{prop:closed_form_design} \emph{(top)} and the optimal designs provided by the
spectral-design algorithm \emph{(bottom)}. The numbers next to the bullet
markers indicate the number of design vectors located at each site.}
\label{fig:closed-form-vs-algorithmic-solution}
\end{figure}

\Cref{fig:closed-form-vs-algorithmic-solution} compares two optimal designs for the case with no prior information, $\boldsymbol{A}=\boldsymbol{0}$. The top row shows the closed-form designs from \Cref{prop:closed_form_design}, whereas the bottom row shows the designs returned by the spectral-design algorithm. The numbers next to the markers indicate multiplicities, i.e., the number of design vectors placed at the same location. Although the closed-form and algorithmic constructions can produce different point configurations, they attain the same optimal value. This illustrates that the spectral design problem can have multiple optimal solutions: different sets of design vectors may generate updated information matrices with the same optimal eigenvalue profile.

\Cref{fig:one-prior-design} depicts optimal spectral designs for several values of $k$ when the prior matrix is rank one, $\boldsymbol{A}=\boldsymbol{x}_{0}\boldsymbol{x}_{0}^{\top}$, where $\boldsymbol{x}_{0}\in\mathbb{R}^{2}$. The prior vector is shown separately from the newly selected design vectors, and the multiplicities indicate how often the algorithm places design vectors at each selected point. The plots show that the prior direction affects both the locations and the multiplicities of the new design vectors. As $k$ increases, the algorithm does not simply add points uniformly; rather, it reallocates design effort to balance the spectrum of the updated information matrix $\boldsymbol{A}+\boldsymbol{X}\boldsymbol{X}^{\top}$. Thus, even in two dimensions with a rank-one prior, the optimal configuration depends in a nontrivial way on the interaction between the prior information and the number of additional design vectors.

\Cref{fig:two-prior-design} presents the corresponding designs when the prior matrix is generated by two prior design vectors, $\boldsymbol{A}=\boldsymbol{x}_{0}\boldsymbol{x}_{0}^{\top}+\boldsymbol{x}_{1}\boldsymbol{x}_{1}^{\top}$, where $\boldsymbol{x}_{0},\boldsymbol{x}_{1}\in\mathbb{R}^{2}$. Compared with \Cref{fig:one-prior-design}, the additional prior direction changes the spectrum and eigenspaces of the prior information matrix, which in turn changes the optimal placement of the new design vectors. Together, \Cref{fig:one-prior-design,fig:two-prior-design} demonstrate that optimal spectral designs are sensitive to both the eigenvalues and eigenvectors of the prior information matrix. These examples also illustrate why a constructive algorithm is useful: the optimal design is not always easy to infer from geometric intuition alone.

\begin{figure}[t]
\centering
\begin{subfigure}[t]{0.25\textwidth}
\centering
\includegraphics[width=0.95\textwidth]{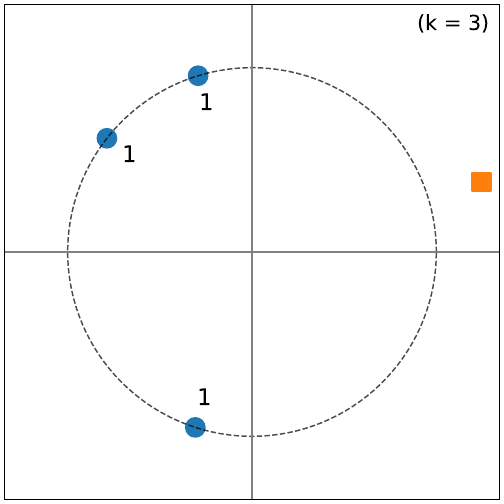}
\caption{$k=3$}
\end{subfigure}
\begin{subfigure}[t]{0.25\textwidth}
\centering
\includegraphics[width=0.95\textwidth]{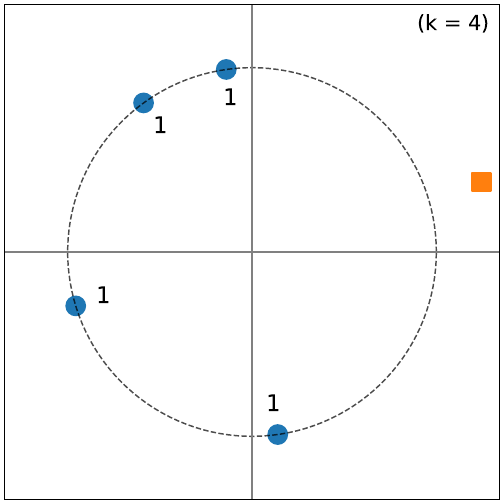}
\caption{$k=4$}
\end{subfigure}
\begin{subfigure}[t]{0.25\textwidth}
\centering
\includegraphics[width=0.95\textwidth]{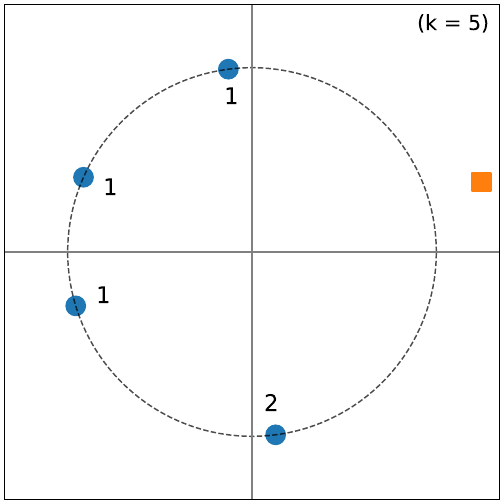}
\caption{$k=5$}
\end{subfigure}

\vspace{0.8em}

\begin{subfigure}[t]{0.25\textwidth}
\centering
\includegraphics[width=0.95\textwidth]{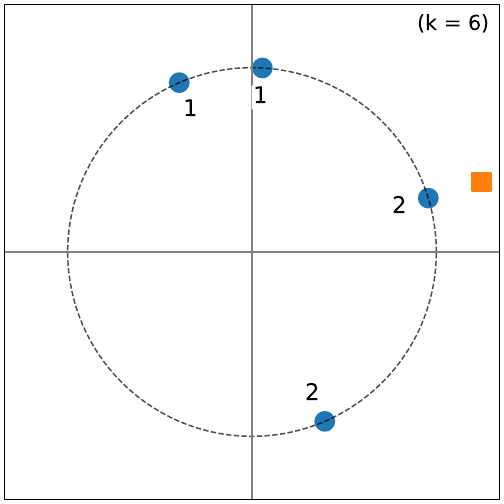}
\caption{$k=6$}
\end{subfigure}
\begin{subfigure}[t]{0.25\textwidth}
\centering
\includegraphics[width=0.95\textwidth]{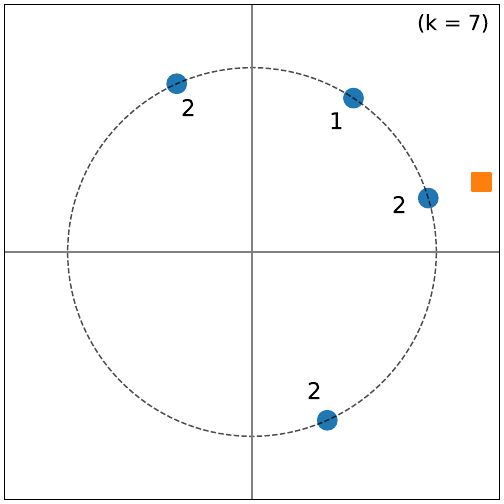}
\caption{$k=7$}
\end{subfigure}
\begin{subfigure}[t]{0.25\textwidth}
\centering
\includegraphics[width=0.95\textwidth]{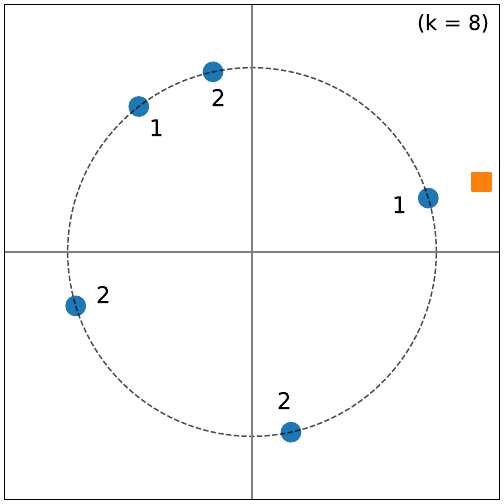}
\caption{$k=8$}
\end{subfigure}

\vspace{0.8em}

\begin{subfigure}[t]{0.25\textwidth}
\centering
\includegraphics[width=0.95\textwidth]{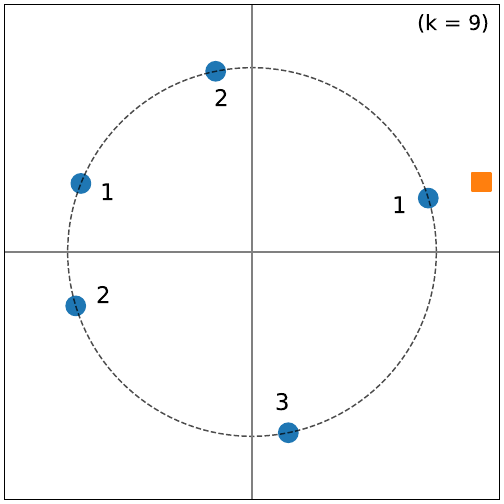}
\caption{$k=9$}
\end{subfigure}
\begin{subfigure}[t]{0.25\textwidth}
\centering
\includegraphics[width=0.95\textwidth]{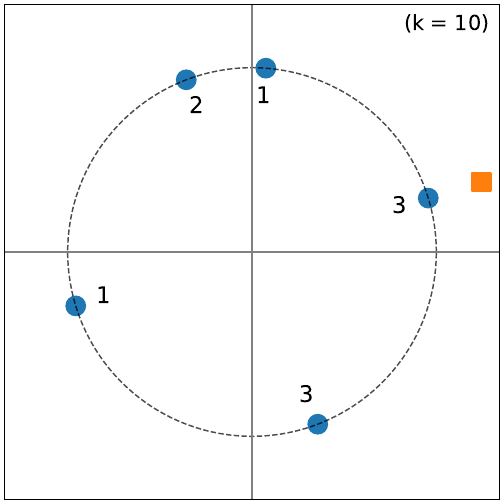}
\caption{$k=10$}
\end{subfigure}
\begin{subfigure}[t]{0.25\textwidth}
\centering
\includegraphics[width=0.95\textwidth]{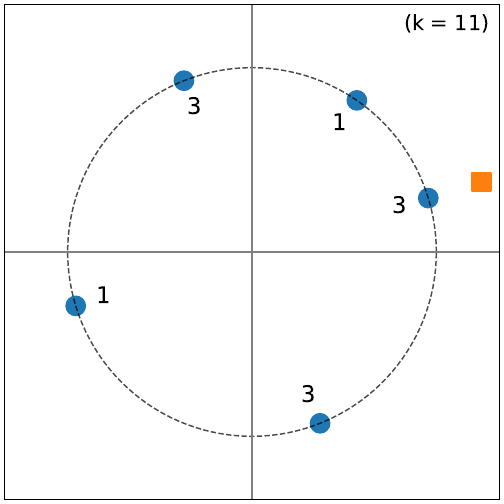}
\caption{$k=11$}
\end{subfigure}

\caption{For several $k$, optimal spectral designs with a single prior design vector,
that is, $\boldsymbol{A}=\boldsymbol{x}_{0}\boldsymbol{x}_{0}^{\top}$. The prior
vector $\boldsymbol{x}_{0}$ is shown as an orange square, and the optimal spectral designs computed by \texttt{spectraldesign} are
shown as blue dot markers. The numbers next to the bullet
markers indicate the number of design vectors located at each site.}
\label{fig:one-prior-design}
\end{figure}

\begin{figure}[t]
\centering
\begin{subfigure}[t]{0.25\textwidth}
\centering
\includegraphics[width=0.95\textwidth]{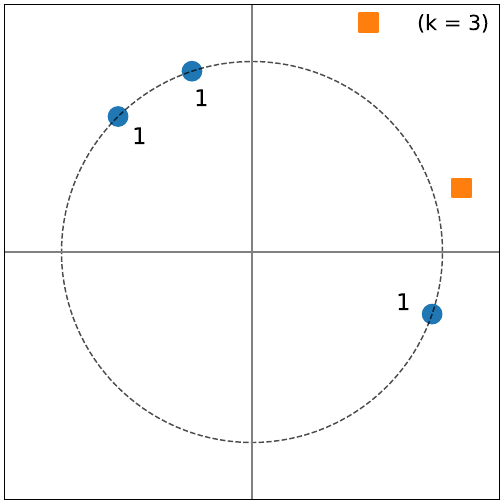}
\caption{$k=3$}
\end{subfigure}
\begin{subfigure}[t]{0.25\textwidth}
\centering
\includegraphics[width=0.95\textwidth]{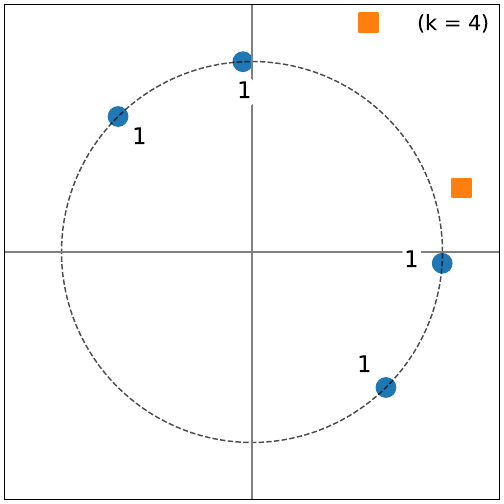}
\caption{$k=4$}
\end{subfigure}
\begin{subfigure}[t]{0.25\textwidth}
\centering
\includegraphics[width=0.95\textwidth]{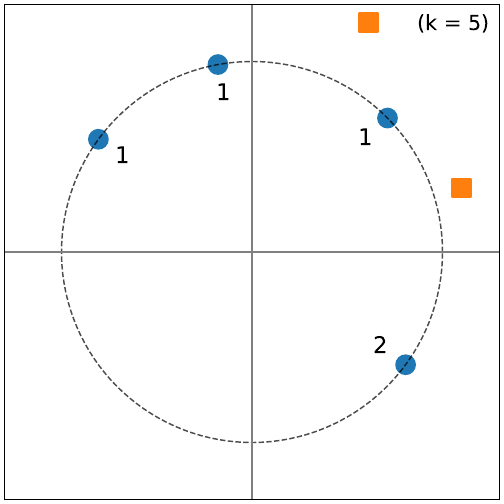}
\caption{$k=5$}
\end{subfigure}

\vspace{0.8em}

\begin{subfigure}[t]{0.25\textwidth}
\centering
\includegraphics[width=0.95\textwidth]{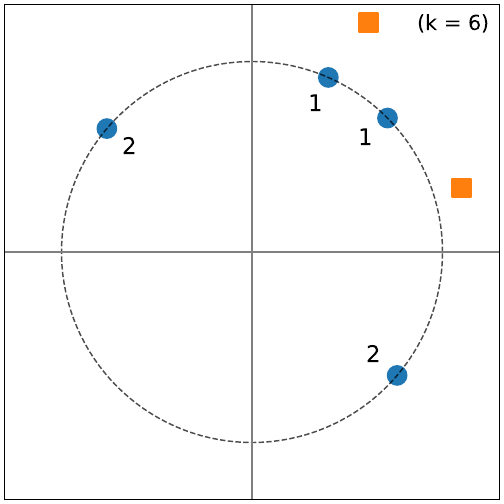}
\caption{$k=6$}
\end{subfigure}
\begin{subfigure}[t]{0.25\textwidth}
\centering
\includegraphics[width=0.95\textwidth]{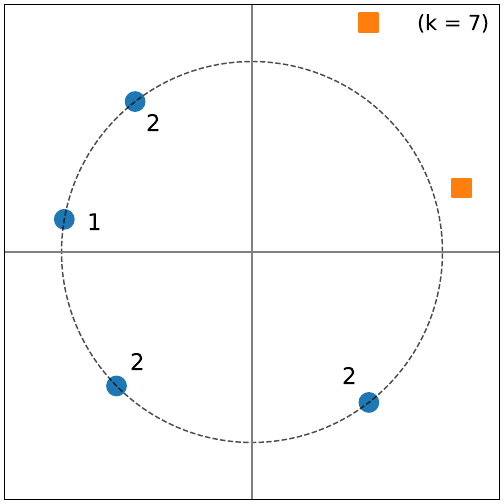}
\caption{$k=7$}
\end{subfigure}
\begin{subfigure}[t]{0.25\textwidth}
\centering
\includegraphics[width=0.95\textwidth]{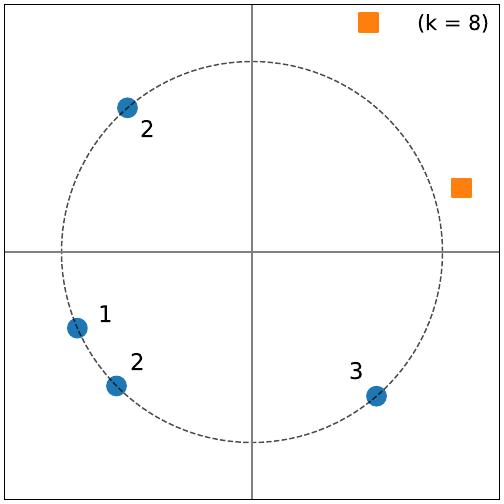}
\caption{$k=8$}
\end{subfigure}

\vspace{0.8em}

\begin{subfigure}[t]{0.25\textwidth}
\centering
\includegraphics[width=0.95\textwidth]{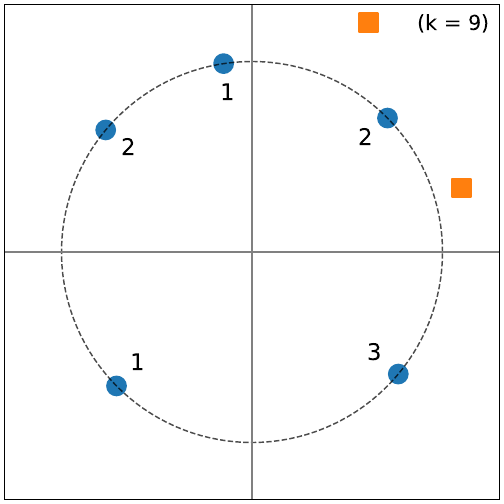}
\caption{$k=9$}
\end{subfigure}
\begin{subfigure}[t]{0.25\textwidth}
\centering
\includegraphics[width=0.95\textwidth]{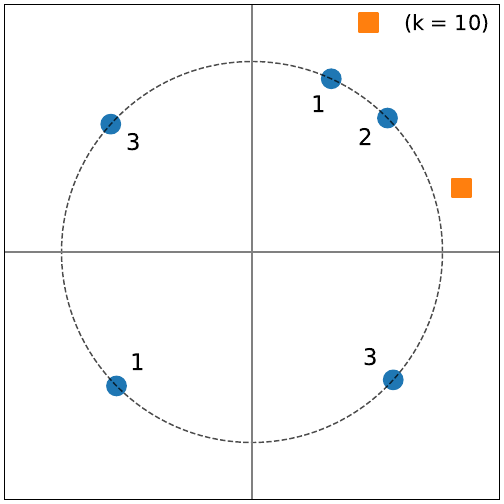}
\caption{$k=10$}
\end{subfigure}
\begin{subfigure}[t]{0.25\textwidth}
\centering
\includegraphics[width=0.95\textwidth]{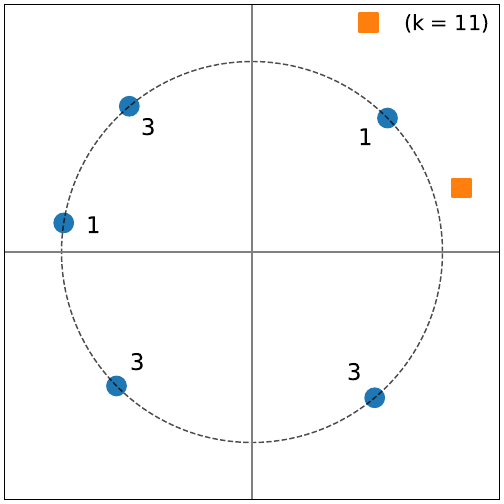}
\caption{$k=11$}
\end{subfigure}

\caption{For several $k$,  optimal spectral designs with two prior design vectors,
that is, $\boldsymbol{A}=\boldsymbol{x}_{0}\boldsymbol{x}_{0}^{\top}
+\boldsymbol{x}_{1}\boldsymbol{x}_{1}^{\top}$. The prior vectors $\boldsymbol{x}_{0}$ and $\boldsymbol{x}_{1}$ are shown as
orange squares, and the optimal spectral designs computed by \texttt{spectraldesign} are shown as blue dot
markers. The numbers next to the bullet
markers indicate the number of design vectors located at each site.}
\label{fig:two-prior-design}
\end{figure}

\subsection{DFO Illustrations}
\label{subsect:dfo}

We next illustrate the use of spectral designs within a first-order
model-based DFO procedure. These experiments are intended as a
proof-of-concept for using spectral designs inside a regression-based
DFO method. We do not provide a comprehensive benchmark against
state-of-the-art DFO solvers. We compare three solvers on smooth
unconstrained problems of the form~\eqref{eq:unconstrained-problem}.
Each solver observes only noisy function values, while the true
objective values are used solely for performance assessment. 
The computer code and simulation output are archived at
\cite{johannes_milz_2026_20209976}.

\paragraph{Test Problems.}
We use the $53$ smooth instances of the Mor\'e--Wild test set
\cite{More2009}, obtained from the \texttt{BenDFO} benchmark suite
\cite{POptUS2026}. The solvers do not observe $g$ directly. Instead,
each oracle call returns
\[
\tilde g(\bm{y})
=
g(\bm{y})+\xi,
\qquad
\xi \sim \mathcal{U}[-\sigma/2,\sigma/2],
\]
where the noise variables $\xi$ 
are drawn independently across oracle calls.
This additive-noise model is a standard way to stress-test DFO methods
under inexact function evaluations. Each problem is run
with $30$ independent random seeds, giving $1590$ test instances.

\paragraph{Algorithms and Parameter Choices.}
We compare three variants of the derivative-free bidirectional method
(DFBD) of \cite[Algorithm~4]{Khanh2025}. The first variant, denoted
``DFBD + forward differences,'' uses forward finite differences to
estimate gradients as in \cite{Khanh2025}. The second variant, denoted
``DFBD + spectral design,'' replaces the finite-difference estimate by
the regression-based estimate described in \Cref{sec:motivation}, with
new directions computed using the Python package
\texttt{spectraldesign}~\cite{Kleywegt2026}. The third variant, denoted
``DFBD + coordinate design,'' also uses the regression-based estimate,
but replaces the spectral-design directions by the first $k$ columns
of the identity matrix. DFBD is a useful baseline because it chooses the
finite-difference radius adaptively; in the regression-based variants,
we use this radius as the design-region radius $\delta$.

The DFBD parameters are $\eta=2$ and $L_{0}=1$, where $L_{0}$ is the
initial Lipschitz estimate. The bidirectional search index is capped at
$30$, and the noise level is set to $\sigma$.
In both regression-based variants, we form a reused-direction matrix
$\bm U$ whose columns are the normalized displacements of reusable
previous evaluations from the current incumbent, as described in
\Cref{sec:motivation}, with reuse radius $r=100$. The corresponding
prior information matrix used by the spectral-design procedure is
$\bm A=\bm U\bm U^{\top}$.
At each iteration, the number of new design
directions is computed as
\[
k =
\max\left\{
1,
\left\lfloor \frac{d}{2} \right\rfloor,
d-\operatorname{rank}(\bm U)
\right\}.
\]
The prior information matrix is updated whenever a regression-based gradient estimate is computed.

\paragraph{Data Profiles.}
We use data profiles to summarize performance over all test instances.
Fix one
such instance, and let $d$ be its dimension. For method $m$, let
$h_m(t)$ be the best true objective value found by method $m$ after
$t$ oracle calls. The maximum budget is $50(d+1)$ oracle calls.

Let $g(\bm{x}_0)$ denote the true objective value at the starting point
provided by the test set, and let $g^\star$ denote the best final true objective value obtained by any method on this same instance. Following
\cite{More2009}, we say that method $m$ solves the instance at
accuracy $\tau \in (0,1)$ within $t$ oracle calls if
\begin{align}
\label{eq:convergence-test}
h_m(t)
\le
\tau g(\bm{x}_0) + (1-\tau) g^\star .
\end{align}
Thus, the target value lies between the initial objective value and the
best final value found by the methods being compared.
For each normalized budget $\alpha$, the data profile reports the
fraction of test instances solved within $\alpha(d+1)$ oracle calls.
Runs that do not satisfy \eqref{eq:convergence-test} within the budget
are counted as unsolved. Higher curves, therefore, indicate better
performance.
All algorithmic decisions, including gradient estimation and
bidirectional-search decisions, use only noisy oracle values. The true
objective values are used only to compute $h_m(t)$ and the data
profiles.

\paragraph{Numerical Results.}

\Cref{fig:data-sigma-1e-2} reports data profiles for three noise levels, $\sigma\in\{10^{-1},10^{-2},10^{-3}\}$, and two accuracy levels, $\tau\in\{10^{-1},10^{-2}\}$. The horizontal axis is the number of oracle calls normalized by $d+1$, and the vertical axis is the fraction of problem--seed pairs satisfying the prescribed accuracy criterion. The plots compare three variants of DFBD: the finite-difference baseline, a regression-based variant using coordinate designs, and a regression-based variant using spectral designs.

Across these illustrative experiments, the two regression-based variants generally outperform the forward finite-difference baseline, especially when higher accuracy is required. The spectral-design variant is competitive with the coordinate-design variant across all six profiles and outperforms it in most cases. These experiments are intended as a proof of concept rather than a comprehensive benchmark against state-of-the-art DFO solvers. Nevertheless, the results suggest that spectral designs offer a promising approach to selecting sampling directions for regression-based DFO when function evaluations are noisy.

\begin{figure}[t]
\centering

\begin{subfigure}[t]{0.44\linewidth}
\centering
\includegraphics[width=\linewidth]{
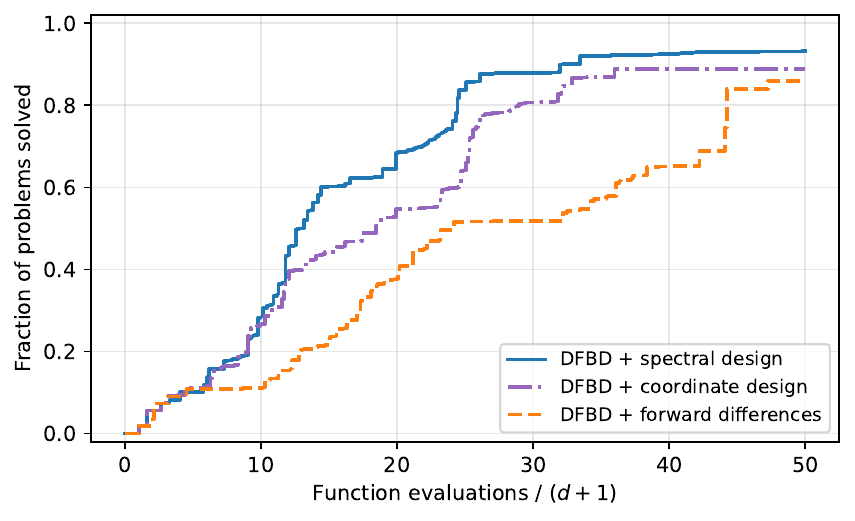}
\caption{$\sigma=10^{-1}$, and $\tau=10^{-1}$.}
\label{fig:data-sigma-1e-2-tau-1e-4}
\end{subfigure}
\hfill
\begin{subfigure}[t]{0.44\linewidth}
\centering
\includegraphics[width=\linewidth]{
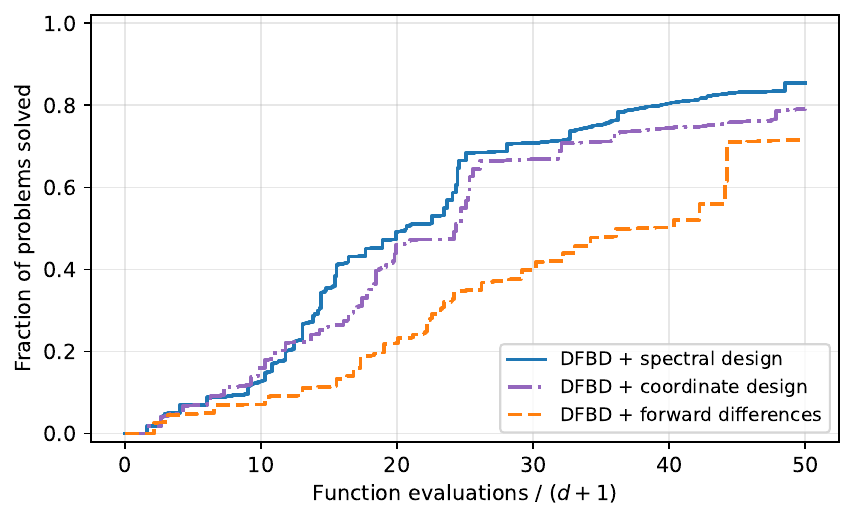}
\caption{$\sigma=10^{-1}$, and $\tau=10^{-2}$.}
\label{fig:data-sigma-1e-2-tau-1e-5}
\end{subfigure}

\vspace{0.5em}

\begin{subfigure}[t]{0.44\linewidth}
\centering
\includegraphics[width=\linewidth]{
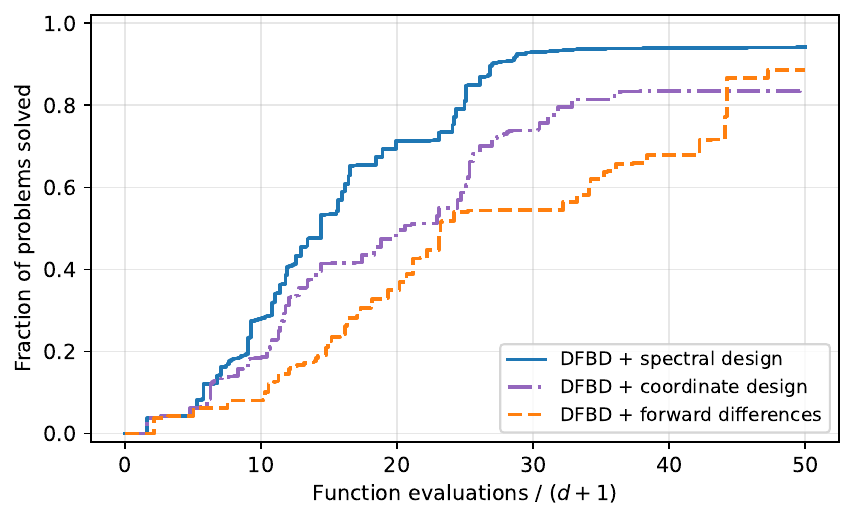}
\caption{$\sigma=10^{-2}$, and $\tau=10^{-1}$.}
\label{fig:data-sigma-1e-2-tau-1e-2}
\end{subfigure}
\hfill
\begin{subfigure}[t]{0.44\linewidth}
\centering
\includegraphics[width=\linewidth]{
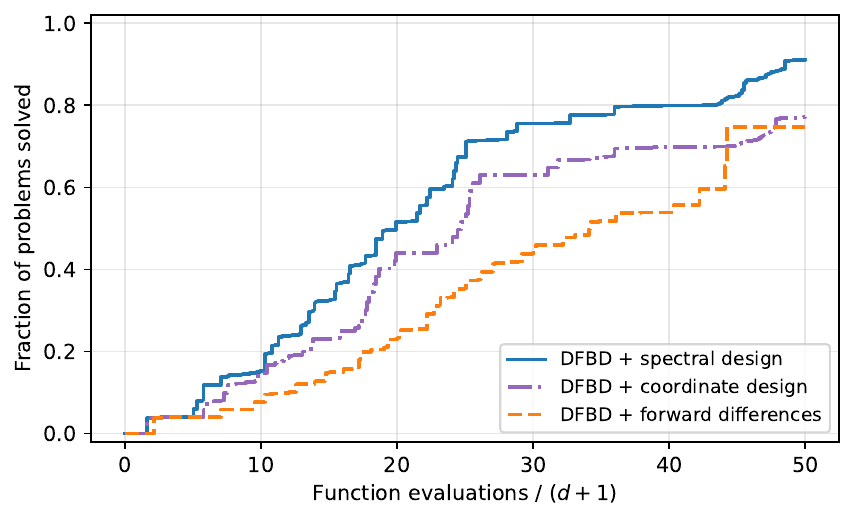}
\caption{$\sigma=10^{-2}$, and $\tau=10^{-2}$.}
\label{fig:data-sigma-1e-2-tau-1e-3}
\end{subfigure}

\vspace{0.5em}

\begin{subfigure}[t]{0.44\linewidth}
\centering
\includegraphics[width=\linewidth]{
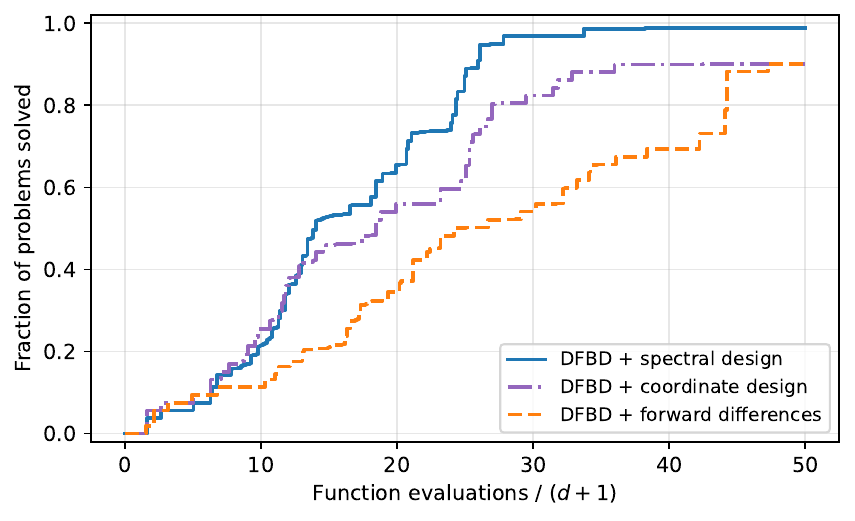}
\caption{$\sigma=10^{-3}$, and $\tau=10^{-1}$.}
\label{fig:data-sigma-1e-2-tau-1e-6}
\end{subfigure}
\hfill
\begin{subfigure}[t]{0.44\linewidth}
\centering
\includegraphics[width=\linewidth]{
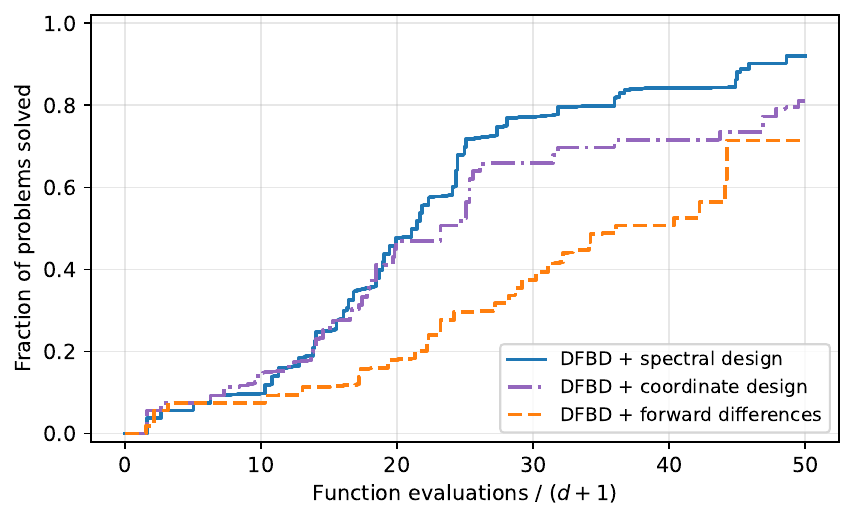}
\caption{$\sigma=10^{-3}$, and $\tau=10^{-2}$.}
\label{fig:data-sigma-1e-2-tau-1e-7}
\end{subfigure}

\caption{Data profiles  for several
noise levels $\sigma$ and accuracy levels $\tau$. The horizontal axis is
the normalized number of oracle calls, measured in multiples of $d+1$,
and the vertical axis is the fraction of problems satisfying
\eqref{eq:convergence-test}.}
\label{fig:data-sigma-1e-2}
\end{figure}

\section{Conclusion}

We studied a nonconvex spectral design problem motivated by optimal experimental design and DFO\@. Despite the nonconvexity of the matrix factorization inherent in the update structure, we showed that the problem can be solved optimally in polynomial time. A key insight is that, for non-increasing convex spectral criteria, optimality depends only on the eigenvalue structure of the prior matrix and the number of design vectors. Our analysis is based on a tight eigenvalue relaxation that admits a one-dimensional water-filling characterization. By combining majorization arguments with the Schur--Horn theorem, we constructed a feasible matrix factorization that attains the optimal relaxation value. The numerical illustrations suggest that the proposed designs can be
useful within DFO, particularly in noisy settings where stable gradient estimation is important. Future research directions include extending the framework to accommodate both first- and second-order objective function models, and studying whether a similar result persists in those settings.

\subsection*{Reproducibility of Numerical Illustrations}

The code and computational output used to generate the spectral-design
illustrations in \Cref{subsect:optimal-design-illustrations} are
archived at \url{https://doi.org/10.5281/zenodo.20347229}. The code and
computational output used for the DFO illustrations in
\Cref{subsect:dfo} are archived at \url{https://doi.org/10.5281/zenodo.20347568}.

\bibliography{main.bbl}

\end{document}